 \newtheorem{thm}{Theorem}[section]
 \newtheorem{cor}[thm]{Corollary}
 \newtheorem{lem}[thm]{Lemma}
 \newtheorem{prop}[thm]{Proposition}
 \theoremstyle{definition}
 \newtheorem{defn}[thm]{Definition}
 \theoremstyle{remark}
 \newtheorem{rem}[thm]{Remark}
 \newtheorem{examp}{Example}
 \numberwithin{equation}{section}
\begin{document}

%
%
%
%
%
%
%
%
%

\title[]
 {Sub-diffusion processes in Hilbert space and their associated stochastic differential equations and Fokker-Planck-Kolmogorov equations}
\author{Lise Chlebak}
\address{
Tufts University, Department of Mathematics,\\ 
503 Boston Avenue, Medford, MA 02155, USA.}

\author{Patricia Garmirian}
\address{
Tufts University, Department of Mathematics,\\ 
	503 Boston Avenue, Medford, MA 02155, USA.}
\email{Patricia.Garmirian@tufts.edu}

\author{Qiong Wu}
\address{
Tufts University, Department of Mathematics,\\ 
	503 Boston Avenue, Medford, MA 02155, USA.}
\email{wuqionghit@gmail.com}




\begin{abstract}
	This paper focuses on the time-changed Q-Wiener process, a Hilbert space-valued sub-diffusion. It is a martingale with respect to an appropriate filtration, hence a stochastic integral with respect to it is definable. For the resulting integral, two change of variables formulas are derived.  Via a duality theorem for integrals, existence and uniqueness theorems for stochastic differential equations (SDEs) driven by the time-changed Q-Wiener process are discussed.  Associated fractional Fokker-Planck-Kolmogorov equations are derived using either a time-changed It\^o formula or duality.   Connections are established between three integrals driven by time-changed versions of the Q-Wiener process, cylindrical Wiener process, and martingale measure. 
\end{abstract}
\keywords{inverse stable subordinator, time-changed $Q$-Wiener processes, Hilbert space-valued sub-diffusion processes, time-changed stochastic differential equations (SDEs) in Hilbert space, fractional Fokker-Planck-Kolmogorov (FPK) equations}
\maketitle
\section{Introduction}
The $Q$-Wiener process is a stochastic process with values in a separable Hilbert space which is usually infinite dimensional. Analogous to a classic Brownian motion in finite dimensional space, the $Q$-Wiener process is a Gaussian diffusion process which has independent and stationary increments and a covariance operator $Q$~\cite{Prevot2007,Koyacs2008,Curtain1971}. Stochastic integrals in Hilbert space with respect to the $Q$-Wiener process are constructed as an infinite sum of real-valued stochastic integrals with respect to infinitely many independent standard Brownian motions \cite{Prato2014,Mandrecar2010,Pardoux2007,Karczewska2005}. A Hilbert space of functions is a natural setting for the semigroup approach to deal with stochastic partial differential equations (SPDEs) since the solutions to these equations are typically elements of such a space~\cite{Ichikawa1982, Ichikawa1984,Prato1996}. For motivation and details on the semigroup approach, see~\cite{Pazy2012,Butzer2013}. From the monographs~\cite{Prevot2007,Prato2014,Mandrecar2010}, stochastic differential equations (SDEs) driven by the $Q$-Wiener process in Hilbert space are expressed as
\begin{eqnarray}\label{SDEinHilbert}
\left\{
\begin{array}{ll}
dX(t) = [AX(t) + F(t, X(t))]\mathrm{d}t + C\mathrm{d}W_{t} \\
\\
X(0) = x_{0}\in{H},\\
\end{array} 
\right.
\end{eqnarray}
where the operator, $A$, is typically an elliptic differential operator and $W_{t}$ is the $Q$-Wiener process. This type of SDE usually corresponds to stochastic partial differential equations (SPDEs) and the solution, $X(t)$, of the SDE~\eqref{SDEinHilbert} is also a diffusion process in Hilbert space. Semigroups and distributions at a fixed time associated with the diffusion processes in Hilbert spaces are studied in \cite{Manita2015} and \cite{Leha1984}. Furthermore, deterministic Fokker-Planck-Kolmogorov (FPK) equations  corresponding to the diffusion process, $X(t)$, in Hilbert space together with the existence and uniqueness of their solutions have been investigated in \cite{Bogachev2009,Bogachev2010,Bogachev2011}. Specifically, Bogachev and others have started the study of FPK equations on Hilbert space in~\cite{Bogachev2009,Bogachev2011} and~\cite{Bogachev2010} continues this study by proving existence and uniqueness results for irregular, even non-continuous drift coefficients.

Sub-diffusion processes in a finite dimensional space are investigated by Meerschaert and Scheffler \cite{Meerschaert2004,Meerschaert2008} and Magdziarz \cite{Magdziarz2010}. In particular, the sub-diffusion processes arising as the scaling limit of continuous-time random walks are considered in \cite{Meerschaert2004,Meerschaert2008}. The sample path properties of this type of process are investigated by the martingale approach in \cite{Magdziarz2010}. It is known that a Brownian motion with an embedded time-change which is the first hitting time process of a stable subordinator of index between $0$ and $1$ is a sub-diffusion process~\cite{Meerschaert2013}. Since the stable subordinator process is an increasing L\'evy process with jumps, there are intervals for which the inverse process is constant. Therefore, a time-changed Brownian motion process often models phenomena for which there are intervals where the process does not change~\cite{Hurd2009,Kawai2010,Umarov2014}. The transition probabilities, i.e., the densities of the time-changed Brownian motion, satisfy a time-fractional FPK equation appearing in~\cite{Hahn2012}. Furthermore, the stochastic calculus for a time-changed semimartingale and the associated SDEs driven by the time-changed semimartingale are investigated in \cite{Kei2011}. See references~\cite{Hahn201156, Hahn2011150, Hahn2011139} as well as the forthcoming book in~\cite{Hahn2016} for further discussion and generalizations of the topics mentioned earlier in this paragraph. 

Corresponding sub-diffusion processes in Hilbert space, specifically, the time-changed Q -Wiener processes have not been investigated. Also the SDEs driven by a time-changed Q -Wiener process and their associated fractional FPK equations have not yet been discussed. In this paper, we focus on the time-changed $Q$-Wiener process and its associated stochastic calculus. Also, SDEs driven by the time-changed $Q$-Wiener process and fractional FPK equations of the solutions to the time-changed SDEs are investigated. Specifically, section~\ref{timechangedQwienerprocess} introduces the concept of a time-changed $Q$-Wiener process which is a sub-diffusion process. Similar to the time-changed Brownian motion, the time-changed $Q$-Wiener process is proven to be a square-integrable martingale in Hilbert space with respect to an appropriate filtration. Furthermore, the increasing process and the quadratic variation process of the time-changed $Q$-Wiener process are explicitly derived.

Section~\ref{stointegralwithtimechangedQwiener} develops the stochastic integral with respect to the time-changed $Q$-Wiener process. First and second change of variable formulas for the time-changed stochastic integral are provided. Also the time-changed It\^o formula for an It\^o process driven by the time-changed $Q$-Wiener process is developed. Moreover, SDEs driven by the time-changed $Q$-Wiener process are introduced.  Based on results from Grecksch and Tudor~\cite{Grecksch1995} for general martingales in Hilbert spaces, the existence and uniqueness of a mild solution to a type of time-changed SDE is provided. From the duality developed between time-changed SDEs and their corresponding non-time-changed SDEs, the existence and uniqueness of strong solutions to the associated time-changed SDEs are established. 

Section~\ref{connectionbetweenWalshandHilbert} discusses a connection between three stochastic integrals, the one driven by a time-changed $Q$-Wiener process in Hilbert space, the one driven by a time-changed cylindrical Wiener process in Hilbert space and the one driven by the martingale measure, known as Walsh's integral~\cite{Walsh1986,Karczewska2005}. This connection allows one to potentially interpret mild solutions to SDEs driven by time-changed $Q$-Wiener processes in Hilbert spaces as random field solutions from the Walsh approach to SPDEs by applying the results of Dalang and Quer-Sardanyons \cite{Dalang2011}.

Section~\ref{fpeinhilbertspace} derives time-fractional FPK equations associated with the solutions to SDEs driven by time-changed $Q$-Wiener processes in two ways. The first method uses the time-changed It\^o formula developed in Section~\ref{stointegralwithtimechangedQwiener}. The same time-fractional FPK equations are next derived by applying the duality theorem also developed in Section~\ref{stointegralwithtimechangedQwiener} and the FPK equations associated with the solution of SDEs driven by the classic $Q$-Wiener process.

\section{The time-changed $Q$-Wiener process}\label{timechangedQwienerprocess} 
After recalling the definition of the $Q$-Wiener process in Hilbert space, we introduce the time-changed $Q$-Wiener process in Hilbert space. The related martingale properties of the time-changed $Q$-Wiener process will also be investigated.
\begin{defn} Following \cite{Mandrecar2010}\label{Qwiener},
Let $Q$ be a nonnegative definite, symmetric, trace-class operator on a separable Hilbert space $K$, let  $\{f_{j}\}_{j=1}^{\infty}$  be an orthonormal basis in $K$ diagonalizing $Q$, and let the corresponding eigenvalues be $\{\lambda_{j}\}_{j=1}^{\infty}$.
Let $\{w_{j}(t)\}_{t\geq 0}, j=1, 2, \cdots,$ be a sequence of independent Brownian motions defined on a filtered probability space $(\Omega, \mathcal{F}, \{\mathcal{F}_{t}\}_{t \geq 0}, \mathbb{P})$. Then the process
	\begin{eqnarray*}
		W_{t} := \sum_{j=1}^{\infty}\lambda_{j}^{1/2}w_{j}(t)f_{j}
	\end{eqnarray*}
	is called a $Q$-Wiener process in $K$.
\end{defn}
\noindent For more details of properties of $Q$-Wiener processes, see \cite{Prato2014, Mandrecar2010}.

Before introducing the time-changed $Q$-Wiener processes, we need to introduce the time-change process which is used throughout this paper. The time-change applied in this paper is the first hitting time process of a $\beta$-stable subordinator defined as 
\begin{equation}\label{inversesubordinator}
E_{t} := E_{\beta}(t) = \inf\{\tau >0: U_{\beta}(\tau) > t\},
\end{equation}
where $U_{\beta}(t)$ is the $\beta$-stable subordinator which has index $\beta\in(0,1)$ and Laplace transform
\begin{eqnarray}\label{laplacetransformofsubordinator}
\mathbb{E}(e^{-uU_{\beta}(\tau)}) = e^{-\tau u^{\beta}}.
\end{eqnarray}
Note that $E_{t}$ is also called the inverse $\beta$-stable subordinator. Let $B(t)$ denote a one-dimensional standard Brownian motion. Consider $Z_{\beta}(t):= B(E_{t})$, a subordinated Brownian motion which has been time-changed by $E_{t}$. The following result of Magdziarz \cite{Magdziarz2010} shows that $Z_{\beta}(t)$ is a square integrable martingale with respect to the appropriate right-continuous filtration,
\begin{equation}\label{filtrationbytimechange}
\bar{\mathcal{F}}_{t} = \bigcap_{u>t}\{\sigma[B(s): 0\leq s \leq u]\vee\sigma[E_{s}: s\geq 0]\},
\end{equation}
where $\bar{\mathcal{F}}_0$ is assumed to be complete.
\begin{thm} Magdziarz, \cite{Magdziarz2010}\label{timechangedBrownianmotion}
The time-changed Brownian motion $Z_{\beta}(t)$ is a mean zero and square integrable martingale with respect to the filtration $\{\bar{\mathcal{F}}_{E_t}\}_{t\geq 0}$. The quadratic variation process of $Z_{\beta}(t)$ is $\langle Z_{\beta}(t), Z_{\beta}(t)\rangle = E_{t}$.
\end{thm}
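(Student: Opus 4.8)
The plan is to derive every assertion from Doob's optional sampling theorem, applied to the underlying Brownian motion $B$ at the random times $E_t$. The two structural facts that make this work are that $B$ remains a martingale after the filtration is enlarged by the entire time-change information, and that each $E_t$ is a genuine stopping time for the enlarged filtration. Once these are in place, the martingale property, the moment bounds, and the quadratic variation all fall out of the same sampling and truncation scheme.

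First I would check that $\{B(t)\}_{t\geq 0}$ is still a martingale with respect to $\{\bar{\mathcal{F}}_t\}_{t\geq 0}$. Since $B$ is independent of the $\beta$-stable subordinator, hence of $E$, the increment $B(t)-B(s)$ is independent of $\sigma[B(r):r\leq s]\vee\sigma[E_r:r\geq 0]$ for $s<t$, so that $\mathbb{E}[B(t)-B(s)\mid\bar{\mathcal{F}}_s]=\mathbb{E}[B(t)-B(s)]=0$; the right-continuity of the filtration causes no difficulty because, by continuity of Brownian paths, $\bar{\mathcal{F}}_s$ coincides up to null sets with $\sigma[B(r):r\leq s]\vee\sigma[E_r:r\geq 0]$. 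Expanding $(B(t)-B(s)+B(s))^2$ and using the same independence shows that $B(t)^2-t$ is also a $\{\bar{\mathcal{F}}_t\}$-martingale. Next, because the whole path $\{E_s:s\geq 0\}$ is $\bar{\mathcal{F}}_0$-measurable and $\bar{\mathcal{F}}_0\subseteq\bar{\mathcal{F}}_u$, the event $\{E_t\leq u\}$ lies in $\bar{\mathcal{F}}_u$ for every $u$, so $E_t$ is a stopping time; moreover $E$ is non-decreasing, so $E_s\leq E_t$ whenever $s\leq t$.

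With these facts I would apply optional sampling to the ordered pair $E_s\leq E_t$ to obtain $\mathbb{E}[B(E_t)\mid\bar{\mathcal{F}}_{E_s}]=B(E_s)$, which is precisely the martingale property of $Z_\beta$ with respect to $\{\bar{\mathcal{F}}_{E_t}\}_{t\geq 0}$. The main obstacle is that $E_t$ is almost surely finite but not bounded, so the elementary bounded-stopping-time form of optional sampling does not apply directly. I would handle this by truncating, working first with the bounded stopping times $E_t\wedge n$ and then letting $n\to\infty$; since $E_s\wedge n\uparrow E_s$, the conditioning $\sigma$-algebras increase to $\bar{\mathcal{F}}_{E_s}$, and the passage to the limit is justified by uniform integrability, which follows from the $L^2$ bound $\mathbb{E}[B(E_t\wedge n)^2]=\mathbb{E}[E_t\wedge n]\leq\mathbb{E}[E_t]$, the last quantity being finite because $\mathbb{E}[E_t]=t^\beta/\Gamma(1+\beta)<\infty$ is a standard moment estimate for the inverse $\beta$-stable subordinator.

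The remaining assertions follow quickly. Conditioning on the $\bar{\mathcal{F}}_0$-measurable variable $E_t$ and using independence, together with $\mathbb{E}[B(u)]=0$ and $\mathbb{E}[B(u)^2]=u$ for each fixed $u$, gives $\mathbb{E}[Z_\beta(t)]=0$ and $\mathbb{E}[Z_\beta(t)^2]=\mathbb{E}[E_t]<\infty$, so $Z_\beta$ is mean zero and square integrable. Finally, to identify the quadratic variation I would run the same optional-sampling and truncation argument on the $\{\bar{\mathcal{F}}_t\}$-martingale $B(t)^2-t$, concluding that $B(E_t)^2-E_t=Z_\beta(t)^2-E_t$ is a $\{\bar{\mathcal{F}}_{E_t}\}$-martingale, with the requisite integrability again coming from the finiteness of the moments of $E_t$. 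Since $t\mapsto E_t$ is continuous, non-decreasing, adapted, and vanishes at $0$, and $Z_\beta$ inherits continuous paths from $B$ and $E$, uniqueness in the Doob-Meyer decomposition of the continuous martingale $Z_\beta^2$ forces $\langle Z_\beta(t),Z_\beta(t)\rangle=E_t$.
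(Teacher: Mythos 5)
Your argument is correct and follows essentially the same route the paper takes: the theorem itself is only cited from Magdziarz, but the paper's proof of its Hilbert-space analogue (Theorem \ref{martingaleoftimechagedQwiener}) likewise rests on Doob's optional sampling applied to the underlying martingale at the ordered stopping times $E_s\leq E_t$, a truncation to make optional sampling legitimate, and a passage to the limit controlled by the finite moments of $E_t$, with Doob--Meyer uniqueness identifying the quadratic variation. The only real difference is the truncation device — you bound the stopping times via $E_t\wedge n$ and invoke uniform integrability from the $L^2$ bound $\mathbb{E}[B(E_t\wedge n)^2]=\mathbb{E}[E_t\wedge n]$, whereas the paper stops the martingale at $T_n=\inf\{\tau:|X(\tau)|\geq n\}$ and passes to the limit via Doob's maximal inequality and dominated convergence — and your version has the small virtue of making explicit the two facts the paper leaves implicit, namely that $B$ remains a martingale for the enlarged filtration and that each $E_t$ is a stopping time for it.
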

The time-changed Brownian motion, $Z_{\beta}(t)$, is a sub-diffusion process. By incorporating the time-change, $E_{t}$, into the independent Brownian motions in Definition~\ref{Qwiener}, we define a Hilbert space-valued time-changed $Q$-Wiener process as follows: 
\begin{defn}\label{timechangedqwiener}
	With $Q$, $\{f_{j}\}_{j=1}^{\infty}$, and $\{\lambda_{j}\}_{j=1}^{\infty}$ as defined in \ref{Qwiener}, let $\{w_{j}(t)\}_{t\geq 0}, j=1, 2, \cdots,$ be a sequence of independent Brownian motions defined on $(\Omega, \mathcal{F}, \{\mathcal{F}_{t}\}_{t \geq 0}, \mathbb{P})$ which are independent of $E_t$. Then the process
\begin{eqnarray}\label{defoftimechangedQwiener}
	W_{E_{t}} := \sum_{j=1}^{\infty}\lambda_{j}^{1/2}w_{j}(E_{t})f_{j}
\end{eqnarray}
is called a time-changed $Q$-Wiener process in $K$.
\end{defn}
This time-changed $Q$-Wiener process can be considered as an infinite-dimensional Hilbert space analog of a one-dimensional time-changed Brownian motion. It is a sub-diffusion process in Hilbert space. Let $\mu_{t}$ be the Borel probability measure induced by the time-changed $Q$-Wiener process, $W_{E_{t}}$ on $K$, i.e., $\mathbb{E}(W_{E_{t}}) = \int_{K}x\mu_{t}(\mathrm{d}x)$. Then the time-fractional FPK equation corresponding to the time-changed $Q$-Wiener process in the following theorem can be considered as a special case of Theorem~\ref{difffracfpe} in Section~\ref{fpeinhilbertspace}.
\begin{thm}\label{sub-diffusionoftimechangedQwienerprocess}
Suppose $\mu_{t}$ is the probability measure induced by the time-changed $Q$-Wiener process, $W_{E_{}}$, on $K$. Then $\mu_{t}$ satisfies the following time-fractional PDE
\begin{eqnarray*}
	D_{t}^{\beta}\mu_{t} = D_{x}^{2}\mu_{t},
\end{eqnarray*}
where $D_{x}^{2}$ denotes the second-order Fr\'echet derivative in space and $D_{t}^{\beta}$ is the Caputo time fractional derivative operator defined as
\begin{eqnarray*}
	D^{\beta}_{t}f(t) = \frac{1}{\Gamma(1 - \beta)}\int_{0}^{t}\frac{f^{\prime}(\tau)}{(t - \tau)^{\beta}}\mathrm{d}\tau,
\end{eqnarray*}
where $\beta$ is the index associated with the $\beta$-stable subordinator $U_{\beta}(t)$ and $\Gamma(\beta)$ is the gamma function.
\end{thm}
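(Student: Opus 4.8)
The plan is to exploit the subordination structure of $W_{E_t}$. Because the Brownian motions $w_j$ are independent of the inverse $\beta$-stable subordinator $E_t$, conditioning on the event $\{E_t = s\}$ shows that the law $\mu_t$ of $W_{E_t}$ is the mixture
\begin{equation*}
\mu_t = \int_0^\infty \nu_s\, g(s,t)\,\mathrm{d}s,
\end{equation*}
where $\nu_s$ denotes the Borel law on $K$ of the ordinary $Q$-Wiener process $W_s$ of Definition~\ref{Qwiener}, and $g(\cdot, t)$ is the density of $E_t$. First I would record the governing equation of the untime-changed object: the Gaussian measure $\nu_s$ solves the classical, first-order-in-time FPK (heat) equation $\partial_s\nu_s = D_x^2\nu_s$ with $\nu_0 = \delta_0$, where $D_x^2$ is the generator $\tfrac12\mathrm{Tr}(QD^2\cdot)$ of the $Q$-Wiener semigroup written in weak (adjoint) form. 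Since $E_0 = 0$, the initial datum matches, $\mu_0 = \nu_0 = \delta_0$. As $\nu_s$ carries no density on the infinite-dimensional space $K$, both this identity and the conclusion of the theorem are read weakly, that is, after pairing with smooth cylindrical test functions $\varphi$, so that one really works with the scalars $\int_K \varphi\,\mathrm{d}\nu_s$; on each finite-dimensional cylinder this reduces to the finite-dimensional heat equation, and the trace-class property of $Q$ controls the resulting series over the eigenvalues $\{\lambda_j\}$.

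The main step is to push this first-order equation through the subordinating kernel $g$ by taking the Laplace transform in $t$. I would invoke the standard identity for the inverse $\beta$-stable subordinator, which follows from~\eqref{laplacetransformofsubordinator},
\begin{equation*}
\int_0^\infty e^{-ut} g(s,t)\,\mathrm{d}t = u^{\beta - 1} e^{-s u^{\beta}}.
\end{equation*}
Writing $\tilde\mu(u) = \int_0^\infty e^{-ut}\mu_t\,\mathrm{d}t$ and interchanging the order of integration yields $\tilde\mu(u) = u^{\beta-1}\int_0^\infty e^{-su^\beta}\nu_s\,\mathrm{d}s$. Applying $D_x^2$, substituting $D_x^2\nu_s = \partial_s\nu_s$, and integrating by parts once in $s$ (the boundary term at $s = \infty$ vanishes while the term at $s = 0$ contributes $-\nu_0$) gives
\begin{equation*}
D_x^2\tilde\mu(u) = u^{\beta}\tilde\mu(u) - u^{\beta - 1}\nu_0.
\end{equation*}
The right-hand side is exactly the Laplace transform of the Caputo derivative, since $\mathcal{L}[D_t^\beta\mu_t](u) = u^\beta\tilde\mu(u) - u^{\beta-1}\mu_0$ and $\mu_0 = \nu_0$. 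Because $D_x^2$ commutes with the transform in $t$, inverting then yields $D_t^\beta\mu_t = D_x^2\mu_t$, as claimed.

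The clean Laplace-transform computation is essentially one-dimensional; the genuine difficulty is to make each manipulation legitimate in the measure-valued Hilbert-space setting. Three points need care: (i) giving $D_x^2\nu_s = \partial_s\nu_s$ a rigorous weak meaning through Fréchet derivatives of measures and verifying it for the $Q$-Wiener law, which rests on the finite-dimensional heat equation together with summability of $\{\lambda_j\}$; (ii) justifying the interchange of the spatial operator $D_x^2$ with the $s$-integral and the integration by parts, that is, controlling the decay of $e^{-su^\beta}\nu_s$ as $s\to\infty$ and its integrability as $s\to 0^+$ after testing against $\varphi$; and (iii) the validity of the Laplace inversion returning from $\tilde\mu$ to $\mu_t$. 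I expect (i) and (ii) to be where the real work lies; once the evolution equation for $\nu_s$ and the differentiation under the integral are secured, the fractional equation for $\mu_t$ falls out of the subordination identity.
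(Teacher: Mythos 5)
Your argument is correct and is essentially the route the paper takes: the paper states this theorem as a special case of Theorem~\ref{difffracfpe}, whose proof (see Theorem~\ref{fpeofduality}) rests on exactly your ingredients --- the subordination identity $\mu_t = \int_0^\infty \nu_\tau f_{E_t}(\tau)\,\mathrm{d}\tau$, the Laplace-transform identity for the density of $E_t$ (Lemma~\ref{laplaceoftimechange}), the classical FPK equation for the non-time-changed process, and the identification of $u^\beta\tilde\mu(u)-u^{\beta-1}\mu_0$ with the Laplace transform of the Caputo derivative. Your explicit integration by parts in $s$ is algebraically equivalent to the paper's step of Laplace-transforming the first-order FPK equation and substituting $s\mapsto s^\beta$, so the two derivations coincide in substance.
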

The time-changed $Q$-Wiener process is our main object of study. In order to define integrals with respect to this process, and, ultimately, to consider SDEs driven by this process, it is advantageous to view the time-changed $Q$-Wiener process as a martingale with respect to an appropriate filtration. To prove this, we begin with the definition of a martingale in a Hilbert space.

\begin{defn}\label{H-valuedmartingale}Following \cite{Mandrecar2010},
let $K$ be a separable Hilbert space endowed with its Borel $\sigma$-field $\mathcal{B}(K)$. Fix $T>0$ and let $(\Omega, \mathcal{F}, \{\mathcal{F}_{t}\}_{t\leq T}, \mathbb{P})$ be a filtered probability space and $\{M_{t}\}_{t\leq T}$ be a $K$-valued process adapted to the filtration $\{\mathcal{F}_{t}\}_{t\leq T}$. Assume that $M_{t}$ is integrable, i.e., $\mathbb E\|M_{t}\| < \infty$. 
\begin{itemize}
	\item[a)] If for any $0\leq s \leq t$, $\mathbb E(M_{t}|\mathcal{F}_{s}) = M_{s},~\mathbb{P}$-a.s., then $M_{t}$ is called an $\mathcal{F}_t$-martingale.
	\item[b)] If $\mathbb{E}\|M_{T}\|^{2} < \infty$, the martingale $M_{t}$ is called square integrable on~$0 \leq t \leq T$.
\end{itemize}
\end{defn}

The condition for $\{M_{t}\}_{t < T}$ to be a martingale in Hilbert space is equivalent to the following: for all $s < t$ and $h\in K$,
\begin{eqnarray*}
	\mathbb{E}(\langle M_{t}, h\rangle|\mathcal{F}_{s}) = \langle M_{s}, h\rangle,~\mathbb{P}-a.s.
\end{eqnarray*}
Let $\mathcal{M}^2_{T}(K)$ denote the collection of all continuously square integrable martingales in the Hilbert space $K$.

\begin{examp}
	The $Q$-Wiener process $W_{t} := \sum_{j=1}^{\infty}\lambda_{j}^{1/2}w_{j}(t)f_{j}$ introduced in Definition~\ref{Qwiener} is a square integrable martingale with respect to the filtration $\mathcal{F}_{t}$ generated by independent Brownian motions, i.e., $\sigma(w_{j}(s): 0 \leq s \leq t, j = 1, 2, \cdots)$.
\end{examp}

Define 
 \begin{equation}\label{generalizationoffiltration}
 \mathcal{\tilde{F}}_{t} = \bigcap_{u>t}\{\sigma[w_j(s): 0\leq s \leq u,j=1,2,...]\vee\sigma[E_{s}: s\geq 0]\},
 \end{equation}
where $\tilde{\mathcal{F}}_0$ is assumed to be complete, and $\{w_{j}(t)\}_{t\geq 0}, j=1, 2, \cdots,$ is a sequence of independent Brownian motions which are independent of $E_{t}$.
The following theorem establishes that the time-changed $Q$-Wiener process is also a square-integrable martingale, i.e., $W_{E_t}\in\mathcal{M}^2_{T}(K)$ with respect to the filtration $\mathcal{\tilde{F}}_{E_t}.$

\begin{thm}\label{martingaleoftimechagedQwiener}
	The time-changed $Q$-Wiener process defined by Definition \ref{timechangedqwiener} is a $K$-valued square integrable martingale with respect to the filtration $\mathcal{G}_{t}:=\tilde{\mathcal{F}} _{E_t}$.
\end{thm}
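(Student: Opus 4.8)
The plan is to reduce the Hilbert-space statement to a family of real-valued statements via the inner-product characterization of a $K$-valued martingale recorded just after Definition~\ref{H-valuedmartingale}: it suffices to prove that $W_{E_t}$ is $\mathcal{G}_t$-adapted, that $\mathbb{E}\|W_{E_t}\|^2<\infty$, and that for every $h\in K$ the real-valued process $\langle W_{E_t},h\rangle$ is a $\mathcal{G}_t$-martingale. Adaptedness is immediate from the construction of $\mathcal{G}_t=\tilde{\mathcal{F}}_{E_t}$: the entire path $\{E_s\}_{s\ge0}$ lies in $\tilde{\mathcal{F}}_0$, and $w_j$ sampled up to the random time $E_t$ is $\tilde{\mathcal{F}}_{E_t}$-measurable, so each coordinate $w_j(E_t)$, and hence the series defining $W_{E_t}$, is $\mathcal{G}_t$-measurable.

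For square integrability I would condition on $\mathcal{F}^E:=\sigma[E_s:s\ge0]$. Since the $w_j$ are independent of $E$ and $w_j(E_t)$ given $\mathcal{F}^E$ is centered Gaussian with variance $E_t$, Parseval's identity gives
\[
\mathbb{E}\|W_{E_t}\|^2=\sum_{j=1}^{\infty}\lambda_j\,\mathbb{E}\big[\mathbb{E}(w_j(E_t)^2\mid\mathcal{F}^E)\big]=\Big(\sum_{j=1}^{\infty}\lambda_j\Big)\mathbb{E}[E_t]=\mathrm{tr}(Q)\,\mathbb{E}[E_t].
\]
This is finite because $Q$ is trace-class and the inverse $\beta$-stable subordinator has finite moments (indeed $\mathbb{E}[E_t]=t^{\beta}/\Gamma(1+\beta)<\infty$); in particular $\mathbb{E}\|W_{E_t}\|<\infty$ as well, so the integrability hypothesis of Definition~\ref{H-valuedmartingale} holds.

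For the martingale property, fix $h\in K$ and note that $\langle W_t,h\rangle=\sum_{j}\lambda_j^{1/2}\langle f_j,h\rangle\,w_j(t)$ is, up to the deterministic scale $\langle Qh,h\rangle^{1/2}$, a standard one-dimensional Brownian motion, and it is a continuous $\tilde{\mathcal{F}}_t$-martingale: it is a martingale for its own filtration, and enlarging by the independent information $\mathcal{F}^E$ preserves this. Because the whole path of $E$ is $\tilde{\mathcal{F}}_0$-measurable, each $E_t$ is an $\tilde{\mathcal{F}}_t$-stopping time, and $\langle W_{E_t},h\rangle$ is precisely a scaled time-changed Brownian motion of the type $Z_\beta$ treated in Theorem~\ref{timechangedBrownianmotion}, now relative to $\tilde{\mathcal{F}}_{E_t}=\mathcal{G}_t$. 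Applying that theorem (equivalently, optional sampling for the stopping times $E_s\le E_t$) yields $\mathbb{E}(\langle W_{E_t},h\rangle\mid\mathcal{G}_s)=\langle W_{E_s},h\rangle$ for $s\le t$. Since $h\in K$ was arbitrary, the inner-product criterion gives $W_{E_t}\in\mathcal{M}^2_T(K)$.

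The step I expect to be the main obstacle is the martingale identity with respect to the enlarged, time-changed filtration $\mathcal{G}_t=\tilde{\mathcal{F}}_{E_t}$, which is strictly larger than the one-Brownian-motion filtration $\bar{\mathcal{F}}_{E_t}$ appearing in Theorem~\ref{timechangedBrownianmotion}. The crux is to verify that $\langle W_\cdot,h\rangle$ remains a martingale with respect to $\tilde{\mathcal{F}}_t$—which relies on the mutual independence of the family $\{w_j\}$ and the time-change $E$—and then to rerun the time-change argument for this larger filtration. A secondary technical point is the optional-sampling/time-change step when $E_t$ is almost surely finite but unbounded; this is handled by truncating at $E_t\wedge n$ and invoking uniform integrability as in Magdziarz, or by conditioning on $\mathcal{F}^E$ so that $E_s\le E_t$ become deterministic sampling times of the independent scalar martingale $\langle W_\cdot,h\rangle$.
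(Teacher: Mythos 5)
Your proposal is correct and follows essentially the same route as the paper's proof: reduce to the scalar processes $\langle W_{E_t},h\rangle_K$, compute $\mathbb{E}\|W_{E_t}\|_K^2=\mathrm{tr}(Q)\,\mathbb{E}[E_t]<\infty$ by conditioning on the time change, and obtain the martingale identity from optional sampling at the $\tilde{\mathcal{F}}_0$-measurable times $E_s\le E_t$ after a localization step, passing to the limit via uniform integrability supplied by Doob's maximal inequality. The only cosmetic difference is that the paper localizes by stopping $X$ at $T_n=\inf\{\tau:|X(\tau)|\ge n\}$ rather than truncating the time at $E_t\wedge n$, which changes nothing essential.
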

\begin{proof}
From \cite{Magdziarz2010}, all moments of the time change, $E_{t}$, are finite, i.e.,
\begin{eqnarray*}
\mathbb{E}(E_{t}^n)=\frac{t^{n\beta}n!}{\Gamma(n\beta+1)},
\end{eqnarray*}
for $n=1,2, \cdots$. Let $f_{E_{t}}$ be the density function for $E_t$. Then, the second moment for the time-changed Brownian motion $w_{j}(E_t)$ is given by
\begin{eqnarray*}
	\begin{aligned}
	  \mathbb{E}(w_{j}^2(E_t))=\int_{0}^{\infty}\mathbb{E}(w_{j}^2(\tau))f_{E_t}(\tau)\mathrm{d}\tau=\int_{0}^{\infty}\tau f_{E_t}(\tau)\mathrm{d}\tau =\frac{t^{\beta}}{\Gamma(\beta+1)}.
	\end{aligned}
\end{eqnarray*}
Thus, 
\begin{eqnarray*}
	\begin{aligned}
	  \mathbb{E}\|W_{E_{t}}\|^{2}_{K} &= \mathbb{E}\bigg\langle \sum_{j=1}^{\infty}\lambda^{1/2}_{j}w_{j}(E_{t})f_{j}, \sum_{i=1}^{\infty}\lambda^{1/2}_{i}w_{i}(E_{t})f_{i}\bigg\rangle_{K}=\mathbb E\sum_{j=1}^{\infty}\lambda_{j}w^{2}_{j}(E_{t})\\
	  &=\sum_{j=1}^{\infty}\lambda_{j}\mathbb E(w_{j}^{2}(E_{t}))=\frac{t^{\beta}}{\Gamma(\beta + 1)}\sum_{j=1}^{\infty}\lambda_{j}<\infty.
	\end{aligned}
\end{eqnarray*}
The sum is finite since $Q$ is a trace-class operator. Also, since the $Q$-Wiener process, $W_{t}$, is a square integrable martingale in the Hilbert space $K$, it follows that for any $h\in K$, the process $X_{t}$ defined by 
\begin{eqnarray*}
	X_{t} :=\langle W_{t},h\rangle_K
\end{eqnarray*}
is a real-valued square integrable martingale with respect to the filtration $\mathcal{\tilde{F}}_{t}$. This means that in order to prove the time-changed $Q$-Wiener process, $W_{E_{t}}$, is a square integrable martingale, it suffices to verify that the time-changed real-valued process, $X_{E_{t}}$, defined by
\begin{eqnarray*}
  X_{E_t}:=\langle W_{E_t},h\rangle_K,
\end{eqnarray*}
is a square integrable martingale with respect to the filtration  $\mathcal G_t=\mathcal{\tilde{F}}_{E_t}$. 
Define the sequence of $\{\mathcal{\tilde{F}}_\tau\}$-stopping times, $T_n$, by
\begin{eqnarray*}
	T_n=\inf\{\tau>0:|X(\tau)|\geq n\}.
\end{eqnarray*}
It is known that the stopped process $X(T_n\wedge \tau)$ is a bounded martingale with respect to $\tilde{\mathcal{F}}_{\tau}$. Thus, by Doob's Optional Sampling Theorem, for $s<t$,
\begin{eqnarray}\label{doobsampling}
	\mathbb{E}(X(T_n\wedge E_t)\ |\ \mathcal G_s)=X(T_n\wedge E_s)
\end{eqnarray}
The right hand side of~\eqref{doobsampling} converges to $X(E_s)$ as $n\rightarrow \infty$.
For the left hand side,
\begin{eqnarray*}
	|X(T_n\wedge E_t)|\leq\sup_{0\leq s\leq t}|X(E_s)|.
\end{eqnarray*}
Then, applying Doob's Maximal Inequality yields
\begin{eqnarray*}
	\begin{aligned}
		\mathbb{E}(\sup_{0\leq s \leq t}&|X(E_{s})|^2) = \mathbb{E}(\sup_{0\leq s \leq E_{t}}|X(s)|^2) = \int_{0}^{\infty}\mathbb{E}(\sup_{0\leq s \leq \tau}|X(s)|^2\mid E_{t} = \tau)f_{E_{t}}(\tau)\mathrm{d}\tau\\
		&= \int_{0}^{\infty}\mathbb{E}(\sup_{0\leq s \leq \tau}|X(s)|^2)f_{E_{t}}(\tau)\mathrm{d}\tau\leq 4\int_{0}^{\infty}\mathbb{E}(|X(\tau)|^2)f_{E_{t}}(\tau)\mathrm{d}\tau\\
		&= 4\int_{0}^{\infty}\mathbb{E}(|\langle W_{\tau},h\rangle_K|^2)f_{E_{t}}(\tau)\mathrm{d}\tau \leq  4\int_{0}^{\infty}\mathbb{E}(\|W_{\tau}\|_K^2\|h\|_K^2)f_{E_{t}}(\tau)\mathrm{d}\tau\\
		& =  4\|h\|_K^2\int_{0}^{\infty}\mathbb E \|W_\tau\|^2f_{E_t}(\tau)d\tau  =  4\|h\|_K^2\mathbb E\|W_{E_t}\|^2_K<\infty.\\
	\end{aligned}
\end{eqnarray*}
Also by Holder's inequality,
\begin{eqnarray*}
  \mathbb{E}(\sup_{0\leq s\leq t}|X_{E_{s}}|) \leq (\mathbb{E}(\sup_{0\leq s \leq t}|X_{E_{s}}|)^2)^{1/2}.
\end{eqnarray*} 
Thus,
\begin{eqnarray*}
   \mathbb{E}(\sup_{0\leq s\leq t}|X_{E_{s}}|) < \infty.
\end{eqnarray*}
By the dominated convergence theorem,
\begin{eqnarray*}
	\mathbb{E}(X(T_{n}\wedge E_{t})\mid \mathcal{G}_{s}) \longrightarrow \mathbb{E}(X(E_{t})\mid\mathcal{G}_{s}),~\textrm{as}~n\to\infty.
\end{eqnarray*}
Therefore, from~\eqref{doobsampling},
\begin{eqnarray*}
	\mathbb{E}(X(E_{t})\mid\mathcal{G}_{s}) = X(E_{s}),
\end{eqnarray*}
		which implies that $X(E_{t})=\langle W_{E_t},h \rangle_K$ is a martingale with respect to the filtration $\mathcal{\tilde{F}}_{E_t}$. Therefore, $W_{E_t}$ is a square integrable martingale in the Hilbert space $K$.
\end{proof}
The following definition and lemma concern the increasing processes and the quadratic variation processes of square integrable martingales in a Hilbert space. These will be applied in Proposition~\ref{increasingandquadracticprocess} to the time-changed $Q$-Wiener process $W_{E_t}$.  

\begin{defn} Following \cite{Mandrecar2010}\label{defincreasingandquadraticprocesses},
	let $M_{t}\in\mathcal{M}^2_{T}(K)$. Denote by $\langle M\rangle_{t}$ the unique adapted continuous increasing process starting from 0 such that $\|M_{t}\|^2_{K} - \langle M\rangle_{t}$ is a continuous martingale. The quadratic variation process $\langle\langle M\rangle\rangle_{t}$ of $M_{t}$ is an adapted continuous process starting from 0, with values in the space of nonnegative definite trace-class operators on $K$, such that for all $h, g\in K$,
	$$\langle M_{t}, h\rangle_{K}\langle M_{t}, g\rangle_{K} - \big\langle\langle\langle M\rangle\rangle_{t}(h), g\big\rangle_K$$
	is a martingale.
\end{defn}

\begin{lem} Following \cite{Mandrecar2010}\label{relationbetweenincreasingandquadraticprocess},
	the quadratic variation process of a martingale $M_{t}\in\mathcal{M}_{T}^{2}(K)$ exists and is unique. Moreover, 
	\begin{eqnarray*}
		\langle M\rangle_{t} = tr(\langle\langle M\rangle\rangle_{t}).
	\end{eqnarray*}
\end{lem}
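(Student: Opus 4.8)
The plan is to build the operator-valued quadratic variation $\langle\langle M\rangle\rangle_t$ out of scalar quadratic covariations and then recover the trace identity by summing over an orthonormal basis. First I would fix an orthonormal basis $\{e_k\}_{k=1}^{\infty}$ of $K$ and, for each pair $h,g\in K$, consider the real-valued continuous martingales $\langle M_t,h\rangle_K$ and $\langle M_t,g\rangle_K$, which are square integrable because $M_t\in\mathcal{M}^2_T(K)$. The classical Doob--Meyer decomposition applied to the submartingale $\langle M_t,h\rangle_K^2$, together with polarization, yields for each $(h,g)$ a unique continuous adapted process $V^{h,g}_t$ of bounded variation, starting at $0$, such that $\langle M_t,h\rangle_K\langle M_t,g\rangle_K-V^{h,g}_t$ is a martingale; this $V^{h,g}$ is the scalar quadratic covariation of the two real martingales, and $V^{h,h}$ is increasing.

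Next I would show that for almost every $(t,\omega)$ the map $(h,g)\mapsto V^{h,g}_t$ is a symmetric, nonnegative, bilinear form. Bilinearity and symmetry follow from the corresponding properties of scalar covariation, and the Kunita--Watanabe inequality gives $|V^{h,g}_t|\le (V^{h,h}_t)^{1/2}(V^{g,g}_t)^{1/2}$. I would then \emph{define} the candidate operator $\langle\langle M\rangle\rangle_t$ through its matrix entries $\langle \langle\langle M\rangle\rangle_t e_j,e_k\rangle_K:=V^{e_j,e_k}_t$. By construction it is symmetric, nonnegative definite, adapted, and continuous (inheriting these from the scalar covariations), and it satisfies the defining martingale property of Definition~\ref{defincreasingandquadraticprocesses} for basis vectors, hence, by bilinearity and a limiting argument, for all $h,g\in K$.

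The crux is to establish that $\langle\langle M\rangle\rangle_t$ is genuinely trace-class and that its trace is exactly $\langle M\rangle_t$. For this I would apply Doob--Meyer to the real continuous submartingale $\|M_t\|_K^2$ to obtain the unique continuous increasing process $\langle M\rangle_t$ of Definition~\ref{defincreasingandquadraticprocesses}, and then write $\|M_t\|_K^2=\sum_{k}\langle M_t,e_k\rangle_K^2$. Summing the scalar relations shows that $\|M_t\|_K^2-\sum_k V^{e_k,e_k}_t$ is a martingale, so by uniqueness of the compensator $\langle M\rangle_t=\sum_k V^{e_k,e_k}_t=\operatorname{tr}(\langle\langle M\rangle\rangle_t)$, and the same identity forces $\sum_k V^{e_k,e_k}_t<\infty$, i.e.\ the operator is trace-class.

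The main obstacle is justifying the interchange of the infinite sum with the martingale (conditional expectation) operation in this last step: the partial sums $\sum_{k\le N}\langle M_t,e_k\rangle_K^2$ must be controlled uniformly in $N$. I would handle this using $\mathbb{E}\|M_t\|_K^2<\infty$ together with Doob's maximal inequality to obtain $L^1$ convergence of the partial sums, allowing passage to the limit inside the conditional expectation by dominated convergence; this simultaneously delivers finiteness of the trace and the basis-independence of $\sum_k V^{e_k,e_k}_t$. Uniqueness throughout then follows from the standard fact that a continuous martingale of bounded variation starting from $0$ is indistinguishable from $0$, which pins down both $\langle M\rangle_t$ and every entry $V^{e_j,e_k}_t$, hence the operator $\langle\langle M\rangle\rangle_t$ itself.
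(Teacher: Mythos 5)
Your proposal is correct, but note that the paper does not prove this lemma at all: it is stated as a quoted result from \cite{Mandrecar2010}, so there is no in-paper argument to compare against. Your construction --- scalar Doob--Meyer plus polarization for each pair $(h,g)$, the Kunita--Watanabe bound to control the off-diagonal entries, definition of $\langle\langle M\rangle\rangle_t$ through its matrix, and recovery of $\langle M\rangle_t=\operatorname{tr}(\langle\langle M\rangle\rangle_t)$ by summing the diagonal relations and invoking uniqueness of the compensator of $\|M_t\|_K^2$ --- is precisely the standard proof in that reference (and in Da Prato--Zabczyk), and the one point you flag as delicate, the $L^1$ passage to the limit in $N$ inside the conditional expectation, is handled correctly by monotone/dominated convergence given $\mathbb{E}\|M_t\|_K^2<\infty$.
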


\begin{prop}\label{increasingandquadracticprocess}
	The increasing process and quadratic variation process of the time-changed $Q$-Wiener process in Definition~\ref{timechangedqwiener} are respectively
	\begin{eqnarray*}
		\langle W_{E}\rangle_{t} = tr(Q)E_{t}~\textrm{and}~\langle\langle W_{E}\rangle\rangle_{t} = QE_{t}.
	\end{eqnarray*}
\end{prop}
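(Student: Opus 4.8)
The plan is to prove the two formulas for $\langle W_E\rangle_t$ and $\langle\langle W_E\rangle\rangle_t$ by directly verifying the defining martingale properties from Definition~\ref{defincreasingandquadraticprocesses}, using the known fact (Theorem~\ref{martingaleoftimechagedQwiener}) that $W_{E_t}\in\mathcal{M}^2_T(K)$ together with the moment computation $\mathbb{E}\|W_{E_t}\|_K^2 = tr(Q)\,\mathbb{E}(E_t)$ already established in that proof. By Lemma~\ref{relationbetweenincreasingandquadraticprocess}, once I identify the operator-valued quadratic variation $\langle\langle W_E\rangle\rangle_t = QE_t$, the scalar increasing process follows immediately as $\langle W_E\rangle_t = tr(QE_t) = tr(Q)E_t$, since $E_t$ is a nonnegative scalar multiplying the trace-class operator $Q$. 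So the crux is establishing the operator formula.

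First I would verify the increasing process $\langle W_E\rangle_t = tr(Q)E_t$ directly, as a warm-up and because it is the cleaner computation. By definition I must show $\|W_{E_t}\|_K^2 - tr(Q)E_t$ is a $\mathcal{G}_t$-martingale. Writing $\|W_{E_t}\|_K^2 = \sum_{j=1}^\infty \lambda_j w_j^2(E_t)$, I would use that each $w_j^2(t) - t$ is a martingale with respect to the untimed filtration $\tilde{\mathcal F}_t$, so that $\|W_t\|_K^2 - tr(Q)\,t = \sum_j \lambda_j(w_j^2(t)-t)$ is an $\tilde{\mathcal F}_t$-martingale (the interchange of sum and conditional expectation being justified by the trace-class summability and the moment bounds). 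The key structural point is that when a continuous $\tilde{\mathcal F}_t$-martingale $N_t$ is composed with the time change $E_t$, the process $N_{E_t}$ is a $\mathcal{G}_t = \tilde{\mathcal F}_{E_t}$-martingale; this is exactly the optional-sampling/stopping argument already carried out in the proof of Theorem~\ref{martingaleoftimechagedQwiener}. Applying that argument to $N_t = \|W_t\|_K^2 - tr(Q)\,t$ gives that $\|W_{E_t}\|_K^2 - tr(Q)E_t = N_{E_t}$ is a $\mathcal{G}_t$-martingale, and since $tr(Q)E_t$ is continuous, adapted, increasing and starts at $0$, it is the desired increasing process by uniqueness.

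Next I would establish the operator formula $\langle\langle W_E\rangle\rangle_t = QE_t$. By Definition~\ref{defincreasingandquadraticprocesses} I must show that for all $h,g\in K$,
\begin{equation*}
\langle W_{E_t},h\rangle_K\langle W_{E_t},g\rangle_K - \langle QE_t\,h,\,g\rangle_K
\end{equation*}
is a $\mathcal{G}_t$-martingale. Expanding the inner products over the eigenbasis, $\langle W_t,h\rangle_K = \sum_j \lambda_j^{1/2} w_j(t)\langle f_j,h\rangle_K$, and using independence of the $w_j$ together with the scalar identity $w_iw_j - \delta_{ij}t$ being an $\tilde{\mathcal F}_t$-martingale, I would first show that before the time change the process $\langle W_t,h\rangle_K\langle W_t,g\rangle_K - t\langle Qh,g\rangle_K$ is an $\tilde{\mathcal F}_t$-martingale (here $\langle Qh,g\rangle_K = \sum_j \lambda_j\langle f_j,h\rangle_K\langle f_j,g\rangle_K$). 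Then I apply the same composition-with-$E_t$ martingale principle as above to conclude the time-changed version is a $\mathcal{G}_t$-martingale, and since $\langle QE_t\,h,g\rangle_K = E_t\langle Qh,g\rangle_K$, this identifies $\langle\langle W_E\rangle\rangle_t = QE_t$. By Lemma~\ref{relationbetweenincreasingandquadraticprocess} this recovers the first formula as a consistency check, since $tr(QE_t) = E_t\,tr(Q)$.

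The main obstacle I anticipate is not any single computation but rather the rigorous justification of the two interchanges that underlie both parts: exchanging the infinite sum over $j$ with the conditional expectation, and composing a continuous martingale with the time change $E_t$. The first is handled by the trace-class condition $\sum_j\lambda_j<\infty$ and the finite second moments of $w_j(E_t)$, giving uniform integrability of the partial sums. The second is the genuinely delicate step, since $E_t$ is not a stopping time in the usual sense and the filtration $\mathcal{G}_t = \tilde{\mathcal F}_{E_t}$ must be handled carefully; however, this is precisely the optional-sampling plus Doob's-maximal-inequality plus dominated-convergence scheme executed in the proof of Theorem~\ref{martingaleoftimechagedQwiener}, so I would invoke that argument verbatim applied to the relevant scalar martingales $\|W_t\|_K^2 - tr(Q)t$ and $\langle W_t,h\rangle_K\langle W_t,g\rangle_K - t\langle Qh,g\rangle_K$ rather than reproving it.
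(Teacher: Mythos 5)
Your proposal is correct, and for the increasing process it follows essentially the paper's own route: write $N_t=\|W_t\|_K^2-tr(Q)t$, observe it is an $\tilde{\mathcal F}_t$-martingale, transfer the martingale property through the time change by the optional-sampling scheme of Theorem~\ref{martingaleoftimechagedQwiener}, and invoke uniqueness of the Doob--Meyer decomposition. Where you genuinely diverge is in identifying the operator-valued quadratic variation. The paper gets $\langle\langle W_{E}\rangle\rangle_{t}=QE_{t}$ by combining existence and uniqueness of the quadratic variation (Lemma~\ref{relationbetweenincreasingandquadraticprocess}) with the trace relation $tr(\langle\langle W_{E}\rangle\rangle_{t})=\langle W_{E}\rangle_{t}=tr(Q)E_{t}$; as written this is a necessary condition rather than a characterization, since equality of traces alone does not single out the operator. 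You instead verify the defining bilinear property of Definition~\ref{defincreasingandquadraticprocesses} directly: for all $h,g\in K$ the process $\langle W_{t},h\rangle_{K}\langle W_{t},g\rangle_{K}-t\langle Qh,g\rangle_{K}$ is an $\tilde{\mathcal F}_t$-martingale (using $w_iw_j$ a martingale for $i\neq j$ and $w_j^2-t$ a martingale for $i=j$), and then compose with $E_t$. This is slightly more work but pins down the operator unambiguously and makes the trace identity a consistency check rather than the identification step; the paper's route is shorter but leans on the reader recognizing $QE_t$ as the natural candidate. Your flagged interchange-of-sum issues and the composition-with-$E_t$ step are handled exactly as in Theorem~\ref{martingaleoftimechagedQwiener}, so no gap remains.
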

\begin{proof}
	Let $Q$ be a nonnegative definite, symmetric, trace-class operator on a separable Hilbert space $K$ and let $\{f_{j}\}_{j=1}^{\infty}$  be an orthonormal basis in $K$ diagonalizing $Q$ with corresponding eigenvalues $\{\lambda_{j}\}_{j=1}^{\infty}$. Then, 
\begin{eqnarray*}
	\|W_{E_{t}}\|^2_{K} &=\bigg\langle \sum_{j=1}^{\infty}\lambda_{j}^{1/2}w_{j}(E_{t})f_{j}, \sum_{i=1}^{\infty}\lambda_{i}^{1/2}w_{i}(E_{t})f_{i}\bigg\rangle_{K} = \sum_{j=1}^{\infty}\lambda_{j}w^2_{j}(E_{t}).
\end{eqnarray*}
	On the other hand,
\begin{eqnarray*}
    tr(Q)E_{t} =  E_{t}\sum^{\infty}_{j=1}\lambda_{j}.
\end{eqnarray*}
So define the process, $N_{E_{t}}$, as
\begin{eqnarray*}
	N_{E_{t}} = \|W_{E_{t}}\|^2_{K} - tr(Q)E_{t} = \sum_{j=1}^{\infty}\lambda_{j}(w_{j}^2(E_{t}) - E_{t}),
\end{eqnarray*}
which can be considered as a time-change of $N_{t}$ where
\begin{eqnarray*}
	N_{t} = \|W_{t}\|^2_{K} - tr(Q)t = \sum_{j=1}^{\infty}\lambda_{j}(w_{j}^2(t) - t).
\end{eqnarray*}
From Definition~\ref{defincreasingandquadraticprocesses}, $N_{t}$ is a real-valued martingale since $tr(Q)t$ is the unique increasing process of the $Q$-Wiener process. By an argument similar to that of Theorem~\ref{martingaleoftimechagedQwiener}, the time-changed process, $N_{E_{t}}$, is a martingale. Further, $W_{E_{t}}$ is a martingale and there is an increasing process, $\langle W_{E}\rangle_{t}$, such that $\|W_{E_{t}}\|_{K}^{2} - \langle W_{E}\rangle_{t}$ is a martingale. Finally, since $\|W_{E_{t}}\|^{2}_{K}$ is a real-valued submartingale, by the uniqueness of the Doob-Meyer decomposition~\cite{Protter2013}, 
\begin{eqnarray}\label{incrasingprocessproof}
\langle W_{E}\rangle_{t} = tr(Q)E_{t}.
\end{eqnarray}
Again, by Theorem~\ref{martingaleoftimechagedQwiener} and Lemma~\ref{relationbetweenincreasingandquadraticprocess}, the quadratic process, $\langle\langle W_{E}\rangle\rangle_{t}$, of the time-changed $Q$-Wiener process, $W_{E_{t}}$, exists and is unique, and satisfies
\begin{eqnarray}\label{relationbetweenincrasingandquadraticprocessproof}
	tr\big(\langle\langle W_{E}\rangle\rangle_{t}\big) = \langle W_{E}\rangle_{t}.
\end{eqnarray}
Therefore, from~\eqref{incrasingprocessproof} and~\eqref{relationbetweenincrasingandquadraticprocessproof}, 
\begin{eqnarray*}
	\langle\langle W_{E}\rangle\rangle_{t} = QE_{t}.
\end{eqnarray*}
\end{proof}
\section{SDEs driven by the time-changed Q-Wiener process}\label{stointegralwithtimechangedQwiener}
In this section, we begin by developing the It\^o stochastic integral with respect to the time-changed $Q$-Wiener process in Hilbert space. Also a time-changed It\^o formula for an It\^o process driven by a time-changed $Q$-Wiener process is developed. Finally, the existence and uniqueness of solutions to the time-changed Hilbert space-valued SDEs are investigated.

\subsection{Stochastic integral with respect to the time-changed $Q$-Wiener process}
In order to construct an It\^o stochastic integral with respect to the time-changed $Q$-Wiener process, we briefly recall It\^o stochastic integrals with respect to a Q-Wiener process without a time change as in~\cite{Prato2014,Mandrecar2010}.

As in Section \ref{timechangedQwienerprocess}, let $K$ and $H$ be two separable Hilbert spaces, and $Q$ be a symmetric, nonnegative definite trace-class operator on $K$. Let $\{f_{j}\}_{j=1}^{\infty}$ be an orthonormal basis (ONB) in $K$ such that $Qf_{j} = \lambda_{j}f_{j}$, where these eigenvalues $\lambda_{j} > 0$ for all $j = 1, 2, \cdots$. Then the separable Hilbert space $K_{Q} = Q^{1/2}K$ with an ONB $\{\lambda_{j}^{1/2}f_{j}\}_{j=1}^{\infty}$ is endowed with the following scalar product 
\begin{eqnarray*}
	\langle u, v\rangle_{K_{Q}} = \sum_{j=1}^{\infty}\frac{1}{\lambda_{j}}\langle u, f_{j}\rangle_{K}\langle v, f_{j}\rangle_{K}.
\end{eqnarray*}
Let $\mathcal{L}_{2}(K_{Q}, H)$ be the space of Hilbert-Schmidt operators from $K_{Q}$ to $H$. The Hilbert-Schmidt norm of an operator $L\in\mathcal{L}_{2}(K_{Q}, H)$ is given by 
\begin{eqnarray*}
	\|L\|_{\mathcal{L}_{2}(K_{Q}, H)}^{2} = \|LQ^{1/2}\|_{\mathcal{L}_{2}(K, H)}^{2} = tr((LQ^{1/2})(LQ^{1/2})^{*}).
\end{eqnarray*}
The scalar product between two operators $L, M\in\mathcal{L}_{2}(K_{Q}, H)$ is defined by 
\begin{eqnarray*}
	\langle L, M\rangle_{\mathcal{L}_{2}(K_{Q}, H)} = tr((LQ^{1/2})(MQ^{1/2})^{*}).
\end{eqnarray*}
Define $\Lambda_{2}(K_{Q},H)$ as the class of $\mathcal L_{2}(K_{Q},H)$-valued processes which are measurable mappings from 
\[
([0,T]\times \Omega,\mathcal B([0,T])\times \mathcal F)
\]
to 
\[
(\mathcal L_{2}(K_{Q},H),\mathcal B(\mathcal L_{2}(K_{Q},H))),
\]
adapted to the filtration $\{\mathcal F_{t}\}_{t\leq T}$,
and satisfying the condition
\begin{eqnarray*}
	\mathbb{E}\int_{0}^{T}\|\Phi(t)\|^{2}_{\mathcal L_{2}(K_{Q},H)}dt<\infty.
\end{eqnarray*}
Note that $\Lambda_{2}(K_{Q},H)$ is a Hilbert space if it is equipped with the norm
\begin{eqnarray*}
	\|\Phi\|_{\Lambda_{2}(K_{Q},H)}:=\left(\mathbb E\int_{0}^{T}\|\Phi(t)\|^{2}_{\mathcal L_{2}(K_{Q},H)}dt\right)^{1/2}.
\end{eqnarray*}
The following lemma from~\cite{Mandrecar2010} can be considered as a definition of a stochastic integral with respect to the $Q$-Wiener process:
\begin{lem} Mandrekar, \cite{Mandrecar2010}\label{stochintwithqwiener}
	Let $W_{t}$ be a Q-Wiener process in a separable Hilbert space $K$, $\Phi\in\Lambda_{2}(K_{Q}, H)$, and $\{f_{j}\}_{j=1}^{\infty}$ be an ONB in $K$ consisting of eigenvectors of $Q$. Then, 
	\begin{eqnarray*}
		\int_{0}^{t}\Phi(s)\mathrm{d}W_{s} = \sum_{j=1}^{\infty}\int_{0}^{t}(\Phi(s)\lambda_{j}^{1/2}f_{j})\mathrm{d}\langle W_{s},  \lambda_{j}^{1/2}f_{j}\rangle_{K_{Q}}.
	\end{eqnarray*}
\end{lem}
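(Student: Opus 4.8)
The plan is to show that the two maps $\Phi\mapsto\int_0^t\Phi(s)\,\mathrm dW_s$ (the integral constructed abstractly in \cite{Mandrecar2010} by extending from elementary processes via the It\^o isometry) and $\Phi\mapsto\sum_{j=1}^\infty\int_0^t(\Phi(s)\lambda_j^{1/2}f_j)\,\mathrm d\langle W_s,\lambda_j^{1/2}f_j\rangle_{K_Q}$ are one and the same linear operator on $\Lambda_2(K_Q,H)$. Since $\Lambda_2(K_Q,H)$ is the $L^2$-closure of the elementary $\mathcal L_2(K_Q,H)$-valued processes, and since both sides will be shown to be isometries into $L^2(\Omega;H)$, it is enough to verify the identity for elementary integrands and then pass to the limit. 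I emphasise that the left-hand side is the \emph{abstractly defined} integral, not the series; the content of the lemma is exactly that these coincide.

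First I would record the algebraic simplification of the integrator. Expanding $W_s=\sum_i\lambda_i^{1/2}w_i(s)f_i$ and using $\langle\lambda_j^{1/2}f_j,f_k\rangle_K=\lambda_j^{1/2}\delta_{jk}$ in the definition of $\langle\cdot,\cdot\rangle_{K_Q}$ yields $\langle W_s,\lambda_j^{1/2}f_j\rangle_{K_Q}=w_j(s)$. Hence the right-hand side is the $H$-valued series $\sum_j\int_0^t\Phi(s)\lambda_j^{1/2}f_j\,\mathrm dw_j(s)$, a sum of ordinary $H$-valued It\^o integrals against the independent scalar Brownian motions $w_j$. Next, establish the identity on an elementary process $\Phi(s)=\sum_m\Phi_m\mathbf 1_{(t_m,t_{m+1}]}(s)$ with $\Phi_m$ an $\mathcal F_{t_m}$-measurable operator. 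For such $\Phi$ the abstract integral equals $\sum_m\Phi_m\bigl(W_{t_{m+1}\wedge t}-W_{t_m\wedge t}\bigr)$ by definition; substituting the series for $W$ and interchanging the finite sum over $m$ with the sum over $j$ gives $\sum_j\sum_m\Phi_m\lambda_j^{1/2}f_j\bigl(w_j(t_{m+1}\wedge t)-w_j(t_m\wedge t)\bigr)=\sum_j\int_0^t\Phi(s)\lambda_j^{1/2}f_j\,\mathrm dw_j(s)$, which is precisely the right-hand side. This is the routine direct computation.

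The main work, and the step I expect to be the principal obstacle, is the limiting argument: one must show the right-hand series converges in $L^2(\Omega;H)$ for every $\Phi\in\Lambda_2(K_Q,H)$ and defines an isometry with the same constant as the abstract integral. Here I would exploit the independence of the $w_j$: the $H$-valued integrals $\int_0^t\Phi\lambda_j^{1/2}f_j\,\mathrm dw_j$ are mutually orthogonal in $L^2(\Omega;H)$, so by Pythagoras $\mathbb E\bigl\|\sum_{j=M}^N\int_0^t\Phi(s)\lambda_j^{1/2}f_j\,\mathrm dw_j(s)\bigr\|_H^2=\sum_{j=M}^N\mathbb E\int_0^t\|\Phi(s)\lambda_j^{1/2}f_j\|_H^2\,\mathrm ds$. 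Combining this with $\|\Phi(s)\|_{\mathcal L_2(K_Q,H)}^2=\sum_j\|\Phi(s)\lambda_j^{1/2}f_j\|_H^2$ (valid because $\{\lambda_j^{1/2}f_j\}$ is an ONB of $K_Q$), the tails are dominated by $\mathbb E\int_0^t\|\Phi(s)\|_{\mathcal L_2(K_Q,H)}^2\,\mathrm ds<\infty$, so the series is Cauchy and its total $L^2$-norm equals $\bigl(\mathbb E\int_0^t\|\Phi(s)\|_{\mathcal L_2(K_Q,H)}^2\,\mathrm ds\bigr)^{1/2}$, exactly the It\^o isometry satisfied by the abstract integral.

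Finally I would close the argument by density. Both sides are now continuous (indeed isometric) linear maps $\Lambda_2(K_Q,H)\to L^2(\Omega;H)$ that agree on the dense subspace of elementary processes. Choosing elementary $\Phi^{(n)}\to\Phi$ in the $\Lambda_2(K_Q,H)$-norm, the abstractly defined integrals $\int_0^t\Phi^{(n)}\,\mathrm dW_s$ converge in $L^2(\Omega;H)$ to $\int_0^t\Phi\,\mathrm dW_s$, while the series converge in $L^2(\Omega;H)$ to $\sum_j\int_0^t\Phi\lambda_j^{1/2}f_j\,\mathrm dw_j$; since these two sequences are equal term by term by the elementary-process identity, their limits coincide. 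This establishes the stated formula for all $\Phi\in\Lambda_2(K_Q,H)$.
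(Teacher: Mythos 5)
Your proof is correct, but note that the paper itself offers no proof of this lemma: it is quoted verbatim from Gawarecki--Mandrekar and the surrounding text explicitly says it ``can be considered as a definition'' of the integral, so there is no in-paper argument to compare against. What you have reconstructed is essentially the standard textbook derivation: verify the identity for elementary integrands, use independence of the $w_j$ (equivalently $[w_i,w_j]=0$) to get mutual orthogonality of the summands in $L^2(\Omega;H)$ and hence, via $\|\Phi(s)\|_{\mathcal L_2(K_Q,H)}^2=\sum_j\|\Phi(s)\lambda_j^{1/2}f_j\|_H^2$, the same It\^o isometry on both sides, then close by density of elementary processes in $\Lambda_2(K_Q,H)$. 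All three steps are sound, and you correctly identified the one genuinely delicate point, namely the reading of the integrator: since $W_s\notin K_Q=Q^{1/2}K$ almost surely when infinitely many $\lambda_j$ are positive, the pairing $\langle W_s,\lambda_j^{1/2}f_j\rangle_{K_Q}$ is not literally an inner product in $K_Q$; your reduction $\langle W_s,\lambda_j^{1/2}f_j\rangle_{K_Q}=\lambda_j^{-1}\langle W_s,f_j\rangle_K\,\lambda_j^{1/2}=w_j(s)$ is exactly the intended extension of the functional $\langle\cdot,\lambda_j^{1/2}f_j\rangle_{K_Q}$ from $K_Q$ to $K$, because only the $j$-th term of the defining series survives. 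If you wanted to polish the write-up, you could state explicitly that each $s\mapsto\Phi(s)\lambda_j^{1/2}f_j$ inherits adaptedness and $L^2(\mathrm ds\times\mathbb P)$-integrability from $\Phi\in\Lambda_2(K_Q,H)$, which licenses the scalar It\^o integrals you sum; but this is a one-line remark, not a gap. In short: a complete and correct proof of a statement the paper delegates to the literature, and it buys the reader a self-contained justification that the abstract integral and the coordinatewise series genuinely coincide rather than being identified by fiat.
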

In order to incorporate the time-change, $E_{t}$, into the It\^o stochastic integral, the generalized $\tilde{\Lambda}_{2}(K_{Q}, H)$ is also considered as the class of $\mathcal L_{2}(K_{Q},H)$-valued processes which are measurable mappings from 
\[
([0,T]\times \Omega,\mathcal B([0,T])\times \tilde{\mathcal{F}}_{t})
\]
 to 
\[
(\mathcal L_{2}(K_{Q},H),\mathcal B(\mathcal L_{2}(K_{Q},H))), 
\]
adapted to the filtration $\{\tilde{\mathcal{F}}_{E_{t}}\}_{t\leq T}$, and satisfying the condition
\begin{eqnarray*}
	\mathbb{E}\int_{0}^{T}\|\Phi(t)\|^{2}_{\mathcal L_{2}(K_{Q},H)}dE_{t}<\infty.
\end{eqnarray*}
Similarly,  $\tilde{\Lambda}_{2}(K_{Q},H)$ is a separable Hilbert space if it is equipped with the norm
\[
\|\Phi\|_{\tilde{\Lambda}_{2}(K_{Q},H)}:=\left(\mathbb E\int_{0}^{T}\|\Phi(t)\|^{2}_{\mathcal L_{2}(K_{Q},H)}dE_{t}\right)^{1/2}.
\]
Thus, It\^o stochastic integrals with respect to the time-changed $Q$-Wiener process can be introduced.
\begin{defn}\label{timechangstochinte}
	Let $W_{E_{t}}$ be a time-changed $Q$-Wiener process in a separable Hilbert space $K$, $\Phi\in\tilde{\Lambda}_{2}(K_{Q}, H)$, and let $\{f_{j}\}_{j=1}^{\infty}$ be an ONB in $K$ consisting of eigenvectors of $Q$. Then, 
\begin{eqnarray*}
		\int_{0}^{t}\Phi(s)\mathrm{d}W_{E_{s}} = \sum_{j=1}^{\infty}\int_{0}^{t}(\Phi(s)\lambda_{j}^{1/2}f_{j})\mathrm{d}\langle W_{E_{s}},  \lambda_{j}^{1/2}f_{j}\rangle_{K_{Q}}.
\end{eqnarray*}
\end{defn}
Now that the It\^o integral with respect to the time-changed $Q$-Wiener process has been established, the next step is to derive the It\^o isometry first for elementary processes, and then by extension, for arbitrary processes in $\tilde{\Lambda}_{2}(K_{Q}, H)$.
Consider the class of $\{\mathcal{G}_{t}\}$-adapted elementary processes of the form
\begin{eqnarray}\label{elementaryprocess}
	\Phi(t,\omega)=\phi(\omega)1_{\{0\}}(t)+\sum_{j=0}^{n-1}\phi_j(\omega)1_{(t_j,t_{j+1}]}(t),
\end{eqnarray}
where $0\leq t_0 \leq t_1 \leq \cdots \leq t_n=T$ and $\phi,\phi_j$, $j=0,1,\cdots,n-1$ are respectively $\mathcal{G}_{0}$-measurable and $\mathcal{G}_{t_j}$-measurable $\mathcal{L}_2(K_Q, H)$-valued random variables such that $\phi(\omega),\phi_j(\omega)$ are linear, bounded operators from $K$ to $H$.  Let $\mathcal{E}(\mathcal{L}(K,H))$ denote this class of elementary processes. Proceeding to the It\^o isometry for an elementary process $\Phi(t, \omega)$, we need the following useful lemma.
\begin{lem}\label{crossproducttermofmartingale}
Let $\{f_{j}\}_{j=1}^{\infty}$ be an ONB in $K$ consisting of eigenvectors of $Q$ and $\mathcal{G}_{s} = \tilde{\mathcal{F}}_{E_{s}}$ be the filtration.	Then, for $l\neq l^{\prime}$ and $t > s > 0$,
	\begin{eqnarray*}
	  \mathbb{E}\bigg(\mathbb{E}\bigg(\bigg\langle W_{E_t}-W_{E_s},f_{\l}\bigg\rangle_K \bigg\langle W_{E_t}-W_{E_s},f_{\l'}\bigg\rangle_K \bigg| \mathcal{G}_s\bigg)\bigg)=0.
	\end{eqnarray*}
\end{lem}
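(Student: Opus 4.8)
The plan is to reduce the expression inside the expectation to a product of scalar Brownian increments and then exploit both the mutual independence of the $w_j$ and their independence from the time change $E_t$. The cross term will vanish precisely because the two distinct Brownian factors decouple into separately mean-zero pieces.

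First I would compute the inner products explicitly. Since $\{f_j\}_{j=1}^{\infty}$ is an orthonormal basis of $K$ and $W_{E_t} = \sum_{j=1}^{\infty}\lambda_j^{1/2}w_j(E_t)f_j$, orthonormality gives $\langle W_{E_t}-W_{E_s}, f_l\rangle_K = \lambda_l^{1/2}\big(w_l(E_t)-w_l(E_s)\big)$ for each index $l$. Hence the integrand factors as
\[
\langle W_{E_t}-W_{E_s}, f_l\rangle_K\,\langle W_{E_t}-W_{E_s}, f_{l'}\rangle_K = \lambda_l^{1/2}\lambda_{l'}^{1/2}\big(w_l(E_t)-w_l(E_s)\big)\big(w_{l'}(E_t)-w_{l'}(E_s)\big).
\]
By the tower property, the outer expectation of the conditional expectation equals the unconditional expectation, so it suffices to show that $\mathbb{E}\big[(w_l(E_t)-w_l(E_s))(w_{l'}(E_t)-w_{l'}(E_s))\big]=0$.

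To evaluate this I would condition on the $\sigma$-algebra $\sigma[E_u : u\geq 0]$ generated by the entire time-change process. Because $E_t$ is nondecreasing we have $E_s\leq E_t$, and because the Brownian motions $\{w_j\}$ are independent of $E_t$ by Definition~\ref{timechangedqwiener}, after fixing $E_s=\tau_1$ and $E_t=\tau_2$ the two factors become genuine forward Brownian increments $w_l(\tau_2)-w_l(\tau_1)$ and $w_{l'}(\tau_2)-w_{l'}(\tau_1)$ over the deterministic interval $[\tau_1,\tau_2]$. Since $l\neq l'$, the processes $w_l$ and $w_{l'}$ are independent, so the conditional expectation of the product factors into the product of the conditional expectations, and each factor equals $\mathbb{E}[w_l(\tau_2)-w_l(\tau_1)]=0$ by the mean-zero increment property of Brownian motion. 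Thus the conditional expectation vanishes for almost every realization of $(E_s,E_t)$, and integrating against the law of the time change yields $0$.

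The main point requiring care is the independence bookkeeping rather than any hard estimate: the distinctness $l\neq l'$ is exactly what allows the cross term to factor, and the independence of $E_t$ from the $w_j$ is what permits treating $E_s$ and $E_t$ as constants after conditioning, thereby avoiding any delicate optional-sampling argument at the random times $E_s, E_t$. Integrability of the product, needed to justify the tower property and the interchange of expectation with the conditioning on $\sigma[E_u:u\geq0]$, is guaranteed by the finiteness of all moments of $E_t$ recorded in the proof of Theorem~\ref{martingaleoftimechagedQwiener} together with the Cauchy--Schwarz inequality applied to the two Brownian factors.
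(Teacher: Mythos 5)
Your proof is correct, and it takes a cleaner route than the paper's. The paper expands the product $\langle W_{E_t}-W_{E_s},f_{l}\rangle_K\langle W_{E_t}-W_{E_s},f_{l'}\rangle_K$ into four cross terms $I_1-I_2-I_3+I_4$ and treats them separately: $I_1$ and $I_4$ by conditioning on the time change and using independence of $w_l$ and $w_{l'}$, but $I_2$ and $I_3$ by first inserting the increment $w_{l'}(E_t)-w_{l'}(E_s)$ and invoking the martingale property $\mathbb{E}(w_{l'}(E_t)-w_{l'}(E_s)\mid\mathcal{G}_s)=0$, which rests on the optional-sampling argument of Theorem~\ref{martingaleoftimechagedQwiener}. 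You instead keep the increments intact, collapse the double expectation by the tower property, and condition once on $\sigma[E_u:u\geq 0]$; independence of $w_l$ from $w_{l'}$ then factors the conditional expectation into a product of two mean-zero terms. This avoids both the four-term bookkeeping and any appeal to the martingale structure of the time-changed Brownian motions, at the cost of relying a little more explicitly on the independence of the $w_j$'s from the entire time-change process (which Definition~\ref{timechangedqwiener} does provide, and which the paper itself uses freely when it conditions on $E_t$). Your closing remark on integrability via the finite moments of $E_t$ and Cauchy--Schwarz correctly justifies the tower property, a point the paper leaves implicit. One small note: since you only need each conditional factor to be mean zero and the two factors to be independent, the monotonicity $E_s\le E_t$ and the "forward increment" framing are not actually needed for the vanishing; they matter only if one wants the variance of the increment, which this lemma does not require.
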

\begin{proof}
	From Definition~\ref{timechangedqwiener} in Section~\ref{timechangedQwienerprocess},
\begin{eqnarray*}
	W_{E_{t}} = \sum_{j=1}^{\infty}\lambda_{j}^{1/2}w_{j}(E_{t})f_{j}.
\end{eqnarray*}
Using the definition of $W_{E_{t}}$,
\begin{eqnarray*}
   \begin{aligned}
		&\mathbb{E}\bigg(\mathbb{E}\bigg(\bigg\langle W_{E_t}-W_{E_s},f_{\l}\bigg\rangle_K \bigg\langle W_{E_t}-W_{E_s},f_{\l'}\bigg\rangle_K \bigg| \mathcal{G}_s\bigg)\bigg)\\
		&= \mathbb{E}\bigg\{\mathbb{E}\bigg(\bigg\langle W_{E_t},f_{\l}\bigg\rangle_K \bigg\langle W_{E_t},f_{\l'}\bigg\rangle_K - \bigg\langle W_{E_s},f_{\l}\bigg\rangle_K \bigg\langle W_{E_t},f_{\l'}\bigg\rangle_K\\ 
		&\quad- \bigg\langle W_{E_s},f_{\l'}\bigg\rangle_K \bigg\langle W_{E_t},f_{\l}\bigg\rangle_K
		+ \bigg\langle W_{E_s},f_{\l}\bigg\rangle_K \bigg\langle W_{E_s},f_{\l'}\bigg\rangle_K \bigg| \mathcal{G}_s\bigg)\bigg\}\\
		&= \mathbb{E}\bigg(\mathbb{E}\bigg(\lambda_{l}^{1/2} w_{l}(E_t) \lambda_{\l'}^{1/2} w_{\l'}(E_t)|\mathcal{G}_{s}\bigg)\bigg) - \mathbb{E}\bigg(\mathbb{E}\bigg(\lambda_{\l}^{1/2} w_{\l}(E_s) \lambda_{\l'}^{1/2} w_{\l'}(E_t) | \mathcal{G}_s\bigg)\bigg)\\
		&\quad- \mathbb{E}\bigg(\mathbb{E}\bigg(\lambda_{\l'}^{1/2} w_{\l'}(E_s) \lambda_{\l}^{1/2} w_{\l}(E_t)| \mathcal{G}_s\bigg)\bigg)
		+ \mathbb{E}\bigg(\lambda_{\l}^{1/2} w_{\l}(E_s) \lambda_{\l'}^{1/2} w_{\l'}(E_s)\bigg)\\
		&:= I_{1} - I_{2} - I_{3} + I_{4}.
   \end{aligned}
\end{eqnarray*}
Since $w_{l}$ is independent of $w_{l^{\prime}}$, conditioning on $E_{t}$ to compute the first term yields
\begin{eqnarray*}
	\begin{aligned}
		I_{1} &= \mathbb{E}\bigg(\mathbb{E}\bigg(\lambda_{l}^{1/2} w_{l}(E_t) \lambda_{\l'}^{1/2} w_{\l'}(E_t)|\mathcal{G}_{s}\bigg)\bigg) =\mathbb{E}(\lambda_{\l}^{1/2} w_{\l}(E_t) \lambda_{\l'}^{1/2} w_{\l'}(E_t))\\ 
		&= \lambda_{\l}^{1/2}\lambda_{\l'}^{1/2} \int_0^{\infty} \mathbb{E}(w_{\l}(\tau)w_{\l'}(\tau)) f_{E_t}(\tau) \mathrm{d}\tau = \lambda_{\l}^{1/2}\lambda_{\l'}^{1/2} \int_0^{\infty} 0 \cdot f_{E_t}(\tau) \mathrm{d}\tau=0.
	\end{aligned}
\end{eqnarray*}
On the other hand,
\begin{eqnarray*}
\begin{aligned}
I_{2} &= \mathbb{E}\bigg(\mathbb{E}\bigg(\lambda_{\l}^{1/2} w_{\l}(E_s) \lambda_{\l'}^{1/2} w_{\l'}(E_t) \bigg| \mathcal{G}_s\bigg)\bigg) = \lambda_{\l}^{1/2} \lambda_{\l'}^{1/2} \mathbb{E}\bigg(\mathbb{E}\bigg(w_{\l}(E_s)w_{\l'}(E_t) \bigg| \mathcal{G}_s\bigg)\bigg)\\
&= \lambda_{\l}^{1/2} \lambda_{\l'}^{1/2} \mathbb{E}\bigg(\mathbb{E}\bigg(w_{\l}(E_s)\bigg(w_{\l'}(E_t)-w_{\l'}(E_s)\bigg)+w_{\l}(E_s)w_{\l'}(E_s) \bigg| \mathcal{G}_s\bigg)\bigg)\\
&= \lambda_{\l}^{1/2} \lambda_{\l'}^{1/2} \mathbb{E}\bigg(w_{\l}(E_s)\mathbb{E}\bigg(w_{\l'}(E_t)-w_{\l'}(E_s) \bigg| \mathcal{G}_s\bigg)+w_{\l'}(E_s)w_{\l}(E_s)\bigg)\\
&=0,
\end{aligned}
\end{eqnarray*}
since $\mathbb{E}(w_{\l'}(E_t)-w_{\l'}(E_s) | \mathcal{G}_s)=0$ by the martingale property of $W_{E_t}$ and $\mathbb{E}(w_{\l'}(E_s)w_{\l}(E_s))=0$ by the same conditioning argument previously used in computing term $I_{1}$. Similarly, the third term, $I_{3}$, and the fourth term, $I_{4}$, are also equal to 0. 
\end{proof}
\begin{thm}\label{itoisometryofelementaryprocesses}
Let $\Phi\in\mathcal{E}(\mathcal{L}(K, H))$ be a bounded elementary process. Then, for $t\in[0, T]$, 
\begin{eqnarray*}
	\mathbb{E}\bigg\|\int_0^t \Phi(s)dW_{E_s}\bigg\|_H^2= \mathbb{E}\int_0^t \big\|\Phi(s)\big\|_{\mathcal{L}_2(K_Q, H)}^2 dE_s < \infty.
\end{eqnarray*}
\end{thm}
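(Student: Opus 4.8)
The plan is to verify the identity directly on the finite-sum representation of the integral of an elementary process, and then match it term-by-term with the right-hand side. By Definition~\ref{timechangstochinte}, for $\Phi\in\mathcal{E}(\mathcal{L}(K,H))$ of the form~\eqref{elementaryprocess} the stochastic integral collapses to the finite sum
\[
\int_0^t\Phi(s)\,dW_{E_s}=\sum_{j=0}^{n-1}\phi_j\big(W_{E_{t_{j+1}}}-W_{E_{t_j}}\big),
\]
where I may take $t=t_n$ without loss of generality, handling a general $t$ by refining the partition so that $t$ becomes a breakpoint. Writing $\Delta_jW_E:=W_{E_{t_{j+1}}}-W_{E_{t_j}}$ and expanding the Hilbert-space norm gives
\[
\mathbb{E}\Big\|\sum_{j}\phi_j\Delta_jW_E\Big\|_H^2=\sum_{j}\mathbb{E}\big\|\phi_j\Delta_jW_E\big\|_H^2+\sum_{j\neq k}\mathbb{E}\big\langle\phi_j\Delta_jW_E,\phi_k\Delta_kW_E\big\rangle_H,
\]
so the task splits into showing that the off-diagonal (in $j,k$) sum vanishes and that each diagonal term equals $\mathbb{E}\big[\|\phi_j\|_{\mathcal{L}_2(K_Q,H)}^2(E_{t_{j+1}}-E_{t_j})\big]$.

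For the off-diagonal terms, assume $k<j$. Then $\phi_j$, $\phi_k$, and $\Delta_kW_E$ are all $\mathcal{G}_{t_j}$-measurable, so conditioning on $\mathcal{G}_{t_j}$ and writing $\langle\phi_j\Delta_jW_E,\phi_k\Delta_kW_E\rangle_H=\langle\Delta_jW_E,\phi_j^\ast\phi_k\Delta_kW_E\rangle_K$ reduces the cross term to $\langle\mathbb{E}(\Delta_jW_E\mid\mathcal{G}_{t_j}),\phi_j^\ast\phi_k\Delta_kW_E\rangle_K$, which is zero because $W_{E_t}$ is a $\mathcal{G}_t$-martingale (Theorem~\ref{martingaleoftimechagedQwiener}). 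The tower property then kills the whole sum. The diagonal term is the substantive part: I would expand the increment in the eigenbasis, $\Delta_jW_E=\sum_l\lambda_l^{1/2}\big(w_l(E_{t_{j+1}})-w_l(E_{t_j})\big)f_l$, so that with $\Delta_jw_l:=w_l(E_{t_{j+1}})-w_l(E_{t_j})$,
\[
\mathbb{E}\big\|\phi_j\Delta_jW_E\big\|_H^2=\sum_{l,l'}\lambda_l^{1/2}\lambda_{l'}^{1/2}\,\mathbb{E}\Big[\langle\phi_jf_l,\phi_jf_{l'}\rangle_H\,\Delta_jw_l\,\Delta_jw_{l'}\Big].
\]
Conditioning on $\mathcal{G}_{t_j}$, so that $\phi_j$ is frozen, and invoking Lemma~\ref{crossproducttermofmartingale} shows that every $l\neq l'$ summand vanishes.

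It then remains to evaluate the surviving $l=l'$ terms, where the key input is the quadratic variation $\langle w_l(E_\cdot)\rangle_t=E_t$ from Theorem~\ref{timechangedBrownianmotion}. This yields the conditional second-moment identity $\mathbb{E}\big[(\Delta_jw_l)^2\mid\mathcal{G}_{t_j}\big]=\mathbb{E}\big[E_{t_{j+1}}-E_{t_j}\mid\mathcal{G}_{t_j}\big]$, obtained by expanding $(\Delta_jw_l)^2$ and applying both the martingale property of $w_l(E_\cdot)$ and the martingale property of $w_l^2(E_\cdot)-E_\cdot$. Combining this with the $\mathcal{G}_{t_j}$-measurability of $\|\phi_jf_l\|_H^2$ and the tower property gives $\mathbb{E}\|\phi_j\Delta_jW_E\|_H^2=\sum_l\lambda_l\,\mathbb{E}\big[\|\phi_jf_l\|_H^2(E_{t_{j+1}}-E_{t_j})\big]$, which is exactly $\mathbb{E}\big[\|\phi_j\|_{\mathcal{L}_2(K_Q,H)}^2(E_{t_{j+1}}-E_{t_j})\big]$ since $\|\phi_j\|_{\mathcal{L}_2(K_Q,H)}^2=\sum_l\lambda_l\|\phi_jf_l\|_H^2$. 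Summing over $j$ reproduces $\mathbb{E}\int_0^t\|\Phi(s)\|_{\mathcal{L}_2(K_Q,H)}^2\,dE_s$, and finiteness follows from boundedness of the $\phi_j$ together with $\|\phi_j\|_{\mathcal{L}_2(K_Q,H)}^2\le\|\phi_j\|_{\mathrm{op}}^2\,\mathrm{tr}(Q)$ and the finiteness of the moments of $E_t$. I expect the main obstacle to be the rigorous justification of the conditional second-moment identity $\mathbb{E}[(\Delta_jw_l)^2\mid\mathcal{G}_{t_j}]=\mathbb{E}[E_{t_{j+1}}-E_{t_j}\mid\mathcal{G}_{t_j}]$ and the interchange of the infinite eigenbasis sum with expectation, which I would legitimize through monotone or dominated convergence using the trace-class property of $Q$.
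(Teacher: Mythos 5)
Your proposal is correct and follows essentially the same route as the paper: split the squared norm of the telescoping sum into diagonal and cross terms, kill the $j\neq k$ terms by the martingale property of $W_{E_t}$, kill the mixed eigenbasis terms via Lemma~\ref{crossproducttermofmartingale}, and evaluate the surviving terms through the conditional identity $\mathbb{E}[(\Delta_j w_l)^2\mid\mathcal{G}_{t_j}]=\mathbb{E}[E_{t_{j+1}}-E_{t_j}\mid\mathcal{G}_{t_j}]$, which is exactly the step the paper uses (implicitly) when it replaces $\mathbb{E}(\langle W_{E_{t_{j+1}}},f_l\rangle_K^2\mid\mathcal{G}_{t_j})-\langle W_{E_{t_j}},f_l\rangle_K^2$ by $\lambda_l(E_{t_{j+1}}-E_{t_j})$. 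The only cosmetic difference is that you expand $\Delta_j W_E$ directly in the $K$-eigenbasis rather than first in an ONB $\{e_m\}$ of $H$ via $\phi_j^\ast e_m$ as the paper does; the two expansions agree by Parseval.
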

\begin{proof}
   First, without loss of generality, assume that $t=T$.  Then, for the bounded elementary process,~$\Phi$, defined in~\eqref{elementaryprocess},
 \[
 \mathbb{E} \bigg\|\int_0^T \Phi(s)dW_{E_s}\bigg\|_H^2 = \mathbb{E}\bigg\|\sum_{j=0}^{n-1}\phi_j(W_{E_{t_{j+1}}}-W_{E_{t_j}})\bigg\|_H^2\]
\[
= \sum_{j=0}^{n-1}\mathbb{E}\bigg\|\phi_j(W_{E_{t_{j+1}}}-W_{E_{t_j}})\bigg\|_H^2
\]	
\[
+ \sum_{i\neq j=0}^{n-1} \mathbb{E}\bigg\langle \phi_j(W_{E_{t_{j+1}}}-W_{E_{t_j}}), \phi_i(W_{E_{t_{i+1}}}-W_{E_{t_i}})\bigg\rangle_H := I + II.
\]
Let $\{e_m\}_{m=1}^{\infty}$ in $H$ and $\{f_{\l}\}_{\l=1}^{\infty}$ in $K$ be ONBs. For fixed $j$, $I_{j}$ is denoted by
\begin{eqnarray*}
	\begin{aligned}
		&I_{j} = \mathbb{E}\bigg\|\phi_j(W_{E_{t_{j+1}}}-W_{E_{t_j}})\bigg\|_H^2= \mathbb{E}\sum_{m=1}^{\infty} \bigg\langle \phi_j(W_{E_{t_{j+1}}}-W_{E_{t_j}}),e_m \bigg\rangle_H^2\\
		&= \sum_{m=1}^{\infty} \mathbb{E}\bigg(\mathbb{E}\big(\langle \phi_j(W_{E_{t_{j+1}}}-W_{E_{t_j}}),e_m \rangle_H^2 | \mathcal{G}_{t_j}\big)\bigg)\\
		&= \sum_{m=1}^{\infty} \mathbb{E}\bigg(\mathbb{E}\bigg(\bigg\langle W_{E_{t_{j+1}}}-W_{E_{t_j}},\phi_j^*e_m \bigg\rangle_K^2\bigg | \mathcal{G}_{t_j}\bigg)\bigg)\\
		&= \sum_{m=1}^{\infty} \mathbb{E}\bigg(\mathbb{E}\bigg(\bigg(\sum_{\l=1}^{\infty}\bigg\langle W_{E_{t_{j+1}}}-W_{E_{t_j}},f_{\l}\bigg\rangle_K \bigg\langle \phi_j^*e_m, f_{\l} \bigg\rangle_K\bigg)^2 \bigg| \mathcal{G}_{t_j}\bigg)\bigg)\\
		&= \sum_{m=1}^{\infty} \mathbb{E}\bigg(\mathbb{E}\bigg(\bigg(\sum_{\l=1}^{\infty}\bigg\langle W_{E_{t_{j+1}}}-W_{E_{t_j}},f_{\l}\bigg\rangle_K^2 \bigg\langle \phi_j^*e_m, f_{\l} \bigg\rangle_K^2\bigg) \bigg| \mathcal{G}_{t_j}\bigg)\bigg)\\
		&\quad + \sum_{m=1}^{\infty} \mathbb{E}\bigg(\mathbb{E}\bigg(\bigg(\sum_{\l\neq \l'=1}^{\infty}\langle W_{E_{t_{j+1}}}-W_{E_{t_j}},f_{\l}\rangle_K \langle \phi_j^*e_m, f_{\l} \rangle_K \\
		&\quad\times \langle W_{E_{t_{j+1}}}-W_{E_{t_j}},f_{\l'}\rangle_K \langle \phi_j^*e_m, f_{\l'} \rangle_K\bigg)\bigg| \mathcal{G}_{t_j}\bigg)\bigg)\\
		&:= J_{1} + J_{2}.
	\end{aligned}
\end{eqnarray*}
Since $\phi_j^*$ is $\mathcal{G}_{t_j}$-measurable and $W_{E_{t_{j+1}}}$ is a discrete martingale with respect to $\mathcal{G}_{t_{j}}$, the first term,~$J_{1}$, becomes
\begin{eqnarray*}
\begin{aligned}
   J_{1} &= \sum_{m=1}^{\infty} \mathbb{E}\bigg(\sum_{\l=1}^{\infty}\bigg\langle \phi_j^*e_m, f_{\l} \bigg\rangle_K^2 \mathbb{E}\bigg(\bigg\langle W_{E_{t_{j+1}}}-W_{E_{t_j}},f_{\l}\bigg\rangle_K^2 \bigg| \mathcal{G}_{t_j}\bigg)\bigg)\\
   &=\sum_{m=1}^{\infty} \mathbb{E}\bigg\{\sum_{\l=1}^{\infty}\bigg\langle \phi_j^*e_m, f_{\l} \bigg\rangle_K^2 \bigg(\mathbb{E}\bigg(\bigg\langle W_{E_{t_{j+1}}}, f_{\l} \bigg\rangle_K^2 \bigg| \mathcal{G}_{t_j}\bigg) - \bigg\langle  W_{E_{t_j}},f_{\l}\bigg\rangle_K^2\bigg)\bigg\}\\
   &=\sum_{m=1}^{\infty} \mathbb{E}\bigg\{\sum_{\l=1}^{\infty}\bigg\langle \phi_j^*e_m, f_{\l} \bigg\rangle_K^2 \bigg(\mathbb{E}\bigg(\bigg\langle W_{E_{t_{j+1}}}, f_{\l} \bigg\rangle_K^2 \bigg| \mathcal{G}_{t_j}\bigg)\bigg\}\\ 
   &\quad - \sum_{m=1}^{\infty} \mathbb{E}\bigg\{\sum_{\l=1}^{\infty}\bigg\langle \phi_j^*e_m, f_{\l} \bigg\rangle_K^2\bigg\langle  W_{E_{t_j}},f_{\l}\bigg\rangle_K^2\bigg)\bigg\}\\
   &=\mathbb{E}\bigg\{\sum_{m=1}^{\infty} \bigg(E_{t_{j+1}}-E_{t_j}\bigg) \sum_{\l=1}^{\infty} \lambda_{\l} \bigg\langle \phi_j^*e_m, f_{\l} \bigg\rangle_K^2\bigg\}\\
   &=\mathbb{E}\bigg\{\bigg(E_{t_{j+1}}-E_{t_j}\bigg) \sum_{m,\l=1}^{\infty} \bigg\langle \phi_j(\lambda_{\l}^{1/2}f_{\l}), e_m \bigg\rangle_H^2\bigg\}\\
   &=\mathbb{E}\bigg\{\bigg(E_{t_{j+1}}-E_{t_j}\bigg) \|\phi_j\|^2_{\mathcal{L}_2(K_Q,H)}\bigg\}.
		\end{aligned}
\end{eqnarray*}	
Also using the $\mathcal{G}_{t_j}$-measurability of $\phi_j^*$ and Lemma~\ref{crossproducttermofmartingale}, the second term, $J_{2}$, becomes
\begin{eqnarray*}
	\begin{aligned}
		J_{2} &= \sum_{m=1}^{\infty} \mathbb{E}\bigg\{\sum_{\l\neq \l'=1}^{\infty} \bigg\langle \phi_j^*e_m, f_{\l} \bigg\rangle_K \bigg\langle \phi_j^*e_m, f_{\l'} \bigg\rangle_K \\
		&\quad\times\mathbb{E}\bigg(\bigg\langle W_{E_{t_{j+1}}}-W_{E_{t_j}},f_{\l}\bigg\rangle_K  \bigg\langle W_{E_{t_{j+1}}}-W_{E_{t_j}},f_{\l'}\bigg\rangle_K  \bigg| \mathcal{G}_{t_j}\bigg)\bigg\}=0.
	\end{aligned}
\end{eqnarray*}
Thus, 
\[
I = \sum_{j = 0}^{n-1}I_{j} = \sum_{j = 0}^{n - 1} \mathbb{E}\bigg\{\bigg(E_{t_{j+1}}-E_{t_j}\bigg) \|\phi_j\|^2_{\mathcal{L}_2(K_Q,H)}\bigg\} 
\]
\[
= \mathbb{E}\int_0^T \big\|\Phi(s)\big\|_{\mathcal{L}_2(K_Q, H)}^2 dE_s < \infty.
\]
On the other hand, without loss of generality, assume that $i< j$. From Lemma~\ref{crossproducttermofmartingale},
\begin{eqnarray*}
	\begin{aligned}
	II &=\mathbb{E} \sum_{m=1}^{\infty} \mathbb{E}\bigg( \sum_{\l,\l'=1}^{\infty} \bigg\langle W_{E_{t_{j+1}}}- W_{E_{t_{j}}}, f_{\l} \bigg\rangle_K \bigg\langle \phi_j^*e_m, f_{\l} \bigg\rangle_K \\
	&\quad\times\bigg\langle W_{E_{t_{i+1}}}- W_{E_{t_{i}}}, f_{\l'} \bigg\rangle_K \bigg\langle \phi_i^*e_m, f_{\l'} \bigg\rangle_K \bigg| \mathcal{G}_{t_j}\bigg)=0.
	\end{aligned}
\end{eqnarray*}
\end{proof}
\begin{thm}(Time-changed It\^o Isometry)
	For $t\in[0, T]$, the stochastic integral $\Phi\mapsto\int_{0}^{t}\Phi(s)\mathrm{d}W_{E_{s}}$ with respect to a $K$-valued time-changed $Q$-Wiener process $W_{E_{t}}$ is an isometry between $\tilde{\Lambda}_{2}(K_{Q}, H)$ and the space of continuous square-integrable martingales $\mathcal{M}_{T}^{2}(H)$, i.e.,
	\begin{eqnarray}\label{generalitoisometryinsection3}
		\mathbb{E}\bigg\|\int_{0}^{t}\Phi(s)\mathrm{d}W_{E_{s}}\bigg\|_{H}^{2} = \mathbb{E}\int_{0}^{t}\big\|\Phi(s)\big\|^{2}_{\mathcal{L}_{2}(K_{Q}, H)}\mathrm{d}E_{s} < \infty.
	\end{eqnarray} 
\end{thm}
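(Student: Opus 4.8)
The plan is to prove \eqref{generalitoisometryinsection3} by the standard isometric-extension procedure, taking Theorem~\ref{itoisometryofelementaryprocesses} as the base case. For a bounded elementary process $\Phi\in\mathcal{E}(\mathcal{L}(K,H))$ that theorem already yields
\[
\mathbb{E}\Big\|\int_0^t \Phi(s)\,\mathrm{d}W_{E_s}\Big\|_H^2 = \mathbb{E}\int_0^t \|\Phi(s)\|_{\mathcal{L}_2(K_Q,H)}^2\,\mathrm{d}E_s ,
\]
so the map $\Phi\mapsto\int_0^t\Phi\,\mathrm{d}W_{E_s}$ is a linear isometry from $\big(\mathcal{E}(\mathcal{L}(K,H)),\,\|\cdot\|_{\tilde{\Lambda}_2(K_Q,H)}\big)$ into $L^2(\Omega;H)$. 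Since $\tilde{\Lambda}_2(K_Q,H)$ is a complete (Hilbert) space, once I establish that $\mathcal{E}(\mathcal{L}(K,H))$ is dense in $\tilde{\Lambda}_2(K_Q,H)$, the isometry extends uniquely and continuously to all of $\tilde{\Lambda}_2(K_Q,H)$: for arbitrary $\Phi$ I would pick elementary $\Phi_n\to\Phi$, observe that $\big\{\int_0^t\Phi_n\,\mathrm{d}W_{E_s}\big\}_n$ is then Cauchy in $L^2(\Omega;H)$ by the base isometry, \emph{define} $\int_0^t\Phi\,\mathrm{d}W_{E_s}$ as its limit, and pass the identity to the limit. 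The finiteness of the right-hand side is built into the definition of $\tilde{\Lambda}_2(K_Q,H)$.

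The key step is therefore the density of the elementary processes. First I would reduce to bounded integrands: for general $\Phi$, the truncations $\Phi_N:=\Phi\,\mathbf{1}_{\{\|\Phi\|_{\mathcal{L}_2(K_Q,H)}\le N\}}$ converge to $\Phi$ in $\tilde{\Lambda}_2(K_Q,H)$ by dominated convergence on $\Omega\times[0,T]$ with respect to the measure $\mathrm{d}\mathbb{P}\otimes\mathrm{d}E_t$, the dominating function being $\|\Phi\|^2_{\mathcal{L}_2(K_Q,H)}\in L^1(\mathrm{d}\mathbb{P}\otimes\mathrm{d}E_t)$. It then remains to approximate a bounded $\{\mathcal{G}_t\}$-adapted integrand by adapted elementary processes of the form~\eqref{elementaryprocess} in the norm $\big(\mathbb{E}\int_0^T\|\cdot\|^2_{\mathcal{L}_2(K_Q,H)}\,\mathrm{d}E_t\big)^{1/2}$.

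The main obstacle is exactly this last approximation, and it is more delicate than in the classical $Q$-Wiener case because the integrator in the norm is the random measure $\mathrm{d}E_t$ rather than $\mathrm{d}t$. Since $U_\beta$ has jumps, $E_t$ is flat on a random family of intervals, so $\mathrm{d}E_t$ is \emph{singular} with respect to Lebesgue measure; consequently the usual time-mollification argument, which relies on Lebesgue differentiation, does not transfer directly. To circumvent this I would use the duality between $E_t$ and the subordinator $U_\beta$: the change of variables $r=E_t$ (with the jumps of $U_\beta$ handled through left limits) converts $\int_0^T\|\Phi(t)\|^2\,\mathrm{d}E_t$ into an ordinary Lebesgue integral $\int_0^{E_T}\|\Phi(U_\beta(r^-))\|^2\,\mathrm{d}r$ over the random horizon $[0,E_T]$, and, because $\mathcal{G}_t=\tilde{\mathcal{F}}_{E_t}$ and $E_{U_\beta(r)}=r$, it carries $\{\mathcal{G}_t\}$-adapted integrands to processes adapted to the non-time-changed filtration $\{\tilde{\mathcal{F}}_r\}$. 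This reduces the density claim to the standard approximation of adapted processes by elementary ones in the classical $L^2(\mathrm{d}\mathbb{P}\otimes\mathrm{d}r)$ setting, after which the approximating elementary processes are transported back through the inverse change of variables. Alternatively one may argue directly, exploiting only that $E$ is continuous and nondecreasing so that $\mathrm{d}E_t$ is non-atomic, which suffices to approximate a continuous adapted integrand by its values on a refining sequence of partitions and to obtain convergence in the $\mathrm{d}E_t$-norm by bounded convergence.

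Finally I would verify that the limit lands in $\mathcal{M}_T^2(H)$. For each elementary $\Phi_n$ the process $t\mapsto\int_0^t\Phi_n\,\mathrm{d}W_{E_s}$ is a continuous square-integrable $H$-valued $\{\mathcal{G}_t\}$-martingale, since $W_{E_t}$ is such a martingale by Theorem~\ref{martingaleoftimechagedQwiener} (hence continuous in $t$), and the martingale increments behave as in Lemma~\ref{crossproducttermofmartingale}. The martingale property is preserved under $L^2(\Omega;H)$ limits, and Doob's maximal inequality combined with the base isometry upgrades the $L^2$ convergence of $\int_0^\cdot\Phi_n\,\mathrm{d}W_{E_s}$ to convergence uniform in $t$ along a subsequence, so the limiting integral is again a continuous square-integrable martingale. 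This simultaneously places $\int_0^\cdot\Phi\,\mathrm{d}W_{E_s}$ in $\mathcal{M}_T^2(H)$ and, by continuity of the $L^2$ norm under the limit, establishes the isometry~\eqref{generalitoisometryinsection3}.
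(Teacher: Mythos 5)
Your proposal is correct and follows the same overall route as the paper: the isometry of Theorem~\ref{itoisometryofelementaryprocesses} on elementary processes, followed by an isometric extension to all of $\tilde{\Lambda}_{2}(K_{Q},H)$ using completeness, with Doob's maximal inequality upgrading $L^{2}$ convergence to uniform convergence so that the limit is a continuous square-integrable martingale. The one substantive difference is where the two arguments place their effort. The paper simply \emph{posits} a sequence of elementary processes $\Phi_{n}\to\Phi$ in $\tilde{\Lambda}_{2}(K_{Q},H)$ and then spends its care on the Borel--Cantelli construction of a continuous version of the limit; you instead identify the density of $\mathcal{E}(\mathcal{L}(K,H))$ in $\tilde{\Lambda}_{2}(K_{Q},H)$ as the genuine obstacle (correctly observing that $\mathrm{d}E_{t}$ is singular with respect to Lebesgue measure, so the classical mollification argument does not transfer verbatim) and sketch a repair via the change of variables $r=E_{t}$, which transports the problem to the untime-changed $L^{2}(\mathrm{d}\mathbb{P}\otimes\mathrm{d}r)$ setting where the standard density result applies. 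This fills a gap the paper leaves implicit; the only point to watch in executing it is the measurability bookkeeping when composing $\Phi$ with $U_{\beta}(r^{-})$ (one needs $E_{U_{\beta}(r^{-})}=r$ and the synchronization of the filtrations $\mathcal{G}_{t}=\tilde{\mathcal{F}}_{E_{t}}$ and $\tilde{\mathcal{F}}_{r}$, as in Kobayashi's duality lemmas), and, if you take the alternative partition-based route, the additional step of approximating a merely measurable adapted integrand by a continuous one. Conversely, if you want the limit to be a \emph{pathwise continuous} martingale rather than just an $L^{2}$ limit at each fixed $t$, you should retain something like the paper's Borel--Cantelli argument along a rapidly convergent subsequence rather than relying on uniform convergence "along a subsequence" alone; your sketch gestures at this but the paper's version is the cleaner way to nail it down.
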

\begin{proof}
For elementary processes $\Phi\in\mathcal{E}(\mathcal{L}(K,H)$, Theorem~\ref{itoisometryofelementaryprocesses} establishes the desired equality~\eqref{generalitoisometryinsection3} and consequently the square-integrability of the integral $\displaystyle\int_{0}^{t}\Phi(s)\mathrm{d}W_{E_{s}}$. Furthermore, since the time-changed Q-Wiener process, $W_{E_{t}}$, is a K-valued martingale, for any $h\in H$ and $s < t$,
\[
	\mathbb{E}\bigg(\bigg\langle \int_0^t \Phi(r) dW_{E_r}, h \bigg\rangle_H \bigg| \mathcal{G}_s\bigg)
	= \mathbb{E}\bigg(\bigg\langle \sum_{j=0}^{n-1} \phi_j(W_{E_{t_{j+1}\wedge t}}-W_{E_{t_{j}\wedge t}}), h \bigg\rangle_H \bigg| \mathcal{G}_s\bigg)
\]
\[
= \sum_{j=0}^{n-1} \mathbb{E}\bigg(\bigg\langle W_{E_{t_{j+1}\wedge t}}-W_{E_{t_{j}\wedge t}}, \phi^*(h) \bigg\rangle_K \bigg| \mathcal{G}_s\bigg) = \sum_{j=0}^{n-1} \bigg\langle W_{E_{t_{j+1}\wedge s}}-W_{E_{t_{j}\wedge s}}, \phi^*(h) \bigg\rangle_K
\]
\[
	= \bigg\langle \sum_{j=0}^{n-1} \phi(W_{E_{t_{j+1}\wedge s}}-W_{E_{t_{j}\wedge s}}), h \bigg\rangle_H = \bigg\langle \int_0^s \Phi(s) dW_{E_s}, h \bigg\rangle_H,
\]
which implies that the stochastic integral $\int_{0}^{t}\Phi(s)\mathrm{d}W_{E_{s}}$ is a square-integrable martingale. Therefore, the desired result holds when $\Phi(s)$ is an elementary process.

Now, let $\{\Phi_n\}_{n=1}^{\infty}$ be a sequence of elementary processes approximating $\Phi \in \tilde{\Lambda}_2(K_Q, H)$.  Assume that $\Phi_1=0$ and $||\Phi_{n+1}-\Phi_n||_{\tilde{\Lambda}_2(K_Q, H)} < \frac{1}{2^n}$. Then, by Doob's Maximal Inequality,
\[
		\sum_{n=1}^{\infty} \mathbb{P} \left(\sup_{t\leq T} \left\lVert \int_0^t \Phi_{n+1}(s) dW_{E_s} - \int_0^t \Phi_{n}(s) dW_{E_s} \right\rVert_H > \frac{1}{n^2} \right)
\]
\[
		\leq \sum_{n=1}^{\infty} n^4 \mathbb{E} \left\lVert \int_0^T (\Phi_{n+1}(s)- \Phi_{n}(s)) dW_{E_s} \right\rVert^2_H 
\]
\[
= \sum_{n=1}^{\infty} n^4 \mathbb{E} \int_0^T ||\Phi_{n+1}(s)- \Phi_{n}(s)||^2_{\mathcal{L}(K_Q,H)} dE_s \leq \frac{T^{\beta}}{\Gamma(\beta+1)}\sum_{n=1}^{\infty} \frac{n^4}{2^n} < \infty. 
\]
By Borel-Cantelli, it follows that for some $k(\omega) > 0$,
\begin{eqnarray*}
	\sup_{t\leq T} \left\lVert \int_0^t \Phi_{n+1}(s) dW_{E_s} - \int_0^t \Phi_{n}(s) dW_{E_s} \right\rVert_H \leq \frac{1}{n^2},~~~ n>k(\omega),
\end{eqnarray*}
holds $\mathbb{P}$-almost surely. Therefore, for every $t\leq T$,
\begin{eqnarray*}
	\sum_{n=1}^{\infty} \left( \int_0^t \Phi_{n+1}(s) dW_{E_s} - \int_0^t \Phi_{n}(s) dW_{E_s} \right)\to \int_0^t \Phi(s) dW_{E_s}~\textrm{in}~L^2(\Omega, H),
\end{eqnarray*}
which also converges $\mathbb{P}$-a.s. to a continuous version of the integral.

Thus, the map $\displaystyle\Phi \mapsto \int_0^t \Phi(s) dW_{E_s}$, viewed as an isometry from elementary processes  to the space of continuous square-integrable martingales, has an extension to $\Phi\in\tilde{\Lambda}_2(K_Q, H)$ by the completeness property of $H$.  
\end{proof}

The following two change of variable formulas concern the It\^o stochastic integral related to the time-change $E_{t}$. They are needed later and can be considered as the Hilbert space extensions of formulas in~\cite{Kei2011}.
\begin{thm}(\textbf{1st change of variable formula in Hilbert space})\label{1stchangeformula}
Let $W_{t}$ be a $Q$-Wiener process in a separable Hilbert space $K$, $\Phi\in\tilde{\Lambda}_{2}(K_{Q}, H)$, and $E_{t}$ be the inverse of a $\beta$-stable subordinator. Then, with probability one, for all $t \geq 0$,  
\begin{eqnarray*}
	\int_{0}^{E_{t}}\Phi(s)\mathrm{d}W_{s} = \int_{0}^{t}\Phi(E_{s})\mathrm{d}W_{E_{s}}.
\end{eqnarray*}
\end{thm}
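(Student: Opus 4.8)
The plan is to follow the usual two-step programme for time-changed integrals: first verify the identity for elementary integrands, where both sides collapse to finite sums that can be matched pathwise, and then extend to general $\Phi\in\tilde{\Lambda}_{2}(K_{Q},H)$ by approximation, using the ordinary and the time-changed It\^o isometries already in hand. Throughout I would exploit that, by \eqref{generalizationoffiltration}, the entire path of the time change $E_{\cdot}$ is $\tilde{\mathcal{F}}_{0}$-measurable, so that each $E_{t}$ is an a.s.\ finite $\{\tilde{\mathcal{F}}_{\tau}\}$-stopping time and $1_{(a,b]}(E_{s})$ is $\mathcal{G}_{s}=\tilde{\mathcal{F}}_{E_{s}}$-measurable.

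First I would treat a single block $\Phi(r)=\phi\,1_{(a,b]}(r)$ with $\phi$ an appropriate $\mathcal{G}_{a}$-measurable bounded operator and reduce both sides to the same quantity. On the left, writing $M_{u}=\int_{0}^{u}\Phi(s)\,\mathrm{d}W_{s}=\phi\,(W_{b\wedge u}-W_{a\wedge u})$ and evaluating at the random upper limit $u=E_{t}$ gives $\phi\,(W_{b\wedge E_{t}}-W_{a\wedge E_{t}})$. On the right, as a function of $s$ the integrand $1_{(a,b]}(E_{s})=1_{\{a<E_{s}\leq b\}}$ is the indicator of the interval $(\sigma_{a},\sigma_{b}]$, where $\sigma_{c}=\inf\{s:E_{s}>c\}=U_{\beta}(c)$ a.s.; using the inverse relation $E_{\sigma_{c}}=c$ together with the monotonicity of $E$ one checks the pathwise identities $E_{\sigma_{b}\wedge t}=b\wedge E_{t}$ and $E_{\sigma_{a}\wedge t}=a\wedge E_{t}$, so that by Definition~\ref{timechangstochinte} the right-hand side equals $\phi\,(W_{E_{\sigma_{b}\wedge t}}-W_{E_{\sigma_{a}\wedge t}})=\phi\,(W_{b\wedge E_{t}}-W_{a\wedge E_{t}})$ as well. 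Linearity then extends the identity to all of $\mathcal{E}(\mathcal{L}(K,H))$.

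Next I would pass to the limit. Choose elementary $\Phi_{n}\to\Phi$ in $\tilde{\Lambda}_{2}(K_{Q},H)$, as in the proof of the time-changed It\^o isometry. On the right, $\int_{0}^{t}\Phi_{n}(E_{s})\,\mathrm{d}W_{E_{s}}\to\int_{0}^{t}\Phi(E_{s})\,\mathrm{d}W_{E_{s}}$ directly from~\eqref{generalitoisometryinsection3}. On the left, I would view $E_{t}$ as the upper limit of the ordinary $Q$-Wiener integral and combine optional stopping with the deterministic Lebesgue--Stieltjes substitution $\int_{0}^{E_{t}}g(s)\,\mathrm{d}s=\int_{0}^{t}g(E_{s})\,\mathrm{d}E_{s}$ (valid since $E$ is continuous and nondecreasing) to obtain
\begin{eqnarray*}
\mathbb{E}\bigg\|\int_{0}^{E_{t}}(\Phi_{n}-\Phi)(s)\,\mathrm{d}W_{s}\bigg\|_{H}^{2}
=\mathbb{E}\int_{0}^{E_{t}}\|(\Phi_{n}-\Phi)(s)\|^{2}_{\mathcal{L}_{2}(K_{Q},H)}\,\mathrm{d}s
\leq\|\Phi_{n}-\Phi\|^{2}_{\tilde{\Lambda}_{2}(K_{Q},H)}\to 0,
\end{eqnarray*}
where the last inequality uses the substitution and $t\leq T$. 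Hence $\int_{0}^{E_{t}}\Phi_{n}(s)\,\mathrm{d}W_{s}\to\int_{0}^{E_{t}}\Phi(s)\,\mathrm{d}W_{s}$ in $L^{2}$; since the two sides agree for every $\Phi_{n}$, they agree for $\Phi$, and choosing continuous versions and a subsequence converging $\mathbb{P}$-a.s.\ upgrades this to equality with probability one, simultaneously for all $t\geq 0$.

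The main obstacle I expect is the careful bookkeeping of the time change at the random upper limit and across the flat stretches of $E$: one must justify that $\Phi$ adapted to $\{\tilde{\mathcal{F}}_{s}\}$ makes $\Phi(E_{\cdot})$ adapted to $\mathcal{G}_{\cdot}=\{\tilde{\mathcal{F}}_{E_{s}}\}$, that $E_{t}$ is an admissible stopping time so that optional stopping and the ordinary It\^o isometry apply at $u=E_{t}$, and, in the elementary case, the pathwise identities $E_{\sigma_{c}}=c$ and $E_{\sigma_{c}\wedge t}=c\wedge E_{t}$, which encode how the jumps of $U_{\beta}$ become the constancy intervals of $E$. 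Once these time-change relations are pinned down, the remainder is a routine isometry-and-density argument.
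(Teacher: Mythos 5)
Your route is genuinely different from the paper's. The paper pairs both sides with an arbitrary $h\in H$, expands each integral via Lemma~\ref{stochintwithqwiener} and Definition~\ref{timechangstochinte} into a sum of real-valued integrals against the coordinates $\langle W_{\cdot},\lambda_j^{1/2}f_j\rangle_{K_Q}$, and then invokes Kobayashi's scalar first change-of-variable formula coordinatewise; you instead redo the elementary-process/density argument from scratch. Your elementary step is correct and more informative than a citation: the identities $\sigma_c=U_\beta(c)$, $E_{\sigma_c}=c$ a.s., and $E_{\sigma_c\wedge t}=c\wedge E_t$ do reduce both sides of the identity to $\phi\,(W_{b\wedge E_t}-W_{a\wedge E_t})$ for a single block.

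The gap is in the passage to the limit. The displayed inequality
\[
\mathbb{E}\int_0^{E_t}\|(\Phi_n-\Phi)(s)\|^2_{\mathcal{L}_2(K_Q,H)}\,\mathrm{d}s\;\le\;\|\Phi_n-\Phi\|^2_{\tilde\Lambda_2(K_Q,H)}
\]
is false in general. By the very substitution you invoke, the left-hand side equals $\mathbb{E}\int_0^t\|(\Phi_n-\Phi)(E_s)\|^2\,\mathrm{d}E_s$, where the integrand is evaluated at $E_s$, whereas the $\tilde\Lambda_2$-norm is $\mathbb{E}\int_0^T\|(\Phi_n-\Phi)(s)\|^2\,\mathrm{d}E_s$, where it is evaluated at $s$. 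These are different Stieltjes integrals: for $\Psi=1_{(a,b]}$ the first is $\mathbb{E}\bigl(b\wedge E_t-a\wedge E_t\bigr)$ and the second is $\mathbb{E}\bigl(E_{b\wedge T}-E_{a\wedge T}\bigr)$, and neither dominates the other (take $a=T$, $b=\infty$, $T$ small: the second vanishes while the first is positive on $\{E_T>T\}$). The same mismatch affects the right-hand side, where the isometry~\eqref{generalitoisometryinsection3} applied to the integrand $\Phi_n(E_\cdot)-\Phi(E_\cdot)$ again produces $\mathbb{E}\int_0^t\|(\Phi_n-\Phi)(E_s)\|^2\,\mathrm{d}E_s$ rather than the $\tilde\Lambda_2$-distance. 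The repair is to approximate $\Phi$ by elementary processes in the ``composed'' norm $\mathbb{E}\int_0^{E_T}\|\cdot\|^2\,\mathrm{d}s=\mathbb{E}\int_0^T\|\cdot(E_s)\|^2\,\mathrm{d}E_s$, which by your own computation is exactly the norm controlling both sides simultaneously; with that choice the rest of your argument, including the a.s.\ upgrade via continuous versions, goes through. As written, however, the convergence of neither side is established.
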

\begin{proof}
	Let $\{f_{j}\}_{j=1}^{\infty}$ be an ONB in the separable Hilbert space $K$ consisting of eigenvectors of $Q$. Then, it follows from Lemma~\ref{stochintwithqwiener} that
	\begin{eqnarray*}
		\int_{0}^{E_{t}}\Phi(s)\mathrm{d}W_{s} = \sum_{j=1}^{\infty}\int_{0}^{E_{t}}(\Phi(s)\lambda_{j}^{1/2}f_{j})\mathrm{d}\langle W_{s},  \lambda_{j}^{1/2}f_{j}\rangle_{K_{Q}}.
	\end{eqnarray*}
	For any $h\in H$,
	\begin{eqnarray*}
		\begin{aligned}
			\bigg\langle \int_{0}^{E_{t}}\Phi(s)\mathrm{d}W_{s}, h\bigg\rangle_{H} &= \bigg\langle \sum_{j=1}^{\infty}\int_{0}^{E_{t}}(\Phi(s)\lambda_{j}^{1/2}f_{j})\mathrm{d}\langle W_{s},  \lambda_{j}^{1/2}f_{j}\rangle_{K_{Q}}, h\bigg\rangle_{H}\\
			&=  \sum_{j=1}^{\infty}\int_{0}^{E_{t}}\langle (\Phi(s)\lambda_{j}^{1/2}f_{j}), h\rangle_{H}\mathrm{d}\langle W_{s},  \lambda_{j}^{1/2}f_{j}\rangle_{K_{Q}}\\
			&= \sum_{j=1}^{\infty}\int_{0}^{t}\langle (\Phi(E_{s})\lambda_{j}^{1/2}f_{j}), h\rangle_{H}\mathrm{d}\langle W_{E_{s}},  \lambda_{j}^{1/2}f_{j}\rangle_{K_{Q}}\\
			&= \bigg\langle\sum_{j=1}^{\infty}\int_{0}^{t} (\Phi(E_{s})\lambda_{j}^{1/2}f_{j}) \mathrm{d}\langle W_{E_{s}},  \lambda_{j}^{1/2}f_{j}\rangle_{K_{Q}}, h\bigg\rangle_{H}\\
			&= \bigg\langle \int_{0}^{t}\Phi(E_{s})\mathrm{d}W_{E_{s}}, h\bigg\rangle_{H}.
		\end{aligned}
	\end{eqnarray*}
The third equality follows from the first change of variable formula of the real-valued stochastic integral from~\cite{Kei2011}.
\end{proof}

\begin{thm}(\textbf{2nd change of variable formula in Hilbert space})\label{2ndcnhageformula}
	Let $W_{t}$ be a Q-Wiener process in a separable Hilbert space $K$ and $\Phi\in\tilde{\Lambda}_{2}(K_{Q}, H)$. Let $U_{t}$ be a $\beta$-stable subordinator with $\beta\in(0, 1)$ and $E_{t}$ be its inverse stable subordinator. Then, with probability one, for all $t \geq 0$,  
	\begin{eqnarray*}
		\int_{0}^{t}\Phi(s)\mathrm{d}W_{E_{s}} = \int_{0}^{E_{t}}\Phi(U_{s-})\mathrm{d}W_{s}.
	\end{eqnarray*}
\end{thm}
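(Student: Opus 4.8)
The plan is to lift the real-valued second change of variable formula of~\cite{Kei2011} to the Hilbert-space setting exactly as the first change of variable formula (Theorem~\ref{1stchangeformula}) was obtained: reduce the claimed identity to a family of scalar identities indexed by an eigenbasis of $Q$, dispatch each one by the real-valued formula, and reassemble using the arbitrariness of a test vector. Let $\{f_j\}_{j=1}^\infty$ be an ONB of $K$ consisting of eigenvectors of $Q$, and note that $\langle W_s, \lambda_j^{1/2} f_j\rangle_{K_Q} = w_j(s)$, so that each scalar component is driven by the Brownian motion $w_j$.

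First I would expand the right-hand side through Lemma~\ref{stochintwithqwiener},
\[
\int_0^{E_t}\Phi(U_{s-})\,\mathrm{d}W_s = \sum_{j=1}^\infty \int_0^{E_t}\big(\Phi(U_{s-})\lambda_j^{1/2}f_j\big)\,\mathrm{d}\langle W_s, \lambda_j^{1/2}f_j\rangle_{K_Q},
\]
pair both sides with an arbitrary $h\in H$, and move $h$ inside the sum and each scalar integral. This reduces the statement to verifying, for every $j$,
\[
\int_0^{E_t}\langle \Phi(U_{s-})\lambda_j^{1/2}f_j, h\rangle_H\,\mathrm{d}w_j(s) = \int_0^t \langle \Phi(s)\lambda_j^{1/2}f_j, h\rangle_H\,\mathrm{d}w_j(E_s),
\]
which is precisely the real-valued second change of variable formula of~\cite{Kei2011} applied to the $\{\mathcal{G}_s\}$-adapted scalar integrand $r\mapsto \langle \Phi(r)\lambda_j^{1/2}f_j, h\rangle_H$ and the martingale $w_j$. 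Summing over $j$ and identifying the left-hand side, via Definition~\ref{timechangstochinte}, with $\langle \int_0^t \Phi(s)\,\mathrm{d}W_{E_s}, h\rangle_H$, the arbitrariness of $h$ then yields the asserted equality.

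The substantive points lie in the reduction rather than the assembly. First, one must check that $s\mapsto \Phi(U_{s-})$ is a legitimate integrand for the non-time-changed integral, and in particular that $\int_0^{E_t}\Phi(U_{s-})\,\mathrm{d}W_s$ makes sense up to the random (stopping-time) upper limit $E_t$: since $\Phi$ is adapted to $\{\mathcal{G}_t\}=\{\tilde{\mathcal{F}}_{E_t}\}$ and $E_{U_{s-}}\leq s$, the composed process is $\{\tilde{\mathcal{F}}_s\}$-adapted, and the left limit $U_{s-}$ is indispensable precisely because the subordinator $U$ is pure-jump. Second, the interchange of the infinite sum with the pairing and the scalar integrals requires a convergence justification, supplied by the time-changed It\^o isometry together with $\Phi\in\tilde{\Lambda}_2(K_Q, H)$ exactly as in the square-integrability estimates above. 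The one genuinely new ingredient beyond Theorem~\ref{1stchangeformula} is the appeal to the real-valued second formula of~\cite{Kei2011}, whose own proof absorbs the difficulties created by the jumps of $U$; I expect this dependence, rather than the infinite-dimensional bookkeeping, to be the crux.
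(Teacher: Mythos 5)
Your proposal is correct and follows essentially the same route as the paper: expand the integral in the eigenbasis of $Q$ via Definition~\ref{timechangstochinte} and Lemma~\ref{stochintwithqwiener}, pair with an arbitrary $h\in H$, apply the real-valued second change of variable formula of~\cite{Kei2011} componentwise, and conclude by the arbitrariness of $h$. The only cosmetic difference is that you start from the right-hand side rather than the left, and your added remarks on adaptedness of $\Phi(U_{s-})$ and on justifying the interchange of sum and integral are reasonable elaborations of steps the paper leaves implicit.
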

\begin{proof}
	Let $\{f_{j}\}_{j=1}^{\infty}$ be an ONB in the separable Hilbert space $K$ consisting of eigenvectors of $Q$. Applying Definition~\ref{timechangstochinte} yields,
	\begin{eqnarray*}
		\int_{0}^{t}\Phi(s)\mathrm{d}W_{E_{s}} = \sum_{j=1}^{\infty}\int_{0}^{t}(\Phi(s)\lambda_{j}^{1/2}f_{j})\mathrm{d}\langle W_{E_{s}},  \lambda_{j}^{1/2}f_{j}\rangle_{K_{Q}}.
	\end{eqnarray*}
	For any $h\in H$, 
	\begin{eqnarray*}
		\begin{aligned}
			\bigg\langle \int_{0}^{t}\Phi(s)\mathrm{d}W_{E_{s}}, h\bigg\rangle_{H} &= \bigg\langle \sum_{j=1}^{\infty}\int_{0}^{t}(\Phi(s)\lambda_{j}^{1/2}f_{j})\mathrm{d}\langle W_{E_{s}},  \lambda_{j}^{1/2}f_{j}\rangle_{K_{Q}}, h\bigg\rangle_{H}\\
			&=  \sum_{j=1}^{\infty}\int_{0}^{t}\langle (\Phi(s)\lambda_{j}^{1/2}f_{j}), h\rangle_{H}\mathrm{d}\langle W_{E_{s}},  \lambda_{j}^{1/2}f_{j}\rangle_{K_{Q}}\\
			&= \sum_{j=1}^{\infty}\int_{0}^{E_{t}}\langle (\Phi(U_{s-})\lambda_{j}^{1/2}f_{j}), h\rangle_{H}\mathrm{d}\langle W_{s},  \lambda_{j}^{1/2}f_{j}\rangle_{K_{Q}}\\
			&= \bigg\langle\sum_{j=1}^{\infty}\int_{0}^{E_{t}} (\Phi(U_{s-})\lambda_{j}^{1/2}f_{j}) \mathrm{d}\langle W_{s},  \lambda_{j}^{1/2}f_{j}\rangle_{K_{Q}}, h\bigg\rangle_{H}\\
			&= \bigg\langle \int_{0}^{E_{t}}\Phi(U_{s-})\mathrm{d}W_{s}, h\bigg\rangle_{H}.
		\end{aligned}
	\end{eqnarray*}
The first equality follows from Definition~\ref{timechangstochinte} and the third equality follows from the second change of variable formula for real-valued stochastic integrals from~\cite{Kei2011}. 
\end{proof}

\subsection{Time-changed It\^o formula in Hilbert space}
The technique used to develop the time-changed It\^o formula in this section is inspired by the proof of the standard It\^o formula of Theorem 2.9 in \cite{Mandrecar2010}.
\begin{thm}(Time-changed It\^o formula)
Let $Q$ be a symmetric, nonnegative definite trace-class operator on a separable Hilbert space $K$, and let $\{W_{E_t}\}_{0\leq t\leq T}$ be a time-changed Q-Wiener process on a filtered probability space $(\Omega, \mathcal{G}, \{\mathcal{G}_{t}\}_{0\leq t\leq T}, \mathbb{P})$.  Assume that a stochastic process $X(t)$ is given by
\begin{eqnarray*}
	X(t)=X(0)+\int_0^t \psi(s)\mathrm{d}s + \int_0^t \gamma(s)\mathrm{d}E_s + \int_0^t \phi(s) \mathrm{d}W_{E_s},
\end{eqnarray*}
where $X(0)$ is a $\mathcal{G}_0$-measurable $H$-valued random variable, $\psi(s)$ and $\gamma(s)$ are H-valued $\mathcal{G}_s$-measurable $\mathbb{P}$-a.s. integrable processes on $[0,T]$ such that
\begin{eqnarray*}
	\int_0^T\|\psi(s)\|_H\mathrm{d}s < \infty ~~~~\text{and}~~~~\int_0^T\|\gamma(s)\|_H
	\mathrm{d}E_s< \infty,
\end{eqnarray*}
and $\phi \in \tilde{\Lambda}_2(K_Q, H)$.
Also assume that $F:H \rightarrow \mathbb{R}$ is continuous and its Fr\'echet derivatives $F_x: H \rightarrow \mathcal{L}(H, \mathbb{R})$ and $F_{xx}: H \rightarrow \mathcal{L}(H, \mathcal{L}(H,\mathbb{R}))$ are continuous and bounded on bounded subsets of $H$. Then,
\begin{eqnarray*}
	\begin{aligned}
		F(X(t)) &- F(X(0))=\int_0^t \big\langle F_x(X(s)), \psi(s)\big\rangle_H\mathrm{d}s\\
		&+ \int_0^{E_t}\big\langle F_x(X(U(s-))), \gamma(U(s-))\big\rangle_H\mathrm{d}s \\
		&+ \int_0^{E_t}\big\langle F_x(X(U(s-))), \phi(U(s-))\mathrm{d}W_{s}\big\rangle_H\\
		&+\frac{1}{2} \int_0^{E_t} tr(F_{xx}(X(U(s-)))(\phi(U(s-))Q^{1/2})(\phi(U(s-))Q^{1/2})^*)\mathrm{d}s,
	\end{aligned}
\end{eqnarray*}
$\mathbb{P}$-a.s. for all $t \in [0,T]$.
\end{thm}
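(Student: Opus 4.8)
The plan is to reduce the time-changed Itô formula to the classical (non-time-changed) Itô formula in Hilbert space by applying the change of variable formulas developed earlier. The key observation is that the process $X(t)$ can be expressed as a time-change of a process $Y(t)$ driven by the ordinary $Q$-Wiener process.

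First I would use the duality between the two integrals. Define the non-time-changed process
\begin{eqnarray*}
	Y(t) = X(0) + \int_0^t \gamma(U(s-))\mathrm{d}s + \int_0^t \phi(U(s-))\mathrm{d}W_s,
\end{eqnarray*}
and observe, via the second change of variable formula (Theorem~\ref{2ndcnhageformula}) applied to the stochastic term and the analogous ordinary change of variable formula applied to the $\mathrm{d}E_s$ term, that $Y(E_t)$ agrees with $X(t)$ up to the absolutely continuous drift $\int_0^t \psi(s)\mathrm{d}s$. More precisely, I would separate the $\mathrm{d}s$ drift (which lives on the ordinary clock) from the two terms that live on the $E$-clock, writing $X(t) = \int_0^t \psi(s)\mathrm{d}s + \tilde{Y}(E_t)$ where $\tilde{Y}(r) = X(0) + \int_0^r \gamma(U(s-))\mathrm{d}s + \int_0^r \phi(U(s-))\mathrm{d}W_s$.

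Next I would apply the \emph{standard} Itô formula in Hilbert space (Theorem 2.9 of~\cite{Mandrecar2010}) to $F$ evaluated along the ordinary-clock process. The hypotheses on $F$—continuity of $F$, $F_x$, $F_{xx}$ with the latter two bounded on bounded subsets—are exactly those required by that theorem, and $\phi(U(\cdot-)) \in \Lambda_2(K_Q, H)$ because $\phi \in \tilde{\Lambda}_2(K_Q,H)$ together with the second change of variable formula guarantees the requisite integrability. This produces an expansion of $F(\tilde{Y}(r))$ into a drift integral against $\mathrm{d}s$, a stochastic integral against $\mathrm{d}W_s$, and the quadratic-variation (trace) term against $\mathrm{d}s$, all on the ordinary clock. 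I would then evaluate this identity at $r = E_t$ and convert each integral back to the $t$-clock using Theorem~\ref{1stchangeformula} (for the stochastic term) and the ordinary chain rule for the Lebesgue–Stieltjes integrals, substituting $U(s-)$ via the inverse relationship between $U$ and $E$. Finally, the absolutely continuous drift $\int_0^t \psi(s)\mathrm{d}s$ contributes the term $\int_0^t \langle F_x(X(s)), \psi(s)\rangle_H \mathrm{d}s$ by the ordinary chain rule, since this piece moves on the real-time clock.

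\textbf{The main obstacle} will be handling the two different clocks simultaneously and justifying that the Itô correction term picks up $\mathrm{d}E_s$ (equivalently $\mathrm{d}s$ on the $E$-clock after the substitution) rather than $\mathrm{d}s$ on the real clock. This is precisely the point where the quadratic variation $\langle\langle W_E\rangle\rangle_t = QE_t$ from Proposition~\ref{increasingandquadracticprocess} enters: the pure-drift term $\int_0^t\psi(s)\mathrm{d}s$ is of bounded variation and contributes nothing to the quadratic variation, so the second-order term involves only $\phi$ and is integrated against $\mathrm{d}E_s$. I would need to argue carefully that the cross-variation between the real-time drift and the martingale part vanishes, and that the substitution $s \mapsto U(s-)$ correctly inverts the time-change almost surely (using right-continuity of $E$ and the fact that $U(E_t-) \leq t \leq U(E_t)$), so that the formula assembled on the $E$-clock re-expresses cleanly as integrals in the original variable $t$. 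A localization argument via the stopping times $T_n$ used in Theorem~\ref{martingaleoftimechagedQwiener}, together with the boundedness of $F_x$ and $F_{xx}$ on bounded sets, would remove any integrability concerns and let the dominated convergence theorem pass to the limit.
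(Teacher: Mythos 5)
There is a genuine gap in your reduction. Your plan hinges on writing $X(t)$ as (essentially) a time-change $\tilde{Y}(E_t)$ of an ordinary-clock process plus the real-clock drift $\int_0^t\psi(s)\mathrm{d}s$, applying the classical It\^o formula to $\tilde{Y}$ on the ordinary clock, and then letting the $\psi$-piece ``contribute its term by the chain rule.'' This does not work for nonlinear $F$: you need $F$ of the \emph{sum} $\int_0^t\psi(s)\mathrm{d}s+\tilde{Y}(E_t)$, and an expansion of $F(\tilde{Y}(r))$ evaluated at $r=E_t$ tells you nothing about $F(X(t))$. The natural repair --- setting $Y(r):=X(U(r-))$ and invoking the duality mechanism of Theorem~\ref{Duality} --- also fails here, for two reasons. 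First, on an interval where $E_t$ is flat (a jump of $U$) the integrals against $\mathrm{d}E_s$ and $\mathrm{d}W_{E_s}$ are constant but $\int_0^t\psi(s)\mathrm{d}s$ keeps moving, so $X$ is \emph{not} constant there and $X(t)\neq Y(E_t)$; the duality theorem is stated precisely for equations whose drift is $\mathrm{d}E_t$-driven, not $\mathrm{d}t$-driven, and the presence of $\psi$ is exactly what breaks it. Second, even setting that aside, $Y(r)$ would carry the bounded-variation term $\int_0^{U(r-)}\psi(s)\mathrm{d}s$, which is not absolutely continuous in $r$ (it is a Stieltjes integral against the jumping process $U$), so Theorem 2.9 of~\cite{Mandrecar2010} does not apply to it as stated.

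The paper avoids this entirely by proving the formula from scratch on the mixed clocks: after localizing and reducing to constant $\psi,\gamma,\phi$, it Taylor-expands the three-variable function $u(t,E_t,W_{E_t})=F(X(0)+\psi t+\gamma E_t+\phi W_{E_t})$ along a partition, treating the $\Delta t_j$, $\Delta E_j$, and $\Delta W_j$ increments separately; the second-order term is shown to converge to $\tfrac12\int_0^t tr[u_{xx}Q]\,\mathrm{d}E_s$ by computing conditional second and fourth moments of $\Delta W_j$ given the time change (this is where Proposition~\ref{increasingandquadracticprocess} is implicitly at work), and only at the very end are Theorems~\ref{1stchangeformula} and~\ref{2ndcnhageformula} used to rewrite the $\mathrm{d}E_s$ and $\mathrm{d}W_{E_s}$ integrals as integrals over $[0,E_t]$, while the $\int_0^t\langle F_x(X(s)),\psi(s)\rangle_H\mathrm{d}s$ term is left on the real clock. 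If you want to keep a reduction-style argument, you would need a genuine two-parameter (or semimartingale) It\^o formula that accommodates both clocks simultaneously --- which is, in substance, what the paper's direct Taylor-expansion proof supplies.
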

\begin{proof}
First, the desired It\^o formula is reduced to the case where $\psi(s)=\psi$, $\gamma(s)=\gamma$, and $\phi(s)=\phi$ are constant processes for $s \in [0,T]$.  Let $C>0$ be a constant, and define the stopping time
\begin{eqnarray*}
	\begin{aligned}
		\tau_C=\inf\bigg\{ t\in[0,T]: \max &\bigg(||X(t)||_H,\int_0^t ||\phi(s)||_H\mathrm{d}s, \int_0^t ||\gamma(s)||_H \mathrm{d}E_s, \\
		&\int_0^t ||\phi(s)||^2_{\mathcal{L}_2(K_Q,H)}\mathrm{d}E_s\bigg) \geq C \bigg\}.
	\end{aligned}
\end{eqnarray*}
Then, define $X_{C}(t)$ as
\begin{eqnarray*}
	X_C(t)=X_C(0)+\int_0^t\psi_C(s)\mathrm{d}s+\int_0^t\gamma_C(s)\mathrm{d}E_s+\int_0^t\phi_C(s) \mathrm{d}W_{E_s}, ~t\in[0,T],
\end{eqnarray*}
where $X_C(t)=X(t \wedge \tau_C), \psi_C(t)=\psi(t)1_{[0,\tau_C]}(t), \gamma_C(t)=\gamma(t)1_{[0,\tau_C]}(t)$, and $\phi_C(t)=\phi(t)1_{[0,\tau_C]}(t)$. It is enough to prove the It\^o formula for the processes stopped at $\tau_C$. Since
\begin{eqnarray*}
	\begin{aligned}
		\mathbb{P}\bigg(\int_0^T \|\psi_C(s)\|_H&\mathrm{d}s< \infty\bigg)=1,~~~\mathbb{P}\bigg(\int_0^T ||\gamma_C(s)||_H\mathrm{d}E_s< \infty\bigg)=1,~~\text{and}\\
		&\mathbb{E} \int_0^T ||\phi_C(s)||^2_{\mathcal{L}_{2}(K_Q,H)}\mathrm{d}E_s < \infty,
	\end{aligned}
\end{eqnarray*}
it follows that $\psi_C$, $\gamma_C$, and $\phi_C$ can be approximated respectively by sequences of bounded elementary processes $\psi_{C,n}$, $\gamma_{C,n}$, and $\phi_{C,n}$ such that as $n\rightarrow\infty$
\begin{eqnarray}\label{conditionsofprocesses}
	\begin{aligned}
		\int_0^t\|\psi_{C,n}(s) - &\psi_C(s)\|_H\mathrm{d}s \rightarrow 0,~~\int_0^t \|\gamma_{C,n}(s)-\gamma_C(s)\|_H\mathrm{d}E_s\rightarrow 0,~~\text{and}\\
		&\left\|\int_0^t \phi_{C,n}(s) dW_{E_s}- \int_0^t \phi_C(s)\mathrm{d}W_{E_s}\right\|_H \rightarrow 0,~~\mathbb{P}-\textrm{a.s.}
	\end{aligned}
\end{eqnarray}
uniformly in $t\in[0, T]$. Let
\begin{eqnarray*}
	X_{C,n}(t)=X(0)+\int_0^t \psi_{C,n}(s)\mathrm{d}s +\int_0^t \gamma_{C,n}(s)\mathrm{d}E_s +  \int_0^t \phi_{C,n}(s)\mathrm{d}W_{E_s}.
\end{eqnarray*}
Then, as $n\to\infty$
\begin{eqnarray}\label{almostsureestimation}
	\sup_{t\leq T}\|X_{C,n}(t)-X_C(t)\|_H \rightarrow 0,~\mathbb{P}-\textrm{a.s.}.
\end{eqnarray}
Assume the It\^o formula for $X_{C,n}(t)$ holds $\mathbb{P}$-a.s. for all $t\in[0, T]$, i.e.,
\begin{eqnarray}\label{itoformulaforelementaryprocess}
  \begin{aligned}
    F(&X_{C, n}(t)) - F(X(0)) = \int_0^t \langle F_x(X_{C, n}(s)),\phi_{C, n}(s)\mathrm{d}W_{E_s}\rangle_H\\
    &+\int_0^t \langle F_x(X_{C, n}(s)), \psi_{C, n}(s)\rangle_H\mathrm{d}s
    + \int_0^t \langle F_x(X_{C, n}(s)), \gamma_{C, n}(s)\rangle_H\mathrm{d}E_s \\
    &+ \int_{0}^{t}\frac{1}{2} tr[F_{xx}(X_{C, n}(s))(\phi_{C, n}(s)Q^{1/2})(\phi_{C, n}(s)Q^{1/2})^*]\mathrm{d}E_s\\
    &:= I_{C, n}^{1} + I_{C, n}^{2} + I_{C, n}^{3} + I_{C, n}^{4}.
  \end{aligned}
\end{eqnarray}
By using the continuity of $F$ and the continuity and local boundedness of $F_x$ and $F_{xx}$, it will suffice to show that the following holds $\mathbb{P}$-a.s. for all $t\in[0, T]$:
\begin{eqnarray}\label{itoformulaofstoppedprocess}
  \begin{aligned}
  F(&X_C(t)) - F(X(0)) = \int_0^t \langle F_x(X_C(s)),\phi_C(s)\mathrm{d}W_{E_s}\rangle_H \\
  &+\int_0^t \langle F_x(X_C(s)), \psi_C(s) \rangle_H\mathrm{d}s
  + \int_0^t \langle F_x(X_C(s)), \gamma_C(s) \rangle_H\mathrm{d}E_s\\
  &+ \frac{1}{2} \int_{0}^{t}tr[F_{xx}(X_C(s))(\phi_C(s)Q^{1/2})(\phi_C(s)Q^{1/2})^*]\mathrm{d}E_s\\
  &:= I_{C}^{1} + I_{C}^{2} + I_{C}^{3} + I_{C}^{4}.
  \end{aligned}
\end{eqnarray}
Consider, term by term, the difference between both sides of~\eqref{itoformulaforelementaryprocess} and~\eqref{itoformulaofstoppedprocess}.
Due to the continuity of $F$ and almost sure convergence in~\eqref{almostsureestimation}, the left hand side of~\eqref{itoformulaforelementaryprocess} converges to the left hand side of~\eqref{itoformulaofstoppedprocess} $\mathbb{P}$-a.s for all $t\leq T$, i.e.,
\begin{eqnarray}\label{almostconvergenceofLHS}
	F(X_{C,n}(t)) \rightarrow F(X_C(t)),~~\mathbb{P}-\textrm{a.s. as}~n\to\infty.
\end{eqnarray}
Turn to the first terms in both right hand sides of~\eqref{itoformulaforelementaryprocess} and~\eqref{itoformulaofstoppedprocess}, 
\begin{eqnarray*}
	\begin{aligned}
		\mathbb{E}|I_{C,n}^{1} - &I_{C}^{1}|^{2}
		= \mathbb{E}\bigg|\int_0^t\bigg( \phi_{C,n}^{*}(s)F_x(X_{C,n}(s)) -  \phi_{C}^{*}(s)F_x(X_{C}(s))\bigg)\mathrm{d}W_{E_s}\bigg|^2 \\
        &\leq 2 \mathbb{E} \int_0^t \|(\phi^*_{C,n}(s) - \phi_C^*(s))F_x(X_{C,n}(s))\|^2_{\mathcal{L}_2(K_Q, R)}\mathrm{d}E_s\\
		&\quad+ 2\mathbb{E} \int_0^t \|\phi_C^*(s)(F_x(X_{C,n}(s))-F_x(X_C(s)))\|^2_{\mathcal{L}_{2}(K_Q, R)}\mathrm{d}E_s\\
		&\leq 2 \mathbb{E} \int_0^t \bigg(\|\phi_C^*(s)-\phi^*_{C,n}\|^2_{\mathcal{L}_{2}(K_Q,H)} \|F_x(X_{C,n}(s)\|^2_H\bigg)\mathrm{d}E_s\\
		&\quad+ 2 \mathbb{E} \int_0^t \bigg(\|\phi_C^*(s)\|^2_{\mathcal{L}_{2}(K_Q,H)} \|F_x(X_{C,n}(s))-F_x(X_C(s))\|^2_{H}\bigg) \mathrm{d}E_s\\
		&:= J_{1} + J_{2},
	\end{aligned}
\end{eqnarray*}
where $\int_0^t \beta^*(s)\alpha(s)\mathrm{d}W_{E_s} := \int_0^t \langle \alpha(s),\beta(s)\mathrm{d}W_{E_s} \rangle_H $ and $\beta^{*}(s)$ is the adjoint operator of $\beta(s)$. Since $F_{x}$ is bounded on bounded subsets of $H$, there exists an $M > 0$ such that   
\begin{eqnarray*}
	J_{1} \leq M\mathbb{E}\int\limits_{0}^{t}\|\phi_C^*(s) - \phi^*_{C,n}\|^2_{\mathcal{L}^2(K_Q,H)}\mathrm{d}E_{s} \rightarrow 0, ~\text{as}~ n\to\infty.
\end{eqnarray*}  
Since $\phi_{C}(s)$ is square integrable in the space $\tilde{\Lambda}_{2}(K_{Q}, H)$ and $F_{x}$ is bounded in $H$, $J_{2}\rightarrow 0$ by applying the Lebesgue dominated convergence theorem. So, $I_{C, n}^{1}$ converges to $I_{C}^{1}$ in mean square, i.e.,
\begin{eqnarray}\label{meansquareconvergenceoffirstterm}
	\mathbb{E}|I_{C, n}^{1} - I_{C}^{1}|^{2} \rightarrow 0,
\end{eqnarray}
and thus converges in probability. For the second terms, $I_{C, n}^{2}$ and $I^{2}_{C}$, the RHSs of~\eqref{itoformulaforelementaryprocess} and~\eqref{itoformulaofstoppedprocess}, applying the conditions of~\eqref{conditionsofprocesses} and~\eqref{almostsureestimation} leads to
\begin{eqnarray}\label{almostconvergenceof2ndterm}
	\begin{aligned}
		I_{C, n}^{2} - I_{C}^{2} 
		&= \int_0^t\bigg(\bigg\langle F_x(X_{C,n}(s))- F_x(X_C(s)),\psi_{C,n}(s)\bigg\rangle_H\\
		&~~~ + \bigg\langle F_x(X_C(s)),\psi_{C,n}(s)-\psi_C(s)\bigg\rangle_H\bigg)\mathrm{d}s \rightarrow 0,~~\mathbb{P}-\textrm{a.s.}.	
	\end{aligned}
\end{eqnarray}
Similarly, for the third terms, $I_{C, n}^{3}$ and $I^{3}_{C}$, the RHSs of~\eqref{itoformulaforelementaryprocess} and~\eqref{itoformulaofstoppedprocess}, 
\begin{eqnarray}\label{almostconvergenceof3rdterm}
     \begin{aligned}
        I_{C, n}^{3} - I_{C}^{3} 
        &= \int_0^t\bigg(\bigg\langle F_x(X_{C,n}(s))- F_x(X_C(s)),\gamma_{C,n}(s)\bigg\rangle_H\\
        &~~~ + \bigg\langle F_x(X_C(s)), \gamma_{C,n}(s) - \gamma_C(s)\bigg\rangle_H\bigg)\mathrm{d}E_{s}\rightarrow 0,~~\mathbb{P}-\textrm{a.s.}.	
    \end{aligned}
\end{eqnarray}
Before proceeding to the fourth terms, $I_{C, n}^{4}$ and $I^{4}_{C}$, of RHSs of~\eqref{itoformulaforelementaryprocess} and~\eqref{itoformulaofstoppedprocess}, note that 
\begin{eqnarray*}
	\|\phi_{C,n}(s)-\phi_{C}(s)\|_{\tilde{\Lambda}_2(K_Q,H)} \rightarrow 0,
\end{eqnarray*}
which means there exists a subsequence $n_k$ such that for all $s\leq T$
\begin{eqnarray*}
	\|\phi_{C,n_k}(s)-\phi_{C}(s)||_{\mathcal{L}_{2}(K_Q,H)} \rightarrow 0,~~\mathbb{P}-\textrm{a.s.}.
\end{eqnarray*}
Thus, for the eigenvectors $\{f_{j}\}_{j=1}^{\infty}$ of $Q$ and all $t\leq T$,
\begin{eqnarray}\label{phicon}
	||\phi_{C,n_k}(s)f_j-\phi_{C}(s)f_j||_H \rightarrow 0,~\mathbb{P}-\text{a.s.}
\end{eqnarray}
On the other hand, for the ONB, $\{f_{j}\}_{j=1}^{\infty}$, in the  Hilbert space $K$, 
\begin{eqnarray*}
	\begin{aligned}
		tr(F_{xx}&(X_{C,n_k}(s))\phi_{C, n_k}(s)Q\phi^*_{C,n_k}(s)) = tr(\phi^*_{C,n_k}(s)F_{xx}(X_{C,n_k}(s))\phi_{C,n_k}(s)Q)\\
		&= \sum_{j=1}^{\infty} \lambda_j \langle F_{xx}(X_{C,n_k}(s))\phi_{C,n_k}(s)f_j,\phi_{C,n_k}(s)f_j \rangle_H,
	\end{aligned}
\end{eqnarray*}
where $\lambda_{j}$ is the eigenvalue associated with eigenvector $f_{j}$ of $Q$.
Since $X_{C,n_k}(s)$ is bounded and $F_{xx}$ is continuous,~\eqref{phicon} implies that for $s\leq T$
\begin{eqnarray*}
	\begin{aligned}
	  \langle F_{xx}(X_{C,n_k}(s))&\phi_{C,n_k}(s)f_j,\phi_{C,n_k}(s)f_j\rangle_H\\
	  &\rightarrow \langle F_{xx}(X_C(s))\phi_C(s)f_j, \phi_C(s)f_j \rangle_H,~\mathbb{P}-a.s..
	\end{aligned}
\end{eqnarray*}
By the Lebesgue Dominated Convergence Theorem (DCT) (with respect to the counting measure), it holds a.e. on $[0, T]\times\Omega$ that
\begin{eqnarray}\label{convergenceoftraceterm}
    \begin{aligned}
    	tr( F_{xx}(X_{C,n_k}(s))&\phi_{C,n_k}(s)Q\phi^*_{C,n_k}(s))\\
    	&\rightarrow tr(F_{xx}(X_C(s))\phi_C(s)Q\phi^{*}_{C}(s)).
    \end{aligned}
\end{eqnarray}
Moreover, the left hand side of~\eqref{convergenceoftraceterm} is bounded above by 
\begin{eqnarray*}
 \eta_n(s) := \|F_{xx}(X_{C,n_k}(s))\|_{\mathcal{L}(H)}\|\phi_{C,n_k}\|^2_{\tilde{\Lambda}_2(K_Q,H)},
\end{eqnarray*}
and a.e. on $[0, T]\times\Omega$
\begin{eqnarray*}
	\eta_n(s)\rightarrow\eta(s)=\|F_{xx}(X_{C}(s))\|_{\mathcal{L}(H)}\|\phi_{C}\|^2_{\tilde{\Lambda}_2(K_Q,H)}.
\end{eqnarray*}
So, by the boundedness of $F_{xx}$, $\int_0^t \eta_n(s) dE_s \rightarrow \int_0^t \eta(s) dE_s$ and by the general Lebesgue DCT, it holds $\mathbb{P}$-a.s. that for $t\leq T$
\begin{eqnarray}\label{almostconvergenceoftraceterm}
	\begin{aligned}
	I_{C, n_{k}}^{4} - I_{C}^{4} &= \int_{0}^{t}\frac{1}{2} tr[F_{xx}(X_{C,n_k}(s))(\phi_{C,n_k}(s)Q^{1/2})(\phi_{C,n_k}(s)Q^{1/2})^*]\mathrm{d}E_s \\
		&- \int_{0}^{t}\frac{1}{2} tr[F_{xx}(X_C(s))(\phi_C(s)Q^{1/2})(\phi_C(s)Q^{1/2})^*]\mathrm{d}E_s\rightarrow 0.	
	\end{aligned}
\end{eqnarray}
Therefore, from~\eqref{almostconvergenceofLHS},~\eqref{meansquareconvergenceoffirstterm},~\eqref{almostconvergenceof2ndterm},~\eqref{almostconvergenceof3rdterm} and~\eqref{almostconvergenceoftraceterm}, the It\^o formula for the process $X_{C,n}(t)$,~\eqref{itoformulaforelementaryprocess}, converges in probability to the It\^o formula for the process $X_{C}(t)$,~\eqref{itoformulaofstoppedprocess}, and possibly for a subsequence, $n_{k}$, converges $\mathbb{P}$-a.s.

Second, the proof can be reduced to the case where 
\begin{eqnarray*}
	X(t)=X(0)+\psi t +\gamma E_t + \phi W_{E_t}
\end{eqnarray*}
where $\psi$,$\gamma$, and $\phi$ are $\mathcal{G}_0$-measurable random variables independent of $t$. Define the function $u(t_{1}, t_{2}, x): \mathbb{R}_{+}\times\mathbb{R}_{+}\times H\to \mathbb{R}$ as
\begin{eqnarray*}
	u(t, E_{t}, W_{E_t}) = F(X(0)+\psi t +\gamma E_t + \phi W_{E_t})=F(X(t)).
\end{eqnarray*} 
Now, we prove that the It\^o formula holds for the function $u(t_{1}, t_{2}, x)$.
First, let $0=t_1 < t_2 < ... < t_n =t \leq T$ be a partition of an interval $[0,t]$, then
\begin{eqnarray*}
	\begin{aligned}
		u(t, E_{t}, W_{E_t}) - u(0, 0, 0)
		&= \sum_{j=1}^{n-1} [u(t_{j+1}, E_{t_{j+1}}, W_{E_{t_{j+1}}})-u(t_{j}, E_{t_{j+1}}, W_{E_{t_j+1}})]\\ 
		&\quad+\sum_{j=1}^{n-1} [u(t_{j}, E_{t_{j+1}}, W_{E_{t_{j+1}}})-u(t_{j}, E_{t_{j}}, W_{E_{t_j+1}})]\\
		&\quad +\sum_{j=1}^{n-1} [u(t_{j}, E_{t_{j}}, W_{E_{t_{j+1}}}) - u(t_{j}, E_{t_{j}}, W_{E_{t_j}})].\\
	\end{aligned}
\end{eqnarray*}
Also, let $\Delta t_{j} = t_{j+1} - t_{j}$, $\Delta E_j= E_{t_{j+1}}-E_{t_j}$ and $\Delta W_j= W_{E_{t_{j+1}}}-W_{E_{t_j}}$. Let $\theta_j \in [0,1]$ be a random variable, and $\bar{t}_j = t_{j} + \theta_j (t_{j+1} - t_{j})$, $\bar{E}_j= E_{t_j} + \theta_j (E_{t_{j+1}} - E_{t_j})$ and $\bar{W}_j=W_{E_{t_j}}+\theta_j (W_{E_{t_{j+1}}}-W_{E_{t_j}})$.  Using Taylor's formula,
\begin{eqnarray}\label{taylorexpansion}
   \begin{aligned}
     &u(t, E_{t}, W_{E_t}) - u(0, 0, 0)\\ 
     &= \sum_{j=1}^{n-1}u_{t_{1}}(\bar{t}_{j}, E_{t_{j+1}}, W_{E_{t_{j+1}}})\Delta t_{j} + \sum_{j=1}^{n-1}u_{t_{2}}(t_{j}, \bar{E}_{j}, W_{E_{t_{j+1}}})\Delta E_{j}\\
     &+\sum_{j=1}^{n-1}[\langle u_x(t_{j}, E_{t_{j}}, W_{E_{t_j}}), \Delta W_j\rangle_K + \frac{1}{2}\langle u_{xx}(t_{j}, E_{t_{j}}, \bar{W}_j)(\Delta W_j), \Delta W_j\rangle_K]\\
     &=\sum_{j=1}^{n-1}u_{t_{1}}(t_{j}, E_{t_{j+1}}, W_{E_{t_{j+1}}})\Delta t_{j}\\ 
     &+ \sum_{j=1}^{n-1}u_{t_{2}}(t_{j}, E_{t_{j}}, W_{E_{t_{j+1}}})\Delta E_{j}
     + \sum_{j=1}^{n-1}\langle u_x(t_{j}, E_{t_{j}}, W_{E_{t_j}}), \Delta W_j\rangle_K\\ &+\frac{1}{2} \sum_{j=1}^{n-1}\langle u_{xx}(t_{j}, E_{t_{j}}, W_{E_{t_j}})(\Delta W_j), \Delta W_j \rangle_K\\
     &+\sum_{j=1}^{n-1}[u_{t_{1}}(\bar{t}_{j}, E_{t_{j+1}}, W_{E_{t_{j+1}}}) - u_{t_{1}}(t_{j}, E_{t_{j+1}}, W_{E_{t_{j+1}}})]\Delta t_{j}\\
     &+\sum_{j=1}^{n-1}[u_{t_{2}}(t_{j}, \bar{E}_{j+1}, W_{E_{t_{j+1}}}) - u_{t_{2}}(t_{j}, E_{t_{j}}, W_{E_{t_{j+1}}})]\Delta E_{j}\\
     &+ \frac{1}{2} \sum_{j=1}^{n-1} \langle[u_{xx}(t_{j}, E_{t_{j}}, \bar{W}_j)(\Delta W_j)- u_{xx}(t_{j}, E_{t_{j}}, W_{E_{t_j}})(\Delta W_j)], \Delta W_j\rangle_K\\
     &:= I_{1} + I_{2} + I_{3} + I_{4} + I_{5} + I_{6} + I_{7}.
   \end{aligned}
\end{eqnarray}
By the uniform continuity of the mappings:
\begin{eqnarray*}
	\begin{aligned}
		&[0, T]\times [0, T]\times [0, T]\ni (t, s, r) \to u_{t_{1}}(t, E_{s}, W_{E_{r}})\in\mathbb{R}\\
		&[0, T]\times [0, T]\times [0, T]\ni (t, s, r) \to u_{t_{2}}(t, E_{s}, W_{E_{r}})\in\mathbb{R}
	\end{aligned}
\end{eqnarray*} 
and the continuity of the map $[0,T] \ni t \rightarrow u_x(t, E_{t}, W_{E_t}) \in K^*$, the following holds~$\mathbb{P}$-a.s.
\begin{eqnarray}\label{almostsureconvergenceofirstterminItoformula}
\begin{aligned}
&I_{1} = \sum_{j=1}^{n-1}u_{t_{1}}(t_{j}, E_{j+1}, W_{E_{t_{j+1}}})\Delta t_{j} \to \int_{0}^{t}u_{t_{1}}(s, E_{s}, W_{E_{s}})\mathrm{d}s,\\
&I_{2} = \sum_{j=1}^{n-1}u_{t_{2}}(t_{j}, E_{j}, W_{E_{t_{j+1}}})\Delta E_{t_{j}} \to \int_{0}^{t}u_{t_{2}}(s, E_{s}, W_{E_{s}})\mathrm{d}E_{s},\\
&I_{3} = \sum_{j=1}^{n-1}\langle u_x(s, E_{s}, W_{E_{t_j}}), \Delta W_j\rangle_K \rightarrow \int_0^t\langle u_x(s, E_{s}, W_{E_{s}}), \mathrm{d}W_{E_s}\rangle_K.
\end{aligned}
\end{eqnarray}
Also since the time change $E_{t}$ has bounded variation, 
\begin{eqnarray}\label{convergenceofintegralswithrespecttotimetandtimechangeE_{t}}
\begin{aligned}
&|I_{5}| = \bigg|\sum_{j=1}^{n-1}[u_{t_{1}}(\bar{t}_{j}, E_{t_{j+1}}, W_{E_{t_{j+1}}}) - u_{t_{1}}(t_{j}, E_{t_{j+1}}, W_{E_{t_{j+1}}})]\Delta t_{j}\bigg|\\
&\leq T\sup_{j\leq n-1}\bigg|u_{t_{1}}(\bar{t}_{j}, E_{t_{j+1}}, W_{E_{t_{j+1}}}) - u_{t_{1}}(t_{j}, E_{t_{j+1}}, W_{E_{t_{j+1}}})\bigg|\\
&\to 0,\\
&|I_{6}| = \bigg|\sum_{j=1}^{n-1}[u_{t_{2}}(t_{j}, \bar{E}_{j+1}, W_{E_{t_{j+1}}}) - u_{t_{2}}(t_{j}, E_{t_{j}}, W_{E_{t_{j+1}}})]\Delta E_{j}\bigg|\\
&\leq\sum_{j=1}^{n-1}|\Delta E_{t_{j}}|\sup_{j\leq n-1}\bigg|u_{t_{2}}(t_{j}, \bar{E}_{j+1}, W_{E_{t_{j+1}}}) - u_{t_{2}}(t_{j}, E_{t_{j}}, W_{E_{t_{j+1}}})\bigg|\\
&\to 0.
\end{aligned}
\end{eqnarray}
Similarly, by the continuity of the map $K\ni x\rightarrow u_{xx}(t_{1}, t_{2}, x)\in\mathcal{L}(K, K)$, 
\begin{eqnarray}\label{almostsureconvergenceofthirdterminItoformula}
	\begin{aligned}
		|I_{7}| &= \frac{1}{2}\bigg|\sum_{j=1}^{n-1}\langle [u_{xx}(t_{j}, E_{t_{j}}, \bar{W}_j)(\Delta W_j)\\ 
		&\qquad\qquad\qquad\qquad- u_{xx}(t_{j}, E_{t_{j}}, W_{E_{t_j}})(\Delta W_j)], \Delta W_j \rangle_K\bigg|\\
		&\leq \sup_{j\leq n-1} ||u_{xx}(t_{j}, E_{t_{j}}, \bar{W}_j)(\Delta W_j)- u_{xx}(t_{j}, E_{t_{j}}, W_{E_{t_j}})(\Delta W_j)||_{\mathcal{L}(K,K)}\\
		&\quad\times \sum_{j=1}^{n-1}||\Delta W_j||_K^2 \rightarrow 0
	\end{aligned}
\end{eqnarray}
with probability one as $n \rightarrow \infty$ since the function $u$ has the same smoothness as $F$ and $\bar{W}_j \rightarrow W_{E_{t_j}}$ as the increments $t_{j+1}-t_j$ get smaller. It remains to deal with the fourth term, $I_{4}$. Let $1_j^N=1_{\{\max \{\|W_{E_{t_i}}\|_{K} \leq N,~i \leq j\} \} }$ which is $\mathcal{G}_{t_j}$-measurable. To handle $I_{4}$, the following computations are helpful. First, 
\begin{eqnarray}\label{secondderivativeconditionalexpectation}
\begin{aligned}
&\mathbb{E}\bigg(\langle 1_j^N u_{xx}(t_{j}, E_{t_{j}}, W_{E_{t_j}})(\Delta W_j), \Delta W_j \rangle_K \bigg| \mathcal{G}_{t_j}\bigg)\\
&=\mathbb{E}\bigg(\bigg\langle 1_j^N u_{xx}(t_{j}, E_{t_{j}}, W_{E_{t_j}}) \sum_{k=1}^{\infty} \lambda_k^{1/2} (w_k(E_{t_{j+1}})-w_k(E_{t_j}))f_k,\\
&\qquad\qquad\qquad \sum_{\l=1}^{\infty} \lambda_{\l}^{1/2} (w_{\l}(E_{t_{j+1}})-w_{\l}(E_{t_j}))f_{\l} \bigg\rangle_K \bigg| \mathcal{G}_{t_j}\bigg)\\
&=\sum_{k=1}^{\infty} \mathbb{E}\bigg( \lambda_k\langle 1_j^N u_{xx}(t_{j}, E_{t_{j}}, W_{E_{t_j}}) f_k, f_k \rangle_K  (w_k(E_{t_{j+1}})-w_k(E_{t_j}))^2 \bigg| \mathcal{G}_{t_j}\bigg)\\
&= tr(1_j^N u_{xx}(t_{j}, E_{t_{j}}, W_{E_{t_j}})Q) \Delta E_j.
\end{aligned}
\end{eqnarray}
Second, for the cross term arising in the computation below, without loss of generality, assume $i < j$. Then,
\begin{eqnarray*}
	\begin{aligned}
		I^{N}&:=\mathbb{E}\bigg\{\bigg(\big\langle 1_i^N u_{xx}(t_{j}, E_{t_{j}}, W_{E_{t_i}})(\Delta W_i), \Delta W_i\big\rangle_K\\ &\qquad\qquad\qquad\qquad- tr\big(1_i^N u_{xx}(t_{j}, E_{t_{j}}, W_{E_{t_i}})Q\big)\Delta E_i\bigg)\\
		&\qquad\times\bigg(\big\langle 1_j^N u_{xx}(t_{j}, E_{t_{j}}, W_{E_{t_j}})(\Delta W_j), \Delta W_j\big\rangle_K\\ &\qquad\qquad\qquad\qquad- tr\big(1_j^N u_{xx}(t_{j}, E_{t_j}, W_{E_{t_j}})Q\big)\Delta E_j\bigg) \bigg\}\\
	    &= \mathbb{E}\bigg\{\bigg(\big\langle 1_i^N u_{xx}(t_{j}, E_{t_{j}}, W_{E_{t_i}})(\Delta W_i), \Delta W_i\big\rangle_K\\ &\qquad\qquad\qquad\qquad- tr\big(1_i^N u_{xx}(t_{j}, E_{t_{j}}, W_{E_{t_i}})Q\big)\Delta E_i\bigg)\\
	    &\qquad\times\mathbb{E}\bigg(\big\langle 1_j^N u_{xx}(t_{j}, E_{t_{j}}, W_{E_{t_j}})(\Delta W_j), \Delta W_j\big\rangle_K\\ 
	    &\qquad\qquad\qquad\qquad - tr\big(1_j^N u_{xx}(t_{j}, E_{t_{j}}, W_{E_{t_j}})Q\big)\Delta E_j\bigg|\mathcal{G}_{t_{(i+1)}}\bigg) \bigg\}.
	\end{aligned}
\end{eqnarray*}
Applying the tower property of conditional expectation and~\eqref{secondderivativeconditionalexpectation} yields
\begin{eqnarray}\label{expectationofcorsstermwithdifferentindexes}
\begin{aligned}
	    I^{N}&= \mathbb{E}\bigg\{\bigg(\big\langle 1_i^N u_{xx}(t_{j}, E_{t_{j}}, W_{E_{t_i}})(\Delta W_i), \Delta W_i\big\rangle_K\\ &\qquad\qquad\qquad\qquad- tr\big(1_i^N u_{xx}(t_{j}, E_{t_{j}}, W_{E_{t_i}})Q\big)\Delta E_i\bigg)\\
	    &\qquad\times\mathbb{E}\bigg(\mathbb{E}\big(\big\langle 1_j^N u_{xx}(t_{j}, E_{t_{j}}, W_{E_{t_j}})(\Delta W_j), \Delta W_j\big\rangle_K|\mathcal{G}_{t_{j}}\big)\\ &\qquad\qquad\qquad\qquad- tr\big(1_j^N u_{xx}(t_{j}, E_{t_{j}}, W_{E_{t_j}})Q\big)\Delta E_j\bigg|\mathcal{G}_{t_{(i+1)}}\bigg) \bigg\}\\
	    &= \mathbb{E}\bigg\{\bigg(\big\langle 1_i^N u_{xx}(t_{j}, E_{t_{j}}, W_{E_{t_i}})(\Delta W_i), \Delta W_i\big\rangle_K\\ &\qquad\qquad\qquad\qquad- tr\big(1_i^N u_{xx}(t_{j}, E_{t_{j}}, W_{E_{t_i}})Q\big)\Delta E_i\bigg)\\
	    &\qquad\times\mathbb{E}\bigg(\mathbb{E}\big(tr\big(1_j^N u_{xx}(t_{j}, E_{t_{j}}, W_{E_{t_j}})Q\big)\Delta E_j\\ &\qquad\qquad\qquad\qquad- tr\big(1_j^N u_{xx}(t_{j}, E_{t_{j}}, W_{E_{t_j}})Q\big)\Delta E_j\bigg|\mathcal{G}_{t_{(i+1)}}\bigg) \bigg\}\\	   
		&= 0.
	\end{aligned}
\end{eqnarray}
Third, let $f_{E_{t_{j+1}}, E_{t_{j}}}(\tau_{1}, \tau_{2})$ be the joint density function of random variables, $E_{t_{j+1}}$ and $E_{t_{j}}$. Then, for $t_{j+1} > t_{j}$, letting $\mathbb{D} =\{(\tau_{1}, \tau_{2})\in \mathbb{R}_{+}\times\mathbb{R}_{+}: 0\leq\tau_{2}\leq\tau_{1}\}$,
\begin{eqnarray}\label{expectationofcrossterminproduct}
\begin{aligned}
&\quad\mathbb{E}\bigg(\big\langle 1_j^N u_{xx}(t_{j}, E_{t_{j}}, W_{E_{t_j}})(\Delta W_j), \Delta W_j\big\rangle_K\\ 
&\qquad\qquad\qquad\qquad\qquad\qquad\times tr\big(1_j^N u_{xx}(t_{j}, E_{t_{j}}, W_{E_{t_j}})Q\big)\Delta E_j
\bigg)\\
&= \iint_{\mathbb{D}}\mathbb{E}\bigg(\big\langle 1_j^N u_{xx}(t_{j}, E_{t_{j}}, W_{E_{t_j}})(\Delta W_j), \Delta W_j\big\rangle_K\\ 
&\qquad\qquad\qquad\qquad\qquad\qquad\times tr\big(1_j^N u_{xx}(t_{j}, E_{t_{j}}, W_{E_{t_j}})Q\big)\Delta E_j\\
&\qquad\qquad\qquad\qquad\qquad\bigg|E_{t_{j+1}}=\tau_{1}, E_{t_{j}}=\tau_{2}\bigg)f_{E_{t_{j+1}}, E_{t_{j}}}(\tau_{1}, \tau_{2})\mathrm{d}(\tau_{1}, \tau_{2})\\
&= \iint_{\mathbb{D}}\mathbb{E}\bigg(tr\big(1_j^N u_{xx}(t_{j}, \tau_{2}, W_{\tau_{2}})Q\big)^{2}(\tau_{1} - \tau_{2})^{2}\bigg)f_{E_{t_{j+1}}, E_{t_{j}}}(\tau_{1}, \tau_{2})\mathrm{d}(\tau_{1}, \tau_{2})\\
&= \mathbb{E}\bigg(tr\big(1_j^N u_{xx}(t_{j}, E_{t_{j}}, W_{E_{t_{j}}})Q\big)^{2}(\Delta E_{t_{j}})^{2}\bigg),
\end{aligned}
\end{eqnarray}
Finally, still for $t_{j+1} > t_{j}:$
\begin{eqnarray}\label{expectationof4thmoment}
\begin{aligned}
\mathbb{E}&\|\Delta W_{j}\|_{K}^{4} = \mathbb{E}\|W_{E_{t_{j+1}}} - W_{E_{t_{j}}}\|_{K}^{4}\\ 
&= \iint_{\mathbb{D}}\mathbb{E}\bigg(\|W_{\tau_{1}} - W_{\tau_{2}}\|_{K}^{4}\bigg|E_{t_{j+1}}=\tau_{1}, E_{t_{j}}=\tau_{2}\bigg)\\ 
&\qquad\qquad\qquad\qquad\qquad\qquad\qquad\times f_{E_{t_{j+1}}, E_{t_{j}}}(\tau_{1}, \tau_{2})\mathrm{d}(\tau_{1}, \tau_{2})\\
&= \iint_{\mathbb{D}}3(trQ)^{2}(\tau_{1} - \tau_{2})^{2}f_{E_{t_{j+1}}, E_{t_{j}}}(\tau_{1}, \tau_{2})\mathrm{d}(\tau_{1}, \tau_{2})\\
&= 3(trQ)^{2}\mathbb{E}(E_{t_{j+1}} - E_{t_{j}})^{2} = 3(trQ)^{2}\mathbb{E}(\Delta E_{j})^{2}.
\end{aligned}
\end{eqnarray}
Noting that $u_{xx}$ is bounded on bounded subsets of $H$, apply~\eqref{secondderivativeconditionalexpectation},~\eqref{expectationofcorsstermwithdifferentindexes}, \eqref{expectationofcrossterminproduct} and~\eqref{expectationof4thmoment} to yield
\begin{eqnarray*}
	\begin{aligned}
		&\mathbb{E}\bigg(\sum_{j=1}^{n-1}\big( \big\langle 1_j^N u_{xx}(t_{j}, E_{t_{j}}, W_{E_{t_j}})(\Delta W_j), \Delta W_j\big\rangle_K\qquad\qquad\qquad\qquad\qquad\qquad\qquad\qquad\\ &\qquad\qquad\qquad\qquad\qquad\qquad- tr\big(1_j^N u_{xx}(t_{j}, E_{t_{j}}, W_{E_{t_j}})Q\big) \Delta E_j\big)\bigg)^2\\
		&= \sum_{j=1}^{n-1}\mathbb{E}\bigg( \big\langle 1_j^N u_{xx}(t_{j}, E_{t_{j}}, W_{E_{t_j}})(\Delta W_j), \Delta W_j\big\rangle_K\\ &\qquad\qquad\qquad\qquad\qquad\qquad- tr\big(1_j^N u_{xx}(t_{j}, E_{t_{j}}, W_{E_{t_j}})Q\big) \Delta E_j\bigg)^2\\
	\end{aligned}
\end{eqnarray*}
\begin{eqnarray*}
	\begin{aligned}
		&\qquad\qquad + \sum_{i\neq j=1}^{n-1}\mathbb{E}\bigg\{\bigg( \big\langle 1_i^N u_{xx}(t_{i}, E_{t_{i}}, W_{E_{t_i}})(\Delta W_i), \Delta W_i\big\rangle_K\\ &\qquad\qquad\qquad\qquad\qquad\qquad\quad- tr\big(1_i^N u_{xx}(t_{i}, E_{t_{i}}, W_{E_{t_i}})Q\big) \Delta E_i\bigg)\\
		&\qquad\qquad\times \bigg( \big\langle 1_j^N u_{xx}(t_{j}, E_{t_{j}}, W_{E_{t_j}})(\Delta W_j), \Delta W_j\big\rangle_K\\ &\qquad\qquad\qquad\qquad\qquad\qquad\quad- tr\big(1_j^N u_{xx}(t_{j}, E_{t_{j}}, W_{E_{t_j}})Q\big) \Delta E_j\bigg)\bigg\}\\
		&\quad=\sum_{j=1}^{n-1}\bigg\{\mathbb{E}\big\langle 1_j^N u_{xx}(t_{j}, E_{t_{j}}, W_{E_{t_j}})(\Delta W_j), \Delta W_j\big\rangle_K^2\\ &\qquad\qquad\qquad\qquad\qquad\qquad\quad  - \mathbb{E}\bigg(tr(1_j^N u_{xx}(t_{j}, E_{t_{j}}, W_{E_{t_j}})Q)^2 (\Delta E_j)^2\bigg)\bigg\}\\
		&\quad\leq \sup_{s\leq t, \|h\|_H \leq N} |u_{xx}(s, E_{s}, h)|^2_{\mathcal{L}(H)} \sum_{j=1}^{n-1}\big(\mathbb{E}||\Delta W_j ||_K^4 - (tr Q)^2 \mathbb{E}(\Delta E_j)^2\big)\\
		&\quad= 2\sup_{s\leq t, \|h\|_H \leq N} |u_{xx}(s, E_{s}, h)|^2_{\mathcal{L}(H)}(trQ)^{2} \mathbb{E}\sum_{j=1}^{n-1}(\Delta E_j)^2 \rightarrow 0,
	\end{aligned}
\end{eqnarray*}
where the convergence follows from the Fubini-Tonelli Theorem and the fact that the time-change $E_{t}$ has finite bounded variation. Additionally,
\begin{eqnarray*}
	\begin{aligned}
		\mathbb{P}\bigg(&\sum_{j=1}^{n-1}(1-1_j^N)\big\{\big\langle u_{xx}(t_{j}, E_{t_{j}}, W_{E_{t_j}})(\Delta W_j), \Delta W_j\big\rangle_K\\ &\qquad\qquad\qquad\qquad\qquad- tr\big(1_j^N u_{xx}(t_{j}, E_{t_{j}}, W_{E_{t_j}})Q\big)\Delta E_j\big\} \neq 0\bigg)\\
		&\leq\mathbb{P}\bigg(\sup_{s \leq t} \{||W_{E_s}|| > N\}\bigg) \rightarrow 0~\textrm{as}~N \rightarrow \infty.
	\end{aligned}
\end{eqnarray*}
Thus, it follows that
\begin{eqnarray}\label{almostsureconvergenceofsecondterminItoformula}
\begin{aligned}
	I_{4} &= \frac{1}{2}\sum_{j=1}^{n-1}\langle u_{xx}(t_{j}, E_{t_{j}}, W_{E_{t_j}})(\Delta W_j), \Delta W_j \rangle_K\\ &\rightarrow \frac{1}{2}\int_0^t tr[u_{xx}(s, E_{s}, W_{E_s})Q]\mathrm{d}E_s
\end{aligned}
\end{eqnarray}
in probability. Combining~\eqref{almostsureconvergenceofirstterminItoformula},~\eqref{almostsureconvergenceofthirdterminItoformula} and~\eqref{almostsureconvergenceofsecondterminItoformula}, and taking the limit on the right hand side of~\eqref{taylorexpansion} yields the It\^o formula for the function $u(t,E_t,W_{E_t})$:
\begin{equation} \label{itoformulaoffunctionu(x)}
\begin{aligned}	   
u(t, E_{t}, W_{E_t})  =  u(0, 0, 0) + \int_{0}^{t}u_{t_{1}}(s, E_{s}, W_{E_{s}})\mathrm{d}s\\ 
 +  \int_{0}^{t}u_{t_{2}}(s, E_{s}, W_{E_{s}})\mathrm{d}E_{s}\\
 +  \frac{1}{2}\int_0^t
	   tr[u_{xx}(s, E_{s}, W_{E_s})Q]\mathrm{d}E_s\\ 
 +  \int_0^t \langle u_x(s, E_{s}, W_{E_s}), dW_{E_s} \rangle_K,\\
\end{aligned}
\end{equation}
in probability. Also note that
\begin{equation}\label{derivativesoffunctionu(x)}
	\begin{aligned}   
	u_{t_{1}}(t, E_{t}, k) = \langle F_{x}(X(0)+\psi t+ \gamma E_t + \phi k), \psi\rangle_{H}\\
	   u_{t_{2}}(t, E_{t}, k) = \langle F_{x}(X(0)+\psi t+ \gamma E_t + \phi k), \gamma\rangle_{H}\\
	   u_x(t, E_{t}, k)=\phi^*F_x(X(0)+\psi t+\gamma E_t + \phi k), \\
	   u_{xx}(t, E_{t}, k)=\phi^*F_{xx}(X(0)+\psi t+ \gamma E_t + \phi k)\phi,\\
\end{aligned}
\end{equation}
and
\begin{eqnarray}\label{reorderoftrace}
	tr[F_{xx}(X(s))(\phi Q^{1/2})(\phi Q^{1/2})^*]=tr[\phi^*F_{xx}(X(s))\phi Q].
\end{eqnarray}
Substituting ~\eqref{derivativesoffunctionu(x)} and~\eqref{reorderoftrace} into~\eqref{itoformulaoffunctionu(x)} yields 
\begin{equation}\label{itoformulainprobability}
	\begin{aligned}	
F(X(t))  =  F(X(0)) + \int_0^t \langle F_x(X(s)), \psi(s)\rangle_H\mathrm{d}s\\
		 +  \int_0^t\langle F_x(X(s)), \gamma(s)\rangle_H\mathrm{d}E_s\\ 
 +  \int_0^t\langle F_x(X(s)), \phi(s)\mathrm{d}W_{E_s}\rangle_H\\
	 +  \frac{1}{2} \int_0^t tr(F_{xx}(X(s))(\phi(s)Q^{1/2})(\phi(s)Q^{1/2})^*) \mathrm{d}E_s\\	
\end{aligned}
\end{equation}
in probability. Consequently, there is a subsequence such that the equality~\eqref{itoformulainprobability} holds almost surely. Therefore applying the second change of variable formula yields the desired result.
\end{proof}

\subsection{Stochastic differential equations (SDEs) driven by the time-changed $Q$-Wiener process}
Let $K$ and $H$ be real separable Hilbert spaces, and let $M(t) := W_{E_t}$ be a time-changed $K$-valued $Q$-Wiener process on a complete filtered probability space $(\Omega, \mathcal{G}, \{\mathcal{G}\}_{t\leq T}, \mathbb{P})$ with the filtration $\mathcal{G}_{t} = \mathcal{\tilde{F}}_{E_{t}}$ satisfying the usual conditions. Consider the following type of semilinear SDE driven by the time-changed $Q$-Wiener process on $[0, T]$ in $H$: 
\begin{eqnarray}\label{SDEdrivenbytimechangedQwiener}
 	du(t)=Au(t)\mathrm{d}t+B(u,t)\mathrm{d}M(t)
\end{eqnarray}
with initial condition $u(0)=u_0$, where $A$ is the generator of a $C_{0}$-semigroup of operators $\{S(t), t\geq 0\}$. 
\begin{thm}\label{existenceofmildsolution}
Assume that the following hypotheses are satisfied:
\begin{enumerate}
	\item[1)] $S(t)$ is a contraction $C_0$-semigroup generated by $A$.
	\item[2)] $B(t,u): \mathcal{D}(\mathbb{R}^+, H) \rightarrow \tilde{\Lambda}_2(K_Q, H)$ is non-anticipating where $\mathcal{D}(\mathbb{R}^+, H)$ denotes the $H$-valued cadlag adapted processes with $\mathbb{R}^+$ as the time interval.
	\item[3)] (Local Lipschitz property) For every $r>0$, there exists a constant $K_r>0$ such that for every $x,y \in\mathcal{D}(\mathbb{R}^+, H)$ and $t \geq 0$ 
	\begin{eqnarray*}
		\|B(t,x)-B(t,y)||_{\mathcal{L}_2(K_Q, H)}^2 \leq K_r \sup_{s<t} \|x(s)-y(s)||_H^2
	\end{eqnarray*}
 on the set $\{\omega: \sup_{s<t} \max (\|x(s,\omega)||_H,||y(s,\omega)\|_H)\leq r\}.$
	\item[4)] (Growth condition) There exists a constant $K_{0}>0$ such that for every $x \in \mathcal{D}(\mathbb{R}^+, H)$ and $t \geq 0$,
	\begin{eqnarray*}
		 \|B(t,x)\|_{\mathcal{L}_2(K_Q, H)}^2 \leq K_{0}(1+\sup_{s<t} \|x(s)\|_H^2).
	\end{eqnarray*}
	\item[5)] $\mathbb{E}(\|u_0\|_H^2) < \infty$.
\end{enumerate}
	Then, the SDE~\eqref{SDEdrivenbytimechangedQwiener} has a pathwise continuous solution of the form
	\begin{eqnarray}
		u(t)=S(t)u_0+\int_{0}^{t}S(t-s)B(s,u)dW_{E_{s}},
	\end{eqnarray}
which is called the mild solution to the SDE.
\end{thm}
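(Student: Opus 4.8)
The plan is to obtain the solution by reducing the problem to the abstract existence-and-uniqueness theory of Grecksch and Tudor~\cite{Grecksch1995} for semilinear equations driven by a general continuous square-integrable martingale, and then verifying that $M(t) = W_{E_t}$ together with the coefficient $B$ fits that framework. By Theorem~\ref{martingaleoftimechagedQwiener}, $M(t)$ is a $K$-valued square-integrable martingale with respect to $\mathcal{G}_t = \tilde{\mathcal{F}}_{E_t}$, and by Proposition~\ref{increasingandquadracticprocess} its increasing process and quadratic variation are $tr(Q)E_t$ and $QE_t$, respectively. Consequently the time-changed It\^o isometry of the previous subsection identifies the natural integrator measure associated with $M$ as $\mathrm{d}E_s$, so that
\begin{eqnarray*}
\mathbb{E}\Big\|\int_0^t \Psi(s)\,\mathrm{d}M(s)\Big\|_H^2 = \mathbb{E}\int_0^t \|\Psi(s)\|_{\mathcal{L}_2(K_Q,H)}^2\,\mathrm{d}E_s.
\end{eqnarray*}
This is exactly the isometry in terms of which hypotheses 3) and 4) are phrased, which is what makes the reduction possible.

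Granting this, I would construct the solution as the fixed point of the map $\Gamma(u)(t) = S(t)u_0 + \int_0^t S(t-s)B(s,u)\,\mathrm{d}W_{E_s}$ via Picard iteration $u^{(0)}(t) = S(t)u_0$, $u^{(n+1)} = \Gamma(u^{(n)})$. Because hypothesis 3) is only \emph{local}, I would first localize with the stopping times $\tau_r = \inf\{t\geq 0: \|u(t)\|_H > r\}$, on which 3) becomes a genuine Lipschitz bound with constant $K_r$; hypotheses 4) and 5) and the isometry then guarantee each iterate is a well-defined square-integrable process. The core estimate combines the isometry with the contraction property $\|S(t-s)\|\leq 1$ and the localized Lipschitz bound to give, for the stopped iterates,
\begin{eqnarray*}
\mathbb{E}\sup_{v\leq t}\|u^{(n+1)}(v) - u^{(n)}(v)\|_H^2 \leq C K_r\,\mathbb{E}\int_0^t \sup_{\sigma\leq s}\|u^{(n)}(\sigma) - u^{(n-1)}(\sigma)\|_H^2\,\mathrm{d}E_s.
\end{eqnarray*}
Iterating and taking expectations converts the right side into an $n$-fold integral against $\mathrm{d}E_s$, whose size is controlled by the moment identity $\mathbb{E}(E_t^n) = t^{n\beta}n!/\Gamma(n\beta+1)$ from the proof of Theorem~\ref{martingaleoftimechagedQwiener}; the denominator $\Gamma(n\beta+1)$ dominates $n!$ and forces summability, so $\{u^{(n)}\}$ is Cauchy. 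Passing to the limit yields a mild solution up to $\tau_r$, and sending $r\to\infty$ removes the localization. Pathwise continuity is inherited from the continuity of the time-changed stochastic integral established alongside the It\^o isometry, and uniqueness follows by applying the same difference estimate to two solutions and invoking the same summability.

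I expect the principal obstacle to lie in the Gronwall-type closure against the random measure $\mathrm{d}E_s$: the classical argument produces an exponential factor $e^{Ct}$ from integration against $\mathrm{d}s$, whereas here the iterated $\mathrm{d}E_s$-integrals must be resummed using the fractional moments $t^{n\beta}n!/\Gamma(n\beta+1)$, giving a Mittag-Leffler rather than exponential bound, and one must check that the localization by $\tau_r$ is compatible with the expectation over the time change. A secondary technical point is that $\Gamma(u)$ is a stochastic \emph{convolution} rather than a plain stochastic integral, so Doob's inequality does not apply to it directly; controlling $\sup_{v\leq t}$ of the convolution requires either a maximal inequality for stochastic convolutions driven by contraction semigroups or the factorization method, adapted to the integrator $\mathrm{d}E_s$. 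Verifying that these adapted estimates indeed match the integrability conditions required by Grecksch and Tudor~\cite{Grecksch1995} is the step I would treat most carefully.
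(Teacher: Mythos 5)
Your proposal takes essentially the same route as the paper: the paper's entire justification is the remark that, since $W_{E_t}$ is a Hilbert space-valued square-integrable martingale (Theorem~\ref{martingaleoftimechagedQwiener}), the result follows from the general existence theorem of Grecksch and Tudor~\cite{Grecksch1995} for SDEs driven by Hilbert space-valued martingales, which is exactly your reduction. The additional Picard/localization/Mittag-Leffler machinery you sketch goes beyond what the paper records and is sound in outline, with one small imprecision: for $\beta\in(0,1)$ the quantity $\Gamma(n\beta+1)$ does \emph{not} dominate $n!$; rather, the $n$-fold iterated integral equals $E_t^n/n!$, so the $n!$ in $\mathbb{E}(E_t^n)=t^{n\beta}n!/\Gamma(n\beta+1)$ cancels and summability comes from $\sum_n t^{n\beta}/\Gamma(n\beta+1)$ being the Mittag-Leffler series, as you correctly indicate afterwards.
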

\begin{rem}
	Since $W_{E_{t}}$ is a Hilbert space-valued square integrable martingale, Theorem~\ref{existenceofmildsolution} follows from~\cite{Grecksch1995} where a similar result is provided for a SDE driven by a general Hilbert space-valued martingale. 
\end{rem}
Consider two types of SDEs in  Hilbert space with the same initial condition $x_{0}$:
\begin{eqnarray}\label{dual2}
	\mathrm{d}Y(t) = (AY(t) + F(t, Y(t)))\mathrm{d}t + B(t, Y(t))\mathrm{d}W_{t},
\end{eqnarray}
and 
\begin{eqnarray}\label{dual1}
	\mathrm{d}X(t) = (AX(t) + F(E_{t}, X(t)))\mathrm{d}E_{t} + B(E_{t}, X(t))\mathrm{d}W_{E_{t}}.
\end{eqnarray}
The following theorem establishes a deep connection between the classic SDE (\ref{dual2}) and the time-changed SDE (\ref{dual1}).
\begin{thm}(Duality of SDEs in Hilbert Space)\label{Duality}
	Let $U_{t}$ be a $\beta$-stable subordinator and $E_{t}$ be the inverse of $U_{t}$, which is a finite $\tilde{\mathcal{F}}_{E_t}$-measurable time-change.
	\begin{itemize}
		\item If an H-valued process $Y(t)$ satisfies SDE~\eqref{dual2}, then the H-valued process $X(t) := Y(E_{t})$ satisfies SDE~\eqref{dual1}.
	\end{itemize}
	\begin{itemize}
		\item If an H-valued process $X(t)$ satisfies SDE~\eqref{dual1}, then the H-valued process $Y(t) := X(U_{t-})$ satisfies SDE~\eqref{dual2}.
	\end{itemize}
\end{thm}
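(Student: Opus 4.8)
The plan is to pass to the integral forms of the two equations and to move each term across the time change using the change of variable formulas already established, namely Theorem~\ref{1stchangeformula} and Theorem~\ref{2ndcnhageformula}, together with their elementary deterministic analogues for Lebesgue--Stieltjes integrals. The two bullets are dual to one another: the first composes a solution with $E_{t}$ and invokes the first change of variable formula, while the second composes with $U_{t-}$ and invokes the second. The pivot throughout is the pair of sample-path identities $E_{U_{t-}} = t$ and $E_{U_{s-}} = s$, relating the $\beta$-stable subordinator to its inverse~\eqref{inversesubordinator}, which is what makes the two transformations cancel.

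For the first bullet, suppose $Y(t)$ solves~\eqref{dual2}, written in integral form
\begin{eqnarray*}
	Y(t) = x_{0} + \int_{0}^{t}\bigl(AY(s) + F(s, Y(s))\bigr)\mathrm{d}s + \int_{0}^{t}B(s, Y(s))\mathrm{d}W_{s}.
\end{eqnarray*}
Evaluating at the finite random time $E_{t}$ and setting $X(t) := Y(E_{t})$ gives the same identity with upper limit $E_{t}$, and I would then treat the two integrals separately. For the drift, the deterministic first change of variable formula $\int_{0}^{E_{t}}g(s)\mathrm{d}s = \int_{0}^{t}g(E_{s})\mathrm{d}E_{s}$ (a substitution valid since $E_{t}$ is continuous and nondecreasing) converts the term into $\int_{0}^{t}\bigl(AX(s) + F(E_{s}, X(s))\bigr)\mathrm{d}E_{s}$, where $g(s) = AY(s)+F(s,Y(s))$ is treated as a single Bochner-integrable $H$-valued path. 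For the stochastic term, Theorem~\ref{1stchangeformula} applied with $\Phi(s) = B(s, Y(s))$ yields $\int_{0}^{E_{t}}B(s, Y(s))\mathrm{d}W_{s} = \int_{0}^{t}B(E_{s}, X(s))\mathrm{d}W_{E_{s}}$. Assembling the pieces reproduces exactly~\eqref{dual1}, so $X(t) = Y(E_{t})$ is a solution.

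For the second bullet, suppose $X(t)$ solves~\eqref{dual1} and set $Y(t) := X(U_{t-})$. Evaluating the integral form of~\eqref{dual1} at $U_{t-}$ produces integrals with upper limit $U_{t-}$. The deterministic second change of variable formula $\int_{0}^{t}g(s)\mathrm{d}E_{s} = \int_{0}^{E_{t}}g(U_{s-})\mathrm{d}s$ transforms the drift: with $g(s) = AX(s) + F(E_{s}, X(s))$ and upper limit $U_{t-}$ it gives $\int_{0}^{E_{U_{t-}}}\bigl(AX(U_{s-}) + F(E_{U_{s-}}, X(U_{s-}))\bigr)\mathrm{d}s$, which collapses to $\int_{0}^{t}\bigl(AY(s) + F(s, Y(s))\bigr)\mathrm{d}s$ after using $E_{U_{s-}} = s$ and $E_{U_{t-}} = t$. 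For the stochastic term, Theorem~\ref{2ndcnhageformula} applied with $\Phi(s) = B(E_{s}, X(s))$ gives $\int_{0}^{U_{t-}}B(E_{s}, X(s))\mathrm{d}W_{E_{s}} = \int_{0}^{t}B(E_{U_{s-}}, X(U_{s-}))\mathrm{d}W_{s} = \int_{0}^{t}B(s, Y(s))\mathrm{d}W_{s}$. Summing reproduces~\eqref{dual2} for $Y(t)$.

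The two substitutions are routine; the delicate points are (i) the sample-path identities $E_{U_{t-}} = t$ and $E_{U_{s-}} = s$, which must be read off from the definition~\eqref{inversesubordinator} and the jump structure of $U_{t}$---the left limits $U_{s-}$ are indispensable precisely because $U$ jumps and $E$ is constant across the corresponding flat intervals---and (ii) verifying that the composed coefficients $B(E_{s}, X(s))$ and $B(s, Y(s))$ lie in $\tilde{\Lambda}_{2}(K_{Q}, H)$ and are adapted to the correct filtrations, so that Theorems~\ref{1stchangeformula} and~\ref{2ndcnhageformula} genuinely apply. I expect the main obstacle to be the careful bookkeeping of these left-continuous compositions and the filtration measurability, since the algebraic cancellation of the time change is otherwise immediate once the two change of variable formulas are in hand.
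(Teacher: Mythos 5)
Your proposal is correct and follows essentially the same route as the paper's proof: both pass to the integral forms, apply Theorem~\ref{1stchangeformula} and Theorem~\ref{2ndcnhageformula} (together with their Lebesgue--Stieltjes analogues for the drift terms) and use the sample-path identity $E_{U_{s-}} = s$ to collapse the composed time changes. The only difference is expository: you spell out the deterministic substitution for the drift integral and flag the adaptedness and $\tilde{\Lambda}_{2}(K_{Q},H)$-membership checks that the paper leaves implicit.
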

\begin{proof}
	First, consider the integral form of SDE~\eqref{dual2}:
	\begin{eqnarray}\label{sdenotimechangeinteduality}
\begin{aligned}
		Y(t) = x_{0} + \int_{0}^{t}\bigg(AY(s) + F(s, Y(s))\bigg)\mathrm{d}s\\ 
+ \int_{0}^{t}B(s, Y(s))\mathrm{d}W_{s}.
\end{aligned}	
\end{eqnarray}
	Suppose the $H$-valued process $Y(t)$ satisfies~\eqref{sdenotimechangeinteduality}, and let $X(t) := Y(E_{t})$. Applying the first change of variable formula gives 
	\begin{eqnarray*}
		\begin{aligned}
			X(t) &= x_{0} + \int_{0}^{E_{t}}\bigg(AY(s) + F(s, Y(s))\bigg)\mathrm{d}s + \int_{0}^{E_{t}}B(s, Y(s))\mathrm{d}W_{s}\\
			&= x_{0} + \int_{0}^{t}\bigg(AY(E_{s}) + F(E_{s}, Y(E_{s}))\bigg)\mathrm{d}E_{s} + \int_{0}^{t}B(E_{s}, Y(E_{s}))\mathrm{d}W_{E_{s}}\\
			&= x_{0} + \int_{0}^{t}\bigg(AX(s) + F(E_{s}, X(s))\bigg)\mathrm{d}E_{s} + \int_{0}^{t}B(E_{s}, X(s))\mathrm{d}W_{E_{s}},\\
		\end{aligned}
	\end{eqnarray*}
	which is the corresponding integral form of SDE~\eqref{dual1}.
	
	Similarly, suppose the $H$-valued process $X(t)$ satisfies the SDE~\eqref{dual1}. Using the second change of variable formula yields 
	\begin{eqnarray*}
		\begin{aligned}
			X(t) &= x_{0} + \int_{0}^{t}\bigg(AX(s) + F(E_{s}, X(s))\bigg)\mathrm{d}E_{s} + \int_{0}^{t}B(E_{s}, X(s))\mathrm{d}W_{E_{s}}\\
			&= x_{0} + \int_{0}^{E_{t}}\bigg(AX(U_{s-}) + F(E_{U_{s-}}, X(U_{s-}))\bigg)\mathrm{d}s\\ 
			&\quad\qquad+ \int_{0}^{E_{t}}B(E_{U_{s-}}, X(U_{s-}))\mathrm{d}W_{s}\\
			&= x_{0} + \int_{0}^{E_{t}}\bigg(AX(U_{s-}) + F(s, X(U_{s-}))\bigg)\mathrm{d}s + \int_{0}^{E_{t}}B(s, X(U_{s-}))\mathrm{d}W_{s}.\\
		\end{aligned}
	\end{eqnarray*}
	Let $Y(t) := X(U_{t-})$, then
	\begin{eqnarray*}
		\begin{aligned}
			Y(t) 
			& = x_{0} + \int_{0}^{t}\bigg(AY(s) + F(s, Y(s)))\bigg)\mathrm{d}s + \int_{0}^{t}B(s, Y(s)))\mathrm{d}W_{s},
		\end{aligned}
	\end{eqnarray*}
which is the integral form of the SDE~\eqref{dual2}. 
\end{proof}

It is known from~\cite{Mandrecar2010} that under appropriate conditions, the strong solution, $Y(t)$, of the SDE~\eqref{dual2} exists and is unique in the form
\begin{eqnarray*}
	Y(t) = x_{0} + \int_{0}^{t}(AY(s) + F(s, Y(s)))\mathrm{d}s + \int_{0}^{t}B(s, Y(s))\mathrm{d}W_{s}
\end{eqnarray*}
for all $t \leq T$, $\mathbb{P}$-a.s. The existence and uniqueness of a strong solution to the time-changed SDE~\eqref{dual1} is then established based on the duality theorem.
\begin{thm}\label{sthm1}
Assume that the following hypotheses are satisfied:
\begin{enumerate}
	\item[1)] $W_{t}$ is a $K$-valued $Q$-Wiener process on a complete filtered probability space $(\Omega, \mathcal{F}, \{\mathcal{F}_{t}\}_{t\leq T}, \mathbb{P})$ with the filtration $\mathcal{F}_{t}$ satisfying the usual conditions and $E_{t}$ is the inverse $\beta$-stable subordinator which is independent of $W_{t}$.
	\item[2)] A is a linear bounded operator.
	\item[3)] The coefficients $F: \Omega \times [0,T] \times C([0,T],H) \rightarrow H$ and $B: \Omega \times [0,T] \times C([0,T],H) \rightarrow \mathcal{L}_2(K_Q,H)$, where $C([0,T],H)$ is the Banach space of $H$-valued continuous functions on $[0,T]$, satisfy the following conditions
	\begin{enumerate}
		\item $F$ and $B$ are jointly measurable, and for every $0 \leq t \leq T$, they are measurable with respect to the product $\sigma$-field $\mathcal{F}_t \times \mathcal{C}_t$ on $\Omega \times C([0,T], H)$, where $\mathcal{C}_t$ is a $\sigma$-field generated by cylinders with bases over $[0,t]$.
		\item There exists a constant $L$ such that for all $x \in C([0,T],H)$,
		\begin{eqnarray*}
			\qquad\qquad\|F(\omega,t,x)\|_H + \|B(\omega,t,x)\|_{\mathcal{L}_2(K_Q,H)} \leq L(1+ \sup_{0\leq s\leq T} \|x(s)\|_H)
		\end{eqnarray*}
        for $\omega \in \Omega$ and $0 \leq t\leq T$.
		\item For all $x,y \in C([0,T],H)$, $\omega \in \Omega$, $0 \leq t\leq T$, there exists $K_{0}>0$ such that
		\begin{eqnarray*}
			\begin{aligned}
			&\qquad\qquad\quad\|F(\omega,t,x)-F(\omega,t,y)\|_H + \|B(\omega,t,x)- B(\omega,t,y)\|_{\mathcal{L}_2(K_Q,H)}\\
			&\qquad\qquad\leq K_{0}\sup_{0\leq s\leq T} ||x(s)-y(s)||_H.
			\end{aligned}
		\end{eqnarray*}
     \end{enumerate}
	\item[4)] $\mathbb{E}\int_0^T \|B(t,Y(t))\|^2_{\mathcal{L}_2(K_Q,H)}dt < \infty$.
	\item[5)] $Y(t)$ is in the domain of $A$ $\mathrm{d}\mathbb{P} \times\mathrm{d}t$-almost everywhere.
	\item[6)] $x_0$ is an $\mathcal{F}_0$-measurable $H$-valued random variable.
	\end{enumerate}
Then, the time-changed SDE~\eqref{dual1} has a unique strong solution, $X(t)$, satisfying
\begin{eqnarray}\label{intstrongsolnoftimechanged}
	X(t) = x_{0} + \int_{0}^{t}(AX(s) + F(E_{s}, X(s)))\mathrm{d}E_{s} + \int_{0}^{t}B(E_{s}, X(s))\mathrm{d}W_{E_{s}}.
\end{eqnarray}
\end{thm}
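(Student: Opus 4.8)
My plan is to prove Theorem~\ref{sthm1} by reducing the time-changed equation~\eqref{dual1} to the classical equation~\eqref{dual2} through the duality established in Theorem~\ref{Duality}, and then importing the existence-uniqueness theory for~\eqref{dual2} from~\cite{Mandrecar2010}. Hypotheses (2)--(6) are precisely the assumptions under which the classical semilinear SDE~\eqref{dual2} admits a unique strong solution $Y(t)$; so the first step is to invoke that cited result to obtain a unique $Y(t)$ solving $Y(t)=x_{0}+\int_{0}^{t}(AY(s)+F(s,Y(s)))\mathrm{d}s+\int_{0}^{t}B(s,Y(s))\mathrm{d}W_{s}$ for all $t\leq T$, $\mathbb{P}$-a.s. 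The whole argument then rests on transporting this statement back and forth along the time-change.

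For existence, I would set $X(t):=Y(E_{t})$ and apply the first bullet of Theorem~\ref{Duality}, which immediately yields that $X(t)$ satisfies the integral form~\eqref{intstrongsolnoftimechanged} of~\eqref{dual1}. The regularity of $X$ as a genuine strong solution is inherited: since $E_{t}$ is finite and continuous (as the inverse of a strictly increasing $\beta$-stable subordinator) and $Y$ is pathwise continuous, $X=Y\circ E$ is pathwise continuous; the square-integrability $\mathbb{E}\int_{0}^{T}\|B(E_{s},X(s))\|_{\mathcal{L}_{2}(K_{Q},H)}^{2}\,\mathrm{d}E_{s}<\infty$ follows from hypothesis (4) together with the first change of variable formula (Theorem~\ref{1stchangeformula}) and the time-changed It\^o isometry~\eqref{generalitoisometryinsection3}, so the stochastic integral driven by $W_{E_{s}}$ is well defined and is a square-integrable martingale. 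Adaptedness of $X$ to $\mathcal{G}_{t}=\tilde{\mathcal{F}}_{E_{t}}$ is immediate because $Y$ is $\tilde{\mathcal{F}}_{t}$-adapted and $E_{t}$ is an $\{\tilde{\mathcal{F}}_{\tau}\}$-stopping time for each $t$.

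For uniqueness, suppose $X_{1}(t)$ and $X_{2}(t)$ are two strong solutions of~\eqref{dual1}. By the second bullet of Theorem~\ref{Duality}, the processes $Y_{i}(t):=X_{i}(U_{t-})$ each satisfy~\eqref{dual2}, and hence by the uniqueness part of the classical theorem from~\cite{Mandrecar2010} we conclude $Y_{1}(t)=Y_{2}(t)$ for all $t\leq T$, $\mathbb{P}$-a.s. To recover equality of the $X_{i}$ themselves I would use the composition identities relating $U_{t}$ and its inverse $E_{t}$: because $U$ is strictly increasing one has $E_{U_{t-}}=t$ a.s., and on the flat intervals of $E$ (which correspond exactly to the jumps of $U$) the relation $X_{i}(t)=X_{i}(U_{E_{t}-})=Y_{i}(E_{t})$ holds a.s. Combining $X_{i}(t)=Y_{i}(E_{t})$ with $Y_{1}=Y_{2}$ gives $X_{1}(t)=X_{2}(t)$ for all $t\leq T$, $\mathbb{P}$-a.s., which is the desired uniqueness; together with the existence construction above, this shows the solution produced is the unique one.

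The step I expect to be the main obstacle is the rigorous handling of the composition of the two time-changes in the uniqueness direction, namely verifying that the maps $Y\mapsto Y\circ E$ and $X\mapsto X\circ U_{-}$ are genuine mutual inverses on the relevant path space, i.e.\ that $X(U_{E_{t}-})=X(t)$ holds $\mathbb{P}$-a.s.\ simultaneously for all $t\in[0,T]$. This requires care because $U$ is a pure-jump increasing L\'evy process with left limits while $E$ is continuous with flat stretches, so one must argue that the solution paths $X_{i}$ are constant precisely on those flat intervals (which is forced by the fact that both the $\mathrm{d}E_{s}$-drift and the $\mathrm{d}W_{E_{s}}$-martingale part of~\eqref{dual1} do not move there) in order to identify $X(t)$ with $Y(E_{t})$ off a single null set. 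The measurability and adaptedness bookkeeping for the time-changed filtration $\tilde{\mathcal{F}}_{E_{t}}$ under these compositions is the accompanying technical point, but it is routine once the pathwise identity is secured.
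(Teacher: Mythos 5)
Your proposal is correct and follows essentially the same route as the paper: obtain the unique strong solution $Y$ of~\eqref{dual2} from~\cite{Mandrecar2010}, use the first part of Theorem~\ref{Duality} to get existence of $X=Y\circ E$, and use the second part to transport any other solution back to~\eqref{dual2} for uniqueness. Your additional care with the composition identities $E_{U_{t-}}=t$ and $X(U_{E_t-})=X(t)$, justified by the constancy of the $\mathrm{d}E_s$ and $\mathrm{d}W_{E_s}$ integrals on the flat intervals of $E$, fills in precisely the step that the paper's own proof leaves implicit.
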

\begin{proof}
	From~\cite{Mandrecar2010}, based on conditions in Theorem~\ref{sthm1}, there is a unique solution $Y(t)$ that satisfies the SDE~\eqref{dual2}.  Moreover, it follows from Theorem~\ref{Duality} that $X(t):=Y(E_t)$ satisfies the SDE~\eqref{dual1}.  Therefore, there exists a solution to the time-changed SDE~\eqref{dual1}.
	
	Now suppose there exists another solution to the SDE~\eqref{dual1}.  Call this solution $\hat{X}(t)$.  Then, by Theorem ~\ref{Duality}, the process $\hat{Y}(t):=\hat{X}(U_{t-})$ is a solution to the SDE~\eqref{dual2}.  Since the solution to the SDE~\eqref{dual2} is unique from~\cite{Mandrecar2010}, it must be that $\hat{Y}(t)=Y(t)$.  Thus $\hat{X}(U_{t-})=Y(t)$ which implies that $\hat{X}(t)=Y(E_t)=X(t)$.  Therefore, the solution $X(t)$ of the SDE~\eqref{dual1} is unique and satisfies the desired integral equation~\eqref{intstrongsolnoftimechanged}.
\end{proof}
%
%
\section{Connections between Hilbert space-valued integrals driven by  time-changed $Q$-Wiener processes and time-changed cylindrical Wiener processes, respectively, and Walsh-type integrals }\label{connectionbetweenWalshandHilbert}

The objective of this section is to establish the equality of the three integrals given below for appropriate integrands:   
\begin{eqnarray*}
	\int_{0}^{T}\int_{\mathbb R^{N}}g(t,x) M_E(\mathrm{d}t,\mathrm{d}x) = \int_{0}^{T} g(t) \mathrm{d} \tilde{W}_{E_t}
	=  \int_{0}^{T}\Phi^{g}_t \circ J^{-1} \mathrm{d}W_{E_t},
\end{eqnarray*}
where $M_E$ is a time-changed version of a worthy martingale measure, $\tilde {W}_{E_t}$ is a time-changed version of a cylindrical Wiener process and $W_{E_t}$ is a time-changed Q-Wiener process. Section~\ref{connectionsonnontimechangcase} focuses on the case of no time change. Section~\ref{connectionsontimechangecase} provides the extension to the time-changed case.

\subsection{The case of no time change}\label{connectionsonnontimechangcase}
The equality of these integrals in the case where there is no time change are made in~\cite{Karczewska2005, Dalang2011}. The general idea is to first define a specific random field $F$ and an associated Hilbert space $K$. After identifying these objects, the integral with respect to the resulting cylindrical Wiener process and the integral with respect to the developed martingale measure are shown to be the same for a particular class of integrands. Also a connection between the cylindrical Wiener process and a Q-Wiener process is made that leads to an equality of their respective integrals. Thus, a combination of those results proves that all three integrals are equal. 

Let $\{F(\phi)\ |\ \phi \in C_{0}^{\infty}(\mathbb{R}^{+}\times\mathbb{R}^{N})\}$ be a family of mean zero Gaussian random variables, called a Gaussian random field. The covariance of $F$ is given by
\begin{eqnarray}\label{covarianceofrandomfield}
	\mathbb{E} (F(\phi)F(\psi))=\int_{\mathbb R^+}\int_{\mathbb R^N}\int_{\mathbb R^N}\phi(s, x)f(x-y)\psi(s, y)\mathrm{d}y\mathrm{d}x\mathrm{d}s,
\end{eqnarray}
where $f$ satisfies the following two conditions:
\begin{enumerate}
	\item[1)] $f$ is a non-negative, non-negative definite continuous function on $\mathbb R^{N}\backslash \{0\}$,
	which is integrable in a neighborhood of $0$;
	\item[2)] for all $\gamma \in \mathcal{S}(\mathbb{R}^N)$, the space of $C^\infty$ functions which are rapidly decreasing along with all their derivatives, the following conditions hold:
	\begin{enumerate}
		\item[a)] there exists a tempered measure $\mu$ on $\mathbb{R}^{N}$ such that for any $m\in \mathbb{N}^+$\\ $\displaystyle\int_{\mathbb{R}^N} (1+|\xi|^2)^{-m} \mu(\mathrm{d}\xi) < \infty$, and 
		\item[b)] $\displaystyle\int_{\mathbb{R}^N} f(x) \gamma(x) \mathrm{d}x =\int_{\mathbb{R}^N} \mathcal{F}\gamma(\xi) \mu (\mathrm{d}\xi)$, where $\mathcal{F}\gamma$ is the Fourier transform of $\gamma$.
	\end{enumerate}
\end{enumerate}
For the remainder of Section~\ref{connectionbetweenWalshandHilbert}, fix the specific random field $F$ chosen in~\eqref{covarianceofrandomfield}.
We now define a Hilbert space associated with the fixed random field $F$.  Let $K$ be the completion of the Schwartz space $\mathcal{S}(\mathbb{R}^N)$ with the semi-inner product
\begin{eqnarray}\label{definitionofHilbertspaceKinconnectionsection}
\begin{aligned}
	\langle \gamma, \psi \rangle_K := \int_{\mathbb R^N}\int_{\mathbb R^N}\gamma(x)f(x-y)\psi(y)\mathrm{d}y\mathrm{d}x\\
= \int_{\mathbb{R}^N} \mu(\mathrm{d}\xi)\mathcal{F}\gamma(\xi)\overline{\mathcal{F}\psi(\xi)},
\end{aligned}
\end{eqnarray}
where $\gamma, \psi \in K$, and associated semi-norm $||\cdot||_K$. Then $K$ is a Hilbert space, see~\cite{Dalang1999}. In the remaining of this section, the Hilbert space $K$ always refers to~\eqref{definitionofHilbertspaceKinconnectionsection}.

Moreover, after fixing a time interval $[0, T]$, it is possible to consider the set $K_{T} := L^{2}([0, T],; K)$ with the norm 
\begin{eqnarray*}
	\|g\|^{2}_{K_{T}} = \int_{0}^{T}\|g(s)\|^{2}_{K}\mathrm{d}s.
\end{eqnarray*}
Note that $C^{\infty}_{0}([0, T]\times\mathbb{R}^{N})$ is dense in $K_{T}$. It should also be noted that although $F$ was originally defined on smooth, compactly supported functions, it is possible to extend $F$ to functions of the form $1_{[0, T]}(\cdot)\varphi(*)$ where $\varphi\in\mathcal{S}(\mathbb{R}^{N})$. This follows since such an $F$ is a random linear functional such that $\gamma\mapsto F(\gamma)$ is an isometry from $(C^{\infty}_{0}([0, T]\times\mathbb{R}^{N}), \|\cdot\|_{K_{T}})$ into $L^{2}(\Omega, \mathcal{F}, \mathbb{P})$, i.e., a family of mean zero Gaussian random variables characterized by~\eqref{covarianceofrandomfield}. For further details, see~\cite{Dalang2011}.

\begin{defn}
	A cylindrical Wiener process on a Hilbert space $K$ as defined in~\eqref{definitionofHilbertspaceKinconnectionsection} is a family of random variables \{$\tilde{W}_t, t\geq 0\}$ such that:
	\begin{enumerate}
		\item for each $h \in K$, $\{\tilde W_t(h), t\geq 0 \}$ defines a Brownian motion with mean 0 and variance $t \langle h, h \rangle_K$;
		\item for all $s, t \in \mathbb{R}^+$ and $h,g \in K$, $\mathbb{E}(\tilde W_s(h) \tilde W_t(g))= (s \wedge t) \langle h, g \rangle_K$ where $s \wedge t:= \min(s,t)$.
	\end{enumerate}
\end{defn}
From~\cite{Dalang2011}, the stochastic process $\{\tilde{W}_{t}, t\geq 0\}$ defined in terms of the fixed random field $F$ with covariance chosen in~\eqref{covarianceofrandomfield} is given by
\begin{eqnarray}\label{definitionofcylindricalprocess}
\tilde W_t(\varphi):=F(1_{[0,t]}(\cdot)\varphi(*)),~\text{for}~\varphi\in K,
\end{eqnarray}
is a cylindrical Wiener process on the Hilbert space $K$. A complete orthonormal basis $\{f_j\}$ can be chosen such that $\{f_j\} \subset \mathcal{S}(\mathbb{R}^N)$ since $\mathcal{S}(\mathbb{R}^N)$ is a dense subspace of $K$. Consider the space $L^2(\Omega \times [0,T]; K)$ of predictable processes $g$ such that
\begin{eqnarray*}
	\mathbb{E}\left(\int_0^T ||g(s)||_K^2 \mathrm{d}s\right) < \infty.
\end{eqnarray*}
For $g\in L^2(\Omega \times [0,T]; K)$, the stochastic integral in Hilbert space $H$ with respect to the cylindrical Wiener process $\tilde{W}_{t}$ is defined as
\begin{eqnarray*}
	\int_0^Tg(s)\mathrm{d}\tilde W_s := \sum_{j=1}^{\infty} \int_0^T \langle g(s), f_j \rangle_K \mathrm{d}\tilde W_s(f_j)
\end{eqnarray*}
where particularly $H = \mathbb{R}$ and $\{f_j\}$ is an orthonormal basis of $K$ in~\eqref{definitionofHilbertspaceKinconnectionsection}.  The series is convergent in $L^2(\Omega, \mathcal{F}, \mathbb{P})$, and the sum does not depend on the choice of orthonormal basis. Additionally, the following isometry holds:
\begin{eqnarray*}
	\mathbb{E}\left(\left(\int_0^Tg(s)\mathrm{d}\tilde W_s\right)^2\right)= \mathbb{E}\left(\int_0^T ||g(s)||_K^2 \mathrm{d}s\right).
\end{eqnarray*}

On the other hand, consider $M_t$ defined by
\begin{eqnarray}\label{expressionofmartingalemeasure}
M_t(A):=F(1_{[0,t]}(\cdot)1_{A}(*)), ~~~ t \in [0,T], A \in \mathcal B_b(\mathbb R^N),
\end{eqnarray}
where $F$ is the specific random field chosen in~\eqref{covarianceofrandomfield} and $ \mathcal B_b(\mathbb R^N)$ denotes the set of bounded Borel sets of $\mathbb R^N$. The covariance of $\{M_t(A)\}$ is given by: 
\begin{eqnarray*}
	\begin{aligned}
		\mathbb E (M_t(A)M_t(B)) &=  \int_{\mathbb R^+}\int_{\mathbb R^N}\int_{\mathbb R^N}1_{[0,t]}(s)1_{A}(x)f(x-y)1_{B}(y)\mathrm{d}y\mathrm{d}x\mathrm{d}s\\
		&=  t\int_{\mathbb R^N}\int_{\mathbb R^N}1_{A}(x)f(x-y)1_{B}(y)\mathrm{d}y\mathrm{d}x.
	\end{aligned}
\end{eqnarray*}
Therefore, $M_{t}$ is a martingale measure in the following sense.
\begin{defn}(\cite{Dalang2011} )\label{definitionofmartingalemeasure} 
	A process $\{M_t(A)\}_{t\geq 0,A\in\mathcal B(\mathbb R^N)}$ is a martingale measure with respect to $\{\mathcal F_t\}_{t\geq 0}$ if:
	\begin{enumerate}
		
		\item for all $A\in\mathcal B(\mathbb R^{N})$, $M_{0}(A)=0$ a.s.;
		
		\item for $t>0$, $M_t$ is a sigma-finite $L^2(\mathbb{P})$-valued signed measure; and
		
		\item for all $A\in\mathcal B(\mathbb R^{N})$, $\{M_t(A)\}_{t\geq 0}$ is a mean-zero martingale with respect to the filtration $\{\mathcal F_t\}_{t\geq 0}$.
		
	\end{enumerate}
\end{defn}
Further, in order to define stochastic integrals with respect to a martingale measure, the martingale measure needs to be worthy.

\begin{defn}(\cite{Dalang2011})\label{definitionofworthymartingalemeasure} A martingale measure $M$ is worthy if there exists a random sigma-finite measure $K(A\times B\times C, \omega)$, where $A,B\in\mathcal B(\mathbb R^N)$, $C\in \mathcal B(\mathbb R_+)$, and $\omega\in\Omega$, such that:
\begin{enumerate}
\item $A\times B\times C\mapsto K(A\times B\times C,\omega)$ is nonnegative definite and symmetric;
		
\item $\{K(A\times B\times (0,t])\}_{t\geq 0}$ is a predictable process for all $A,B\in\mathcal B(\mathbb R^N)$;
		
\item for all compact sets $A,B\in\mathcal B(\mathbb R^N)$ and $t>0$,
	\begin{eqnarray*}
		\mathbb E(K(A\times B\times (0,t]))<\infty;
	\end{eqnarray*}

\item for all $A,B\in\mathcal B(\mathbb R^N)$ and $t>0$,
	\begin{eqnarray*}
		|\mathbb E(M_t(A)M_t(B))|\leq \mathbb K(A\times B\times (0,t])\text{ a.s. }
	\end{eqnarray*}
\end{enumerate}
	
\end{defn}
In particular, the stochastic integral with respect to a worthy martingale measure is defined in such a way that it is itself a martingale measure.
First, consider elementary processes $g$ of the form
\begin{eqnarray}\label{definitionofelementprocess}
g(s,x,\omega)=1_{(a,b]}(s)1_{A}(x)X(\omega)
\end{eqnarray}
where $0\leq a<b\leq T$, $X$ is bounded and $\mathcal F_a$-measurable, and $A\in\mathcal B(\mathbb R^N)$.
If $g$ is an elementary process as in~\eqref{definitionofelementprocess}, then define $g\cdot M$ by
\begin{eqnarray*}
	\begin{aligned}
		g\cdot M_t(B) := \int_0^t \int_B g(s,x) M(\mathrm{d}s, \mathrm{d}x)\\
		=X(\omega)\left(M_{t\wedge b}(A\cap B)-M_{t\wedge a}(A\cap B)\right).
	\end{aligned}
\end{eqnarray*}
The definition of $g\cdot M$ can be extended by linearity to simple processes, which are finite sums of elementary processes.
Let $\mathcal P_+$ denote the set of predictable processes $(\omega,t,x)\mapsto g(t,x;\omega)$ such that 
\begin{eqnarray*}
	\|g\|_+^2:=\mathbb E\left(\int_{0}^{T}\int_{\mathbb R^N}\int_{\mathbb R^N}|g(t,x)|~f(x-y)~|g(t,y)|\mathrm{d}y\mathrm{d}x\mathrm{d}t\right)<\infty.
\end{eqnarray*}
Taking limits of simple processes, the definition of $g\cdot M$ extends to all $g\in \mathcal P_+$.
From~\cite{Karczewska2005}, $g\cdot M$ is a worthy martingale measure if $g\in\mathcal P_+$. Therefore, it makes sense to define the stochastic integral with respect to $M$ as a martingale measure in the following way:
\begin{eqnarray*}
	\int_{0}^{t}\int_{A}g(s,x)M(\mathrm{d}s, \mathrm{d}x)=:g\cdot M_t(A).
\end{eqnarray*}

The next result is Proposition 2.6 in~\cite{Dalang2011}; the detailed proof is given there.

\begin{prop}(\cite{Dalang2011})\label{theoremonconnectionofmartingaleandcylindrical}
Suppose $g \in \mathcal{P}_{+}$.  Then $g \in L^2(\Omega \times [0,T]; K)$ and
\begin{eqnarray*}
	\int_{0}^{T}\int_{\mathbb{R}^N}^{}g(t,x)M(\mathrm{d}t, \mathrm{d}x) = \int_{0}^{T}g(t)\mathrm{d}\tilde W_t,
\end{eqnarray*}
where $M$ is the worthy martingale measure defined in~\eqref{expressionofmartingalemeasure} and $\tilde W_{t}$ is the cylindrical Wiener process defined in~\eqref{definitionofcylindricalprocess}.
\end{prop}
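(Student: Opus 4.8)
The plan is to exploit the fact that both integrals are built from the single Gaussian random field $F$ fixed in~\eqref{covarianceofrandomfield}, so that the identity can first be verified on elementary processes by direct computation and then propagated to all of $\mathcal{P}_{+}$ by an isometry-and-density argument. Before the equality itself, I would establish the stated inclusion $g\in L^{2}(\Omega\times[0,T];K)$: since $f$ is non-negative, for each $s$ one has the pointwise bound
\[
\|g(s)\|_{K}^{2}=\int_{\mathbb R^{N}}\int_{\mathbb R^{N}}g(s,x)f(x-y)g(s,y)\,\mathrm{d}y\,\mathrm{d}x\le \int_{\mathbb R^{N}}\int_{\mathbb R^{N}}|g(s,x)|\,f(x-y)\,|g(s,y)|\,\mathrm{d}y\,\mathrm{d}x,
\]
and integrating in $s$ and taking expectations gives $\mathbb E\int_{0}^{T}\|g(s)\|_{K}^{2}\,\mathrm{d}s\le\|g\|_{+}^{2}<\infty$. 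This is precisely membership in $L^{2}(\Omega\times[0,T];K)$, and it simultaneously shows that any convergence in $\|\cdot\|_{+}$ forces convergence in the $L^{2}(\Omega\times[0,T];K)$ norm, a fact I will reuse in the limiting step.

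Next I would verify the identity on an elementary process $g(s,x,\omega)=1_{(a,b]}(s)1_{A}(x)X(\omega)$ as in~\eqref{definitionofelementprocess}. On the martingale-measure side, the definition of $g\cdot M$ together with $M_{t}(A)=F(1_{[0,t]}(\cdot)1_{A}(*))$ collapses the left-hand integral to $X\,F\big(1_{(a,b]}(\cdot)1_{A}(*)\big)$. On the cylindrical side, writing $\tilde W_{t}(f_{j})=F(1_{[0,t]}f_{j})$ and using $\langle g(s),f_{j}\rangle_{K}=1_{(a,b]}(s)\,X\,\langle 1_{A},f_{j}\rangle_{K}$ reduces the right-hand integral to $X\sum_{j}\langle 1_{A},f_{j}\rangle_{K}\,F\big(1_{(a,b]}f_{j}\big)$. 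The two expressions agree because the orthonormal expansion $1_{A}=\sum_{j}\langle 1_{A},f_{j}\rangle_{K}f_{j}$ in $K$ yields $\sum_{j\le n}\langle 1_{A},f_{j}\rangle_{K}\,1_{(a,b]}f_{j}\to 1_{(a,b]}1_{A}$ in $K_{T}$, and $F$, being an isometry from $K_{T}$ into $L^{2}(\Omega)$, commutes with this convergent expansion, giving $\sum_{j}\langle 1_{A},f_{j}\rangle_{K}\,F(1_{(a,b]}f_{j})=F(1_{(a,b]}1_{A})$. Linearity of both integrals then extends the identity to all simple processes.

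Finally, for general $g\in\mathcal{P}_{+}$ I would choose simple processes $g_{n}$ with $\|g_{n}-g\|_{+}\to 0$ (such a sequence exists by the density used in Walsh's construction of $g\cdot M$); by the pointwise bound above this also gives $g_{n}\to g$ in $L^{2}(\Omega\times[0,T];K)$. Both stochastic integrals obey an $L^{2}(\Omega)$-isometry governed by the same quantity $\mathbb E\int_{0}^{T}\|g(s)\|_{K}^{2}\,\mathrm{d}s$, so $\int_{0}^{T}\int_{\mathbb R^{N}}g_{n}\,\mathrm{d}M$ and $\int_{0}^{T}g_{n}\,\mathrm{d}\tilde W$ are Cauchy and converge in $L^{2}(\Omega)$ to the respective integrals of $g$; since the two sides coincide for every simple $g_{n}$, passing to the limit yields the claim. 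I expect the main obstacle to be the careful justification that $F$ may be interchanged with the infinite orthonormal expansion of $1_{A}$: this rests on verifying that $1_{A}\in K$ for bounded Borel $A$ (needed already for $M_{t}(A)$ to be well defined) and on the isometric extension of $F$ to $K_{T}$, together with confirming that a single approximating sequence controls both integrals simultaneously, which is exactly what the inequality $\mathbb E\int_{0}^{T}\|g(s)\|_{K}^{2}\,\mathrm{d}s\le\|g\|_{+}^{2}$ secures.
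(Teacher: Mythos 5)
Your proposal is correct and follows essentially the same route as the source: the paper defers the detailed proof to \cite{Dalang2011}, but carries out exactly this argument (the bound $\mathbb E\int_0^T\|g(s)\|_K^2\,\mathrm{d}s\le\|g\|_{+}^2$, verification on elementary processes by reducing both sides to $X\,F(1_{(a,b]}(\cdot)1_A(*))$ via the orthonormal expansion of $1_A$ and linearity of $F$, then density) in its proof of the time-changed analogue, Theorem~\ref{connectionbetweentimechangedmartingalemeasureandcylindricalprocesses}. Your added care about identifying $1_A$ with an element of $K$ and about the $F$--expansion interchange is appropriate and consistent with how \cite{Dalang2011} handles those points.
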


Finally, Proposition~\ref{theoremonconnectioofcylindricalprocessandQWiener}  below provides conditions under which these integrals coincide with the integral with respect to a Q-Wiener process. Define the operator $J:K \rightarrow K$ by
\begin{eqnarray}\label{definitionofoperatorJ}
	J(h):=\sum_{j=1}^{\infty} \lambda_j^{1/2}\langle h, f_j\rangle_K f_j,~~~ h \in K,
\end{eqnarray}
where $\{f_j\}$ is an orthonormal basis in $K$ and $\lambda_{j} \geq 0$ satisfies 
\[
\sum_{j=1}^{\infty} \lambda_j < \infty.
\]
\begin{lem}\label{definitionofQfromoperatorJ}
	Let $Q=JJ^*: K\rightarrow K$. Then $Q$ has eigenvalues $\lambda_j$ corresponding to the eigenvectors $f_j$, i.e. $Qf_j=\lambda_jf_j$.
\end{lem}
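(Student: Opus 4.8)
The plan is to compute $Q=JJ^*$ directly on the orthonormal basis $\{f_j\}$ and simply read off the eigenvalues, since everything reduces to orthonormality once the adjoint is identified.

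First I would check that $J$ is a well-defined bounded operator, so that $Q=JJ^*$ makes sense. Because $\{f_j\}$ is an orthonormal basis, Parseval's identity gives $\|Jh\|_K^2=\sum_{j=1}^{\infty}\lambda_j|\langle h,f_j\rangle_K|^2\le(\sup_j\lambda_j)\|h\|_K^2$, so the defining series converges and $J$ is bounded. In fact $\sum_{k=1}^{\infty}\|Jf_k\|_K^2=\sum_{k=1}^{\infty}\lambda_k<\infty$ shows that $J$ is Hilbert--Schmidt; consequently $Q=JJ^*$ is a well-defined, nonnegative, symmetric, trace-class operator with $tr(Q)=\sum_{j=1}^{\infty}\lambda_j$, which also records the trace-class property needed elsewhere.

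Next I would identify the adjoint. For $h,g\in K$, expanding in the basis $\{f_j\}$ yields $\langle Jh,g\rangle_K=\sum_{j=1}^{\infty}\lambda_j^{1/2}\langle h,f_j\rangle_K\langle f_j,g\rangle_K$, and the same expression equals $\langle h,Jg\rangle_K$ precisely because each coefficient $\lambda_j^{1/2}$ is real and nonnegative. Hence $J$ is self-adjoint, $J^*=J$, and therefore $Q=J^2$. With this in hand the eigenvalue computation is immediate: by orthonormality $Jf_k=\sum_{j=1}^{\infty}\lambda_j^{1/2}\langle f_k,f_j\rangle_K f_j=\lambda_k^{1/2}f_k$, so that $Qf_k=J(Jf_k)=\lambda_k^{1/2}\,Jf_k=\lambda_k f_k$, which is exactly the claim.

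The argument is entirely routine, and there is no substantive obstacle; the only points that genuinely require care are the convergence justifications ensuring that $J$, $J^*$, and $Q$ are well defined and that the infinite sums may be manipulated term by term, both of which are supplied by the Hilbert--Schmidt bound $\sum_k\lambda_k<\infty$, together with the verification that $J^*=J$ so that $Q$ collapses to $J^2$.
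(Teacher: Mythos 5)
Your proof is correct and takes essentially the same route as the paper: a direct computation on the basis $\{f_j\}$ using $\langle J^{*}f_j,f_l\rangle_K=\langle f_j,Jf_l\rangle_K$ and orthonormality, which you merely repackage by first noting $J^{*}=J$ so that $Q=J^{2}$. The preliminary verification that $J$ is bounded (indeed Hilbert--Schmidt, so $Q$ is trace-class) is a small addition the paper omits, but it does not change the argument.
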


\begin{proof}
Let $f_{j}$ be the orthonormal basis as in~\eqref{definitionofoperatorJ}, then
	\begin{eqnarray*}
		\begin{aligned}
			Qf_{j}=(JJ^*)(f_{j})&=\sum_{l=1}^{\infty} \lambda_l^{1/2}\langle J^*f_{j}, f_l \rangle_K f_l = \sum_{l=1}^{\infty}\lambda_l^{1/2}\langle f_{\l}, Jf_j \rangle_K f_l\\
			&= \sum_{l=1}^{\infty}\lambda_l^{1/2}\langle f_{j}, \sum_{i=1}^{\infty} \lambda_i^{1/2} \langle f_l, f_i \rangle_K f_i \rangle_K f_l\\
			&= \sum_{l=1}^{\infty}\lambda_l\langle f_{j}, f_l \rangle_K f_l=\lambda_{j}f_{j}.
		\end{aligned}
	\end{eqnarray*}
\end{proof}

Additionally, $Q$ is symmetric (self-adjoint), non-negative definite, and $trQ=\sum_{j=1}^{\infty} \lambda_j < \infty$. The operator $J: K \rightarrow K_Q$ is an isometry since
\begin{eqnarray*}
	\|h\|_K = \|Q^{-1/2}J(h)\|_K = \|J(h)\|_{K_Q},~~~ h \in K,
\end{eqnarray*}
where $Q^{-1/2}$ denotes the pseudo-inverse of $Q^{1/2}$. Further, the inverse operator $J^{-1}: K_Q=Q^{1/2}K \rightarrow K$ is also an isometry. Therefore, taking $\{w_j(t)\}_{j=1}^{\infty}$ to be the family of independent Brownian motion processes defined by \ref{definitionofcylindricalprocess}, a Q-Wiener process in $K$ as defined in Section~\ref{timechangedQwienerprocess} can be constructed as
\begin{eqnarray}\label{specificdefinitionofQWienerprocess}
	W_t:=\sum_{j=1}^{\infty} w_j(t)J(f_j)=\sum_{j=1}^{\infty} w_j(t)\lambda_j^{1/2}f_j.
\end{eqnarray}

As seen in Section~\ref{stointegralwithtimechangedQwiener}, a predictable process $\phi$ will be integrable with respect to the $Q$-Wiener process $W_t$ if
\begin{eqnarray}\label{integralconditionofQWienerprocess}
\mathbb{E}\left(\int_0^T ||\phi(t)||^2_{\mathcal{L}_2(K_Q, H)}\mathrm{d}t\right) < \infty,
\end{eqnarray}
Consider $g \in L^2(\Omega \times [0,T]; K)$, and define the operator,~$\Phi_s^g$~:$K \rightarrow \mathbb{R}$, by
\begin{eqnarray}\label{defintionofoperatorPhi}
	\Phi_s^g(\eta)=\langle g(s), \eta \rangle_K,~~~ \eta \in K.
\end{eqnarray}
The following proposition is from Dalang and Quer-Sardanyons~\cite{Dalang2011}; the proof is included here since it contains information that will be used later.
\begin{prop}\label{theoremonconnectioofcylindricalprocessandQWiener}
	If  $g \in \mathcal{P_+}$ and $\Phi_t^g$ as defined in~\eqref{defintionofoperatorPhi}, then $\Phi_t^g \circ J^{-1}$ satisfies the condition~\eqref{integralconditionofQWienerprocess} and
	$$\int_0^T \Phi_t^g \circ J^{-1}\mathrm{d}W_t = \int_0^T g(t)\mathrm{d}\tilde W_t,$$
	where $W_{t}$ is the $Q$-Wiener process defined in~\eqref{specificdefinitionofQWienerprocess}, $J$ is the operator defined in~\eqref{definitionofoperatorJ} and $\tilde{W}_{t}$ is the cylindrical Wiener process defined in~\eqref{definitionofcylindricalprocess}.
\end{prop}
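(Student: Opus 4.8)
The plan is to reduce both stochastic integrals to the \emph{same} $L^2(\Omega)$-convergent series indexed by the orthonormal basis $\{f_j\}$ of $K$, and then to match them term by term. The bridge between the two sides is the explicit action of $J$ on the basis, established in~\eqref{definitionofoperatorJ} and Lemma~\ref{definitionofQfromoperatorJ}, together with the fact that the single family $\{w_j\}$ drives both the $Q$-Wiener process and the cylindrical Wiener process.

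First I would verify the integrability condition~\eqref{integralconditionofQWienerprocess}. Since $J(f_j)=\lambda_j^{1/2}f_j$, the inverse isometry $J^{-1}:K_Q\to K$ sends the orthonormal basis element $\lambda_j^{1/2}f_j$ of $K_Q$ back to $f_j$, so that $(\Phi_t^g\circ J^{-1})(\lambda_j^{1/2}f_j)=\Phi_t^g(f_j)=\langle g(t),f_j\rangle_K$. Summing the squares over the basis $\{\lambda_j^{1/2}f_j\}$ of $K_Q$ then gives
\[
\|\Phi_t^g\circ J^{-1}\|_{\mathcal{L}_2(K_Q,\mathbb{R})}^2=\sum_{j=1}^{\infty}\langle g(t),f_j\rangle_K^2=\|g(t)\|_K^2 .
\]
Because $g\in\mathcal{P}_+$ lies in $L^2(\Omega\times[0,T];K)$ by Proposition~\ref{theoremonconnectionofmartingaleandcylindrical}, integrating in $t$ and taking expectation yields a finite quantity, which establishes~\eqref{integralconditionofQWienerprocess}.

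Next I would compute the $Q$-Wiener integral using Lemma~\ref{stochintwithqwiener}, which expresses it as $\sum_{j}\int_0^T(\Phi_t^g\circ J^{-1})(\lambda_j^{1/2}f_j)\,\mathrm{d}\langle W_t,\lambda_j^{1/2}f_j\rangle_{K_Q}$. The first factor is $\langle g(t),f_j\rangle_K$ from the previous step. For the integrator, expanding $W_t=\sum_i w_i(t)\lambda_i^{1/2}f_i$ from~\eqref{specificdefinitionofQWienerprocess} and inserting the explicit $K_Q$ scalar product, a short computation collapses the double sum to $\langle W_t,\lambda_j^{1/2}f_j\rangle_{K_Q}=w_j(t)$. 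Hence the $Q$-Wiener integral equals $\sum_{j}\int_0^T\langle g(t),f_j\rangle_K\,\mathrm{d}w_j(t)$. On the cylindrical side, the defining formula for $\int_0^T g(t)\,\mathrm{d}\tilde W_t$ is precisely $\sum_j\int_0^T\langle g(t),f_j\rangle_K\,\mathrm{d}\tilde W_t(f_j)$, and since the scalar Brownian motions in~\eqref{specificdefinitionofQWienerprocess} are, by construction, $w_j(t)=\tilde W_t(f_j)$ in terms of the cylindrical process~\eqref{definitionofcylindricalprocess}, the two series are literally identical.

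The two scalar-product computations are routine; the only genuine point requiring care is the legitimacy of matching term by term two series that a priori converge only in $L^2(\Omega)$. This is resolved by observing that each individual summand $\int_0^T\langle g(t),f_j\rangle_K\,\mathrm{d}w_j(t)$ is common to both expansions, so their partial sums coincide and therefore so do their $L^2(\Omega)$-limits, the isometry from the integrability step guaranteeing convergence on both sides. I expect the main obstacle to be bookkeeping: everything hinges on keeping the identifications $J^{-1}(\lambda_j^{1/2}f_j)=f_j$ and $\langle W_t,\lambda_j^{1/2}f_j\rangle_{K_Q}=w_j(t)$ straight, since these are exactly what tie the single family $\{w_j\}$ through~\eqref{definitionofoperatorJ} and~\eqref{specificdefinitionofQWienerprocess} to both the $Q$-Wiener and cylindrical integrals.
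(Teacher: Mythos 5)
Your proposal is correct and follows essentially the same route as the paper: the same computation showing $\|\Phi_t^g\circ J^{-1}\|_{\mathcal{L}_2(K_Q,\mathbb{R})}^2=\|g(t)\|_K^2$ (hence the integrability condition), followed by the same term-by-term reduction via Lemma~\ref{stochintwithqwiener} using $J^{-1}(\lambda_j^{1/2}f_j)=f_j$, $\langle W_t,\lambda_j^{1/2}f_j\rangle_{K_Q}=w_j(t)$, and $w_j(t)=\tilde W_t(f_j)$. Your explicit remark on why matching the two $L^2(\Omega)$-convergent series term by term is legitimate is a small but welcome addition that the paper leaves implicit.
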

\begin{proof}
First, to show that $\Phi_t^g \circ J^{-1} \in \mathcal{L}_2(K_Q, \mathbb{R})$, note that
	\begin{eqnarray*}
		\begin{aligned}
			||\Phi_t^g \circ J^{-1}||^2_{\mathcal{L}^2(K_Q, \mathbb{R})} &= \sum_{j=1}^{\infty} [(\Phi_t^g \circ J^{-1})(\lambda_j^{1/2} f_j)]^2 = \sum_{j=1}^{\infty} [\Phi_t^g (J^{-1}\lambda_j^{1/2} f_j)]^2\\
			&= \sum_{j=1}^{\infty} \langle g(t), J^{-1}Q^{1/2} f_j \rangle_K^2 =\sum_{j=1}^{\infty} \langle g(t), f_j \rangle_K^2\\
			&=||g(t)||_K^2.
		\end{aligned}
	\end{eqnarray*}
	Therefore,
\begin{eqnarray*}
	\mathbb{E}\left(\int_0^T ||\Phi_t^g \circ J^{-1}||^2_{\mathcal{L}_2(K_Q, \mathbb{R})}\mathrm{d}t\right)= \mathbb{E}\left(\int_0^T ||g(t)||_K^2\mathrm{d}t\right) < \infty,
\end{eqnarray*}
	since $g \in \mathcal{P}_{+}$ implies that $g \in L^2(\Omega \times [0,T]; K)$.  Thus, $\Phi_t^g \circ J^{-1}$ satisfies condition~\eqref{integralconditionofQWienerprocess}. Also, from Lemma~\ref{stochintwithqwiener},
	\begin{align*}
	\int_0^T \Phi_t^g \circ J^{-1} dW_t &= \sum_{j=1}^{\infty} \int_0^T (\Phi_t^g \circ J^{-1})(\lambda_j^{1/2} f_j)\mathrm{d}\langle W_t, \lambda_j^{1/2} f_j \rangle_{K_Q}\\
	&= \sum_{j=1}^{\infty} \int_0^T \langle g(t), f_j \rangle_K \mathrm{d}w_j(t)\\
	&= \sum_{j=1}^{\infty} \int_0^T \langle g(t), f_j \rangle_K\mathrm{d}\tilde W_t(f_j)\\
	&=\int_0^T g(t)\mathrm{d}\tilde W_t.
	\end{align*}
\end{proof}
Therefore, combining Propositions~\ref{theoremonconnectionofmartingaleandcylindrical} and~\ref{theoremonconnectioofcylindricalprocessandQWiener} yields the desired integral connections. 

\begin{cor}
	For $g \in \mathcal{P_+}$ and $\Phi_t^g$ as defined in~\eqref{defintionofoperatorPhi}, 
	\begin{eqnarray*}
		\int_{0}^{T}\int_{\mathbb{R}^N}^{}g(t,x)M(\mathrm{d}t, \mathrm{d}x) = \int_{0}^{T}g(t)\mathrm{d}\tilde W_t = \int_0^T \Phi_t^g \circ J^{-1}\mathrm{d}W_t,
	\end{eqnarray*}
where $M$ is the martingale measure defined in~\eqref{expressionofmartingalemeasure}, $\tilde{W}_{t}$ is the cylindrical Wiener process defined in~\eqref{definitionofcylindricalprocess}, $J$ is the operator defined in ~\eqref{definitionofoperatorJ} and $W_{t}$ is the $Q$-Wiener process defined in~\eqref{specificdefinitionofQWienerprocess}.
\end{cor}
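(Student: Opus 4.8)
The plan is to obtain the claimed triple equality by transitivity, chaining together the two propositions already established in this subsection. The key observation is that both Proposition~\ref{theoremonconnectionofmartingaleandcylindrical} and Proposition~\ref{theoremonconnectioofcylindricalprocessandQWiener} take as their single hypothesis that $g \in \mathcal{P}_+$, which is precisely the hypothesis of the corollary. Thus no additional integrability work is needed: Proposition~\ref{theoremonconnectionofmartingaleandcylindrical} already records that $g \in \mathcal{P}_+$ implies $g \in L^2(\Omega \times [0,T]; K)$, and Proposition~\ref{theoremonconnectioofcylindricalprocessandQWiener} already records that $\Phi_t^g \circ J^{-1}$ satisfies the condition~\eqref{integralconditionofQWienerprocess} required for the $Q$-Wiener integral to be well defined.

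First I would apply Proposition~\ref{theoremonconnectionofmartingaleandcylindrical} to identify the martingale-measure (Walsh) integral with the cylindrical Wiener integral, namely $\int_0^T \int_{\mathbb{R}^N} g(t,x)\,M(\mathrm{d}t,\mathrm{d}x) = \int_0^T g(t)\,\mathrm{d}\tilde{W}_t$. Next I would apply Proposition~\ref{theoremonconnectioofcylindricalprocessandQWiener} to identify the cylindrical Wiener integral with the $Q$-Wiener integral, namely $\int_0^T g(t)\,\mathrm{d}\tilde{W}_t = \int_0^T \Phi_t^g \circ J^{-1}\,\mathrm{d}W_t$. Combining these two equalities by transitivity immediately yields the desired chain of three equal integrals.

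Since both propositions are quoted directly from~\cite{Dalang2011} (and, for the martingale-measure identification, also~\cite{Karczewska2005}), there is no genuine obstacle here; the entire content lies in the two propositions, not in their combination. The only point that warrants a brief check is that the two intermediate expressions $\int_0^T g(t)\,\mathrm{d}\tilde{W}_t$ appearing in the two propositions refer to the \emph{same} cylindrical Wiener integral against the same process $\tilde{W}_t$ defined in~\eqref{definitionofcylindricalprocess}, so that the transitive step is literal rather than merely formal. This is immediate from the fact that all the relevant objects, namely $F$, $K$, $\tilde{W}_t$, $J$, $Q$, and $W_t$, were fixed once and for all at the start of the subsection in terms of the single random field chosen in~\eqref{covarianceofrandomfield}, so that no reconciliation of differing constructions is required.
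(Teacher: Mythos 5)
Your proposal is correct and matches the paper exactly: the paper also obtains the corollary by simply combining Proposition~\ref{theoremonconnectionofmartingaleandcylindrical} with Proposition~\ref{theoremonconnectioofcylindricalprocessandQWiener}, both under the single hypothesis $g\in\mathcal{P}_+$. Your added remark that the intermediate cylindrical integral is literally the same object in both propositions is a sensible sanity check but not something the paper spells out.
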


\subsection{The case of a time change}\label{connectionsontimechangecase}
The previous section summarized results from~\cite{Karczewska2005,Dalang2011, Dalang2009} establishing the equality of integrals with respect to a martingale measure, a cylindrical Wiener process, and a Q-Wiener process.  This section will extend those results to the time-changed case.  The procedure for showing the equivalence of the integrals will be very similar to that used in the previous section.  The same random field $F$ and Hilbert space $K$ are used to define time-changed versions of a cylindrical Wiener process and a martingale measure.  Their associated integrals are then shown to be equal.  Finally, a connection between the given time-changed cylindrical Wiener process and a time-changed Q-Wiener process leads to the equality of all three integrals.

First, recall that the time change,~$E_{t}$, is the inverse of a $\beta$-stable subordinator. Then, the time-changed cylindrical Wiener process is defined as follows.

\begin{defn}\label{definitionoftimechangedcylindricalprocess}
 Let $K$ be a separable Hilbert space. A family of random variables $\{\tilde W_{E_t},t\geq 0\}$
	is a time-changed cylindrical Wiener process on $K$ if the following conditions hold:
	\begin{enumerate}
		\item for any $k\in K$, $\{\tilde W_{E_t}(k),t\geq 0\}$ defines a time-changed Brownian motion with mean 0 and variance $\mathbb E(E_t)\langle k,k \rangle_{K}$; and
		
		\item for all $s,t\in R_{+}$ and $k,h\in K$,
		\begin{eqnarray*}
			\mathbb E (\tilde W_{E_s}(k)\tilde W_{E_t}(h))=\mathbb E(E_{s\wedge t})\langle k,h \rangle_{K}.
		\end{eqnarray*}
	\end{enumerate}
\end{defn}
Let $g: R^{+}\times \Omega\rightarrow K$ be any predictable process such that
\begin{eqnarray}\label{spaceofsquareintegrablefunctionwithrespecttotimchange}
\mathbb E\left(\int_{0}^{T}\|g(s)\|_{K}^2\mathrm{d}E_s\right)<\infty.
\end{eqnarray}
Consider the series 
\begin{eqnarray*}
	\sum_{j=1}^{\infty}\int_{0}^{T}\langle g_s, f_j\rangle_{K}\mathrm{d}\tilde W_{E_s}(f_j).
\end{eqnarray*}
Convergence of this series in $L^2(\Omega, \mathcal G, \mathbb{P})$ is established in Proposition \ref{convergenceofseriesinthedefinitionoftimechangedcylindricalintegral}. This justifies defining 
the stochastic integral of $g$ with respect to a time-changed cylindrical Wiener process as follows:
\begin{eqnarray*}
	\int_0^T g(s)\mathrm{d}\tilde W_{E_s}:=\sum_{j=1}^{\infty}\int_{0}^{T}\langle g_s, f_j\rangle_{K}\mathrm{d}\tilde W_{E_s}(f_j).
\end{eqnarray*}

\begin{prop}\label{convergenceofseriesinthedefinitionoftimechangedcylindricalintegral}
	 Let $g:\mathbb R^{+}\times \Omega\rightarrow K$ in~\eqref{definitionofHilbertspaceKinconnectionsection} be any predictable process satisfying condition~\eqref{spaceofsquareintegrablefunctionwithrespecttotimchange}. Then, the series
\begin{eqnarray}\label{definitionoftimechangedcylindricalintegral}
     \sum_{j=1}^{\infty}\int_{0}^{T}\langle g_s, f_j\rangle_{K}\mathrm{d}\tilde W_{E_s}(f_j)
\end{eqnarray}
converges in $L^2(\Omega, \mathcal G, \mathbb{P})$. 	
\end{prop}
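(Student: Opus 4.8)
The plan is to show that the partial sums $S_N := \sum_{j=1}^{N}\int_0^T \langle g_s, f_j\rangle_K\,\mathrm{d}\tilde{W}_{E_s}(f_j)$ form a Cauchy sequence in $L^2(\Omega,\mathcal{G},\mathbb{P})$; since $L^2$ is complete, this yields the asserted convergence. Write $X_j := \int_0^T \langle g_s, f_j\rangle_K\,\mathrm{d}\tilde{W}_{E_s}(f_j)$, which is well defined because each $\{\tilde{W}_{E_t}(f_j)\}_{t\geq 0}$ is a real-valued time-changed Brownian motion with quadratic variation $E_t$ and $s\mapsto\langle g_s, f_j\rangle_K$ is predictable and square-integrable against $\mathrm{d}E_s$. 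For $M<N$,
\begin{eqnarray*}
\mathbb{E}\big(|S_N - S_M|^2\big) = \sum_{j=M+1}^{N}\mathbb{E}\big(X_j^2\big) + \sum_{\substack{M< j,j'\leq N\\ j\neq j'}}\mathbb{E}\big(X_j X_{j'}\big),
\end{eqnarray*}
so it suffices to prove (i) the cross terms $\mathbb{E}(X_j X_{j'})$ vanish for $j\neq j'$, and (ii) the diagonal series $\sum_{j=1}^{\infty}\mathbb{E}(X_j^2)$ converges.

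For (i), I would argue as in Lemma~\ref{crossproducttermofmartingale}. Each $\tilde{W}_{E_t}(f_j)$ is the time-change by the common $E_t$ of one of the mutually independent Brownian motions $w_j$ underlying the cylindrical process, and $E_t$ is independent of all of them. The cleanest route is to establish orthogonality first for elementary integrands, where each $X_j$ is a finite sum of terms $c_{j,k}\,(\tilde{W}_{E_{t_{k+1}}}(f_j)-\tilde{W}_{E_{t_k}}(f_j))$ with $c_{j,k}$ a $\mathcal{G}_{t_k}$-measurable coefficient. Expanding the product $X_j X_{j'}$ and taking conditional expectations reduces every summand to an expression of the kind treated in Lemma~\ref{crossproducttermofmartingale}, which vanishes by conditioning on $E_t$ and using the martingale property together with the independence of the distinct driving Brownian motions $w_j$ and $w_{j'}$. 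The general case then follows by $L^2$-approximation of $\langle g, f_j\rangle_K$ and $\langle g, f_{j'}\rangle_K$ by elementary integrands: the bilinear form $(g,h)\mapsto\mathbb{E}(X_j(g)X_{j'}(h))$ is continuous in each argument by the isometry below, so vanishing on a dense set forces $\mathbb{E}(X_j X_{j'})=0$ in general.

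For (ii), each $X_j$ is a real-valued stochastic integral against the time-changed Brownian motion $\tilde{W}_{E_t}(f_j)$, so the real-valued time-changed It\^o isometry (the scalar case of the Time-changed It\^o Isometry established above, cf.~\cite{Kei2011}) gives $\mathbb{E}(X_j^2)=\mathbb{E}\int_0^T \langle g_s, f_j\rangle_K^2\,\mathrm{d}E_s$. Summing over $j$, interchanging sum and integral by Tonelli's theorem (all terms being nonnegative), and invoking Parseval's identity for the orthonormal basis $\{f_j\}$ yields
\begin{eqnarray*}
\sum_{j=1}^{\infty}\mathbb{E}\big(X_j^2\big) = \mathbb{E}\int_0^T \sum_{j=1}^{\infty}\langle g_s, f_j\rangle_K^2\,\mathrm{d}E_s = \mathbb{E}\int_0^T \|g_s\|_K^2\,\mathrm{d}E_s < \infty,
\end{eqnarray*}
where finiteness is precisely hypothesis~\eqref{spaceofsquareintegrablefunctionwithrespecttotimchange}. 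Combining (i) and (ii), $\mathbb{E}(|S_N-S_M|^2)=\sum_{j=M+1}^{N}\mathbb{E}(X_j^2)\to 0$ as $M,N\to\infty$, so $(S_N)$ is Cauchy and the series converges in $L^2(\Omega,\mathcal{G},\mathbb{P})$.

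The main obstacle is step (i): one must show that the cross terms of the two \emph{stochastic integrals}, rather than of isolated increments, vanish. Lemma~\ref{crossproducttermofmartingale} supplies the increment-level statement directly, so the genuine work is the passage from elementary integrands to general $g$ satisfying~\eqref{spaceofsquareintegrablefunctionwithrespecttotimchange}, namely checking that the $L^2$-continuity of the integral map is strong enough to preserve orthogonality in the limit. Once this is secured, (ii) is routine.
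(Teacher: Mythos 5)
Your proposal is correct and follows the paper's overall strategy exactly: form the partial sums, show they are Cauchy in $L^2(\Omega,\mathcal G,\mathbb P)$, split $\mathbb E(|S_N-S_M|^2)$ into a diagonal part and a cross part, handle the diagonal part by the real-valued time-changed It\^o isometry together with Tonelli and Parseval, and show the cross part vanishes. Your part (ii) is essentially verbatim what the paper does.

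The one place where you diverge is the mechanism for killing the cross terms. The paper does not go back to elementary integrands: it observes that for $i\neq j$ the product $w_i(E_t)w_j(E_t)$ is itself a square-integrable martingale (by an argument parallel to Theorem~\ref{martingaleoftimechagedQwiener}), concludes that the quadratic covariation $[w_i(E_\cdot),w_j(E_\cdot)]$ vanishes identically, and then invokes the covariation identity
$\mathbb E\bigl(\int_0^T \langle g_s,f_j\rangle_K\,\mathrm{d}w_j(E_s)\int_0^T \langle g_s,f_i\rangle_K\,\mathrm{d}w_i(E_s)\bigr)=\mathbb E\int_0^T \langle g_s,f_j\rangle_K\langle g_s,f_i\rangle_K\,\mathrm{d}[w_j(E_s),w_i(E_s)]=0$
directly for general predictable $g$. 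Your route instead proves orthogonality at the level of elementary integrands via Lemma~\ref{crossproducttermofmartingale} and then passes to the limit using the continuity of the bilinear form $(g,h)\mapsto\mathbb E(X_j(g)X_{j'}(h))$, which Cauchy--Schwarz and the isometry do supply. Both arguments are sound. The paper's version is shorter once one accepts the covariation identity for integrals against orthogonal $L^2$-martingales (which, if unpacked, requires essentially the approximation you describe); your version is more self-contained in that it leans only on the increment-level Lemma~\ref{crossproducttermofmartingale} already proved in the paper, at the cost of spelling out the density step. One small caution: as in the paper's own use of that lemma in Theorem~\ref{itoisometryofelementaryprocesses}, what you really need when the $\mathcal G_{t_k}$-measurable coefficients are pulled inside is that the \emph{conditional} expectation $\mathbb E(\Delta^j_k\Delta^{j'}_k\mid\mathcal G_{t_k})$ vanishes almost surely, not merely its expectation; the proof of the lemma does establish this, but it is worth stating explicitly.
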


\begin{proof}
Let 
\begin{eqnarray*}
	Y_n:=\sum_{j=1}^{n} \int_0^T \langle g_s, f_j \rangle_K\mathrm{d}\tilde W_{E_s}(f_j).
\end{eqnarray*}
In order to show the convergence of the series defined in~\eqref{definitionoftimechangedcylindricalintegral}, it is sufficient to show $\{Y_n\}$ is a Cauchy sequence in $L^2(\Omega, \mathcal{G}, \mathbb{P})$.  For $n>m$,
\begin{eqnarray*}
	\begin{aligned}
		||Y_n &- Y_m||_2^2= \mathbb{E} \bigg( \sum_{j=m+1}^{n} \int_0^T \langle g_s, f_j \rangle_K \mathrm{d}\tilde W_{E_s}(f_j) \bigg)^2\\
		&= \mathbb{E} \bigg[\bigg( \sum_{j=m+1}^{n} \int_0^T \langle g_s, f_j \rangle_K \mathrm{d}\tilde W_{E_s}(f_j) \bigg)\bigg( \sum_{i=m+1}^{n} \int_0^T \langle g_s, f_i \rangle_K \mathrm{d}\tilde W_{E_s}(f_i) \bigg)\bigg]\\
		&= \mathbb{E} \bigg( \sum_{j=i=m+1}^{n} \bigg[ \int_0^T \langle g_s, f_j \rangle_K\mathrm{d}\tilde W_{E_s}(f_j) \bigg]^2 \bigg)\\
		&\qquad + \mathbb{E} \bigg( \sum_{j\neq i=m+1}^{n} \bigg[ \int_0^T \langle g_s, f_j \rangle_K \mathrm{d}\tilde W_{E_s}(f_j) \bigg]\bigg[ \int_0^T \langle g_s, f_i \rangle_K\mathrm{d}\tilde W_{E_s}(f_i) \bigg] \bigg)\\
		&=: I + II.
	\end{aligned}
\end{eqnarray*}

Since the time-changed $Q$-Wiener process $W_{E_{t}}$ defined in~\eqref{defoftimechangedQwiener} is a square integrable martingale with respect to the filtration~$\mathcal{G}_{t} = \tilde{\mathcal{F}}_{E_t}$ defined in~\eqref{generalizationoffiltration}, for $h = \lambda^{-1/2}_{j}f_{j} \in K, j = 1, 2, \cdots$, $\langle W_{E_{t}}, h\rangle_{K}$ is also a square integrable martingale with respect to the filtration~$\mathcal{G}_{t}$, i.e., for $0 < s < t$,
\begin{eqnarray*}
    \mathbb{E}(\langle W_{E_{t}}, h\rangle_{K}|\mathcal{G}_{s}) = \langle W_{E_{s}}, h\rangle_{k} .
\end{eqnarray*}
 This implies that each projection, which is a time-changed Brownian motion, $w_{j}(E_{t}),~j = 1, 2, \cdots$, is also a square integrable martingale with respect to the same filtration $\mathcal{G}_{t}$, i.e., for $0<s<t$, $\displaystyle\mathbb{E}(w_{j}(E_{t})|\mathcal{G}_{s}) = w_{j}(E_{s})$. Therefore, the integral,
\begin{eqnarray*}
	\int_0^T \langle g_s, f_j \rangle_K\mathrm{d}w_j(E_s),
\end{eqnarray*}
is also a square integral martingale with respect to the filtration~$\mathcal{G}_{t}$, and it follows from the It\^o isometry that
\begin{eqnarray*}
\mathbb{E} \bigg[ \int_0^T \langle g_s, f_j \rangle_K\mathrm{d}w_j(E_s)\bigg]^2 = \mathbb{E} \bigg[ \int_0^T \langle g_s, f_j \rangle_K^2\mathrm{d}E_s \bigg].
\end{eqnarray*}
Further, by a proof similar to that of Theorem~\ref{martingaleoftimechagedQwiener}, the product $w_{i}(E_{t})w_{j}(E_{t})$ is a square integrable martingale for $i\neq j$. This means the quadratic covariation process of martingales $w_{i}(E_{t})$ and $w_{j}(E_{t})$ is zero, i.e., $[w_{i}(E_{t}), w_{j}(E_{t})] = 0$. Thus, using the martingale property of $w_j(E_s)$ and its associated integral, along with the Cauchy-Schwartz inequality
	\begin{eqnarray}\label{boundofsquareintegralofcauchy}
		\begin{aligned}
				I&:= \mathbb{E} \bigg( \sum_{j=i=m+1}^{n} \bigg[ \int_0^T \langle g_s, f_j \rangle_K\mathrm{d}\tilde W_{E_s}(f_j) \bigg]^2 \bigg)\\
				&=  \sum_{j=i=m+1}^{n} \mathbb{E} \bigg[ \int_0^T \langle g_s, f_j \rangle_K\mathrm{d}w_j(E_s)\bigg]^2\\
				&=  \sum_{j=i=m+1}^{n} \mathbb{E} \bigg[ \int_0^T \langle g_s, f_j \rangle_K^2\mathrm{d}E_s \bigg] \leq \sum_{j=i=m+1}^{\infty}\mathbb{E} \bigg[ \int_0^T \langle g_s, f_j \rangle_K^2\mathrm{d}E_s \bigg].
		\end{aligned}
	\end{eqnarray}
	By assumption~\eqref{spaceofsquareintegrablefunctionwithrespecttotimchange},
	\begin{eqnarray*}
		\sum_{j=1}^{\infty}\mathbb{E} \bigg[ \int_0^T \langle g_s, f_j \rangle_K^2\mathrm{d}E_s \bigg] = \mathbb{E} \bigg[ \int_0^T \| g_s \|_K^2\mathrm{d}E_s \bigg] < \infty,
	\end{eqnarray*} 
	so the above tail of the partial sum in~\eqref{boundofsquareintegralofcauchy} converges to $0$ as $m~ (\text{hence}~n)\to\infty$.
	Meanwhile, 
	\begin{eqnarray}\label{boundcrossintegralsofcauchy}
		\begin{aligned}
				II&:= \mathbb{E} \bigg( \sum_{j\neq i=m+1}^{n} \bigg[ \int_0^T \langle g_s, f_j \rangle_K\mathrm{d}\tilde W_{E_s}(f_j) \bigg]\bigg[ \int_0^T \langle g_s, f_i \rangle_K \mathrm{d}\tilde W_{E_s}(f_i) \bigg] \bigg)\\
				&=\sum_{j\neq i=m+1}^{n} \mathbb{E} \bigg[\bigg( \int_0^T \langle g_s, f_j \rangle_K\mathrm{d}\tilde W_{E_s}(f_j) \bigg)\bigg( \int_0^T \langle g_s, f_i \rangle_K\mathrm{d}\tilde W_{E_s}(f_i) \bigg) \bigg]\\
				&=\sum_{j\neq i=m+1}^{n} \mathbb{E} \bigg[\bigg( \int_0^T \langle g_s, f_j \rangle_K\mathrm{d}w_j(E_s) \bigg)\bigg( \int_0^T \langle g_s, f_i \rangle_K\mathrm{d}w_i(E_s) \bigg) \bigg]\\
				&= \sum_{j\neq i=m+1}^{n} \mathbb{E} \bigg( \int_0^T \langle g_s, f_j \rangle_K \langle g_s, f_i \rangle_K\mathrm{d}[w_j(E_s),w_i(E_s)] \bigg)\\
				&= 0.
		\end{aligned}
	\end{eqnarray}
Combining~\eqref{boundofsquareintegralofcauchy} and~\eqref{boundcrossintegralsofcauchy} yields $||Y_n-Y_m||_2^2 = I + II\to 0$ as $n, m\to\infty$. Thus, $\{Y_n\}$ is a Cauchy sequence in $L^2(\Omega, \mathcal{G}, \mathbb{P})$, completing the proof.
\end{proof}

The next proposition shows how to define a cylindrical process from the fixed random field $F$ chosen in~\eqref{covarianceofrandomfield} .

\begin{prop}\label{definitionoftimechangedmartingalemeasureproposition} 
	For $t\geq 0$ and $\phi\in K$, set 
	\begin{eqnarray}\label{definitionoftimechangedcylindricalprocessexpression}
	\tilde W_{E_t}(\phi):=F(1_{[0,E_t]}(\cdot)\phi(*)).
	\end{eqnarray}
	Then, the process $\tilde W_{E_t}$ is a time-changed cylindrical Wiener process. 
\end{prop}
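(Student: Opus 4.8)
The plan is to verify the two defining properties in Definition~\ref{definitionoftimechangedcylindricalprocess} by realizing $\tilde W_{E_t}$ as the composition of the (non--time-changed) cylindrical Wiener process $\tilde W_t(\phi)=F(1_{[0,t]}(\cdot)\phi(*))$ from~\eqref{definitionofcylindricalprocess} with the inverse stable subordinator $E_t$. The key structural fact I will use is that $E_t$ is independent of the random field $F$, and hence of the entire family $\{\tilde W_s(\phi):s\geq 0,\ \phi\in K\}$; this is consistent with the standing assumption that the driving Brownian motions $w_j$, which satisfy $\tilde W_s(f_j)=w_j(s)$, are independent of $E_t$. First I would observe that $\tilde W_{E_t}(\phi)$ is well defined: for fixed $\phi$ the map $s\mapsto \tilde W_s(\phi)$ is, by property~(1) of a cylindrical Wiener process, a Brownian motion and so admits a continuous version, so evaluating it at the independent random time $E_t$ produces a genuine random variable equal to $F(1_{[0,E_t]}(\cdot)\phi(*))$.

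For property~(1), fix $k\in K$. Property~(1) for $\tilde W$ gives that $\{\tilde W_s(k)\}_{s\geq 0}$ is a mean-zero Brownian motion with variance $s\langle k,k\rangle_K$. Since $E_t$ is independent of this Brownian motion, the composition $\tilde W_{E_t}(k)$ is by definition a time-changed Brownian motion. Conditioning on $E_t$ (with density $f_{E_t}$) yields $\mathbb E(\tilde W_{E_t}(k))=\mathbb E(\mathbb E(\tilde W_{E_t}(k)\mid E_t))=0$ and
\begin{eqnarray*}
\mathbb E\big(\tilde W_{E_t}(k)^2\big) &=& \int_0^\infty \mathbb E\big(\tilde W_\tau(k)^2\big)f_{E_t}(\tau)\,\mathrm{d}\tau\\
&=& \langle k,k\rangle_K\int_0^\infty \tau f_{E_t}(\tau)\,\mathrm{d}\tau=\mathbb E(E_t)\langle k,k\rangle_K,
\end{eqnarray*}
which is the required mean and variance.

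For property~(2), fix $s,t\geq 0$ and $k,h\in K$, and condition on the pair $(E_s,E_t)$. Using independence together with property~(2) for $\tilde W$, namely $\mathbb E(\tilde W_\sigma(k)\tilde W_\tau(h))=(\sigma\wedge\tau)\langle k,h\rangle_K$, I obtain
\begin{eqnarray*}
\mathbb E\big(\tilde W_{E_s}(k)\tilde W_{E_t}(h)\big) &=& \mathbb E\big((E_s\wedge E_t)\langle k,h\rangle_K\big)\\
&=& \mathbb E(E_s\wedge E_t)\langle k,h\rangle_K.
\end{eqnarray*}
Because $E$ is nondecreasing, being the inverse of a subordinator, one has $E_s\wedge E_t=E_{s\wedge t}$ almost surely, so the right-hand side equals $\mathbb E(E_{s\wedge t})\langle k,h\rangle_K$, exactly as demanded by Definition~\ref{definitionoftimechangedcylindricalprocess}.

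I expect the main obstacle to be the rigorous justification of the conditioning arguments rather than any hard estimate: one must confirm that $E_t$ is genuinely independent of $F$, so that the conditional laws of $\tilde W_{E_t}(k)$ and of $\tilde W_{E_s}(k)\tilde W_{E_t}(h)$ given the time change coincide with the unconditional laws of $\tilde W_\tau(k)$ and $\tilde W_\sigma(k)\tilde W_\tau(h)$, and that the continuity and measurability of $s\mapsto \tilde W_s(\phi)$ make the random time substitution legitimate. Once this independence and the monotonicity identity $E_s\wedge E_t=E_{s\wedge t}$ are in place, both defining properties follow at once from the corresponding properties of the cylindrical Wiener process $\tilde W_t$.
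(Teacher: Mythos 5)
Your proposal is correct and follows essentially the same route as the paper: both proofs condition on the independent time change $E_t$ and reduce the mean, variance, and covariance computations to those of the non--time-changed process, using $E_s\wedge E_t=E_{s\wedge t}$ for the covariance. The only cosmetic difference is that the paper evaluates the covariance functional~\eqref{covarianceofrandomfield} of $F$ directly on the random indicators $1_{[0,E_t]}$, whereas you route the same computation through the already-established properties of $\tilde W_\tau$; these are equivalent.
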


\begin{proof}
Consider a fixed $\phi \in K$.  $\tilde W_{E_t}(\phi)$ is a time-changed Brownian motion by construction. Additionally,
	\begin{eqnarray*}
		\begin{aligned}
			\mathbb{E}[\tilde W_{E_t}(\phi)] &= \mathbb{E}[F(1_{[0,E_t]}(\cdot)\phi(*))] = \int_0^{\infty} \mathbb{E} [F(1_{[0,\tau]}(\cdot)\phi(*))]f_{E_t}(\tau) \mathrm{d}\tau\\
			&= \int_0^{\infty} 0 \cdot f_{E_t}(\tau) \mathrm{d}\tau=0,
		\end{aligned}
	\end{eqnarray*}
and 
	\begin{eqnarray*}
		\begin{aligned}
				\mathbb{E}[\tilde W_{E_t}(\phi)&\tilde W_{E_t}(\phi)] = \mathbb{E}[F(1_{[0,E_t]}(\cdot)\phi(*))F(1_{[0,E_t]}(\cdot)\phi(*))]\\
				~~~&= \mathbb{E} \left[ \int_{\mathbb{R}^{+}} 1_{[0,E_t]}(s) 1_{[0,E_t]}(s) \int_{\mathbb{R}^{N}}\int_{\mathbb{R}^{N}} \phi(x) f(x-y) \phi(y) \mathrm{d}y \mathrm{d}x \mathrm{d}s \right] \\
				~~~&= \mathbb{E} \left[ \langle \phi, \phi \rangle_K \int_{\mathbb{R}^{+}} 1_{[0,E_t]}(s) \mathrm{d}s\right]
				= \mathbb{E}(E_t)\langle \phi, \phi \rangle_K.
		\end{aligned}
	\end{eqnarray*}
Further, for fixed $s,t \in \mathbb{R}^+$ and $\phi, \psi \in K$,
\begin{eqnarray*}
	\begin{aligned}
		\mathbb{E}[\tilde W_{E_t}(\phi) &\tilde W_{E_s}(\psi)] = \mathbb{E}[F(1_{[0,E_t]}(\cdot)\phi(*))F(1_{[0,E_s]}(\cdot)\psi(*))]\\
		~~~&=\mathbb{E} \int_{\mathbb{R}^+} 1_{[0,E_t]}(r)1_{[0,E_s]}(r) \int_{\mathbb{R}^N} \int_{\mathbb{R}^N} \phi(x)f(x-y)\psi(y) dy dx dr\\
		~~~&=\mathbb{E} \int_{\mathbb{R}^+} \langle \phi, \psi \rangle_K 1_{[0, E_t \wedge E_s]}(r) dr 
		=\mathbb{E}(E_{t\wedge s})\langle \phi, \psi \rangle_K.
	\end{aligned}
\end{eqnarray*}
Therefore, according to the Definition~\ref{definitionoftimechangedcylindricalprocess}, $\tilde W_{E_t}$ is a time-changed cylindrical Wiener process. 
\end{proof}
\noindent Moreover, it follows from Proposition \ref{convergenceofseriesinthedefinitionoftimechangedcylindricalintegral} that the integral with respect to the process $\tilde W_{E_t}$ defined in~\eqref{definitionoftimechangedcylindricalprocessexpression} is well-defined. 

On the other hand, $E_t$ is independent of the martingale measure $M_t(A)$. So, define a time-changed version of $\{M_t(A)\}$ by
\begin{eqnarray}\label{definitionoftimechangedmartingalemeasure}
M_{E_t}(A)= M(A\times [0,E_t]):=F(1_{[0,E_t]}(\cdot)1_{A}(*)).
\end{eqnarray}
By conditioning on the time change, the covariance for $ M_{E_t}(A)$ is
\begin{eqnarray*}
	\begin{aligned}
		\mathbb E (M_{E_t}(A)  &M_{E_t}(B)) = \int_{0}^{\infty}\mathbb E( M_{\tau}(A)M_{\tau}(B))f_{E_t}(\tau)\mathrm{d}\tau\\
		& =  \int_{0}^{\infty}\tau\left(\int_{\mathbb R^N}\int_{\mathbb R^N}1_{A}(x)f(x-y)1_{B}(y)\mathrm{d}y\mathrm{d}x\right)f_{E_t}(\tau)\mathrm{d}\tau\\
		& =  \mathbb E(E_t)\int_{\mathbb R^N}\int_{\mathbb R^N}1_{A}(x)f(x-y)1_{B}(y)\mathrm{d}y\mathrm{d}x,
	\end{aligned}
\end{eqnarray*}
where $f_{E_t}$ is the density function of $E_t$. Also
\begin{eqnarray*}
	\begin{aligned}
		\mathbb E[ M_{E_t}(A)]^2 = \mathbb E(E_t)\int_{\mathbb R^N}\int_{\mathbb R^N}1_{A}(x)f(x-y)1_{A}(y)\mathrm{d}y\mathrm{d}x < \infty
	\end{aligned}
\end{eqnarray*}
for all $A\in\mathcal{B}_b(\mathbb R^N)$. Thus, $M_{E_t}(A)$ has a finite second moment for all $A\in\mathcal{B}_b(\mathbb R^N)$.
Then the following theorem shows that  $\{M_{E_t}(A)\}$ is also a martingale measure.

\begin{thm}\label{timechangedmartingalemeasure}
	$\{M_{E_t}(A)\}_{t\geq 0, A\in\mathcal B(\mathbb R^N)}$ is a martingale measure with respect to the filtration $\{\tilde{\mathcal F}_{E_t}\}_{t\geq 0}$, where $\tilde{\mathcal F}_{t}$ in~\eqref{generalizationoffiltration} is generated by the time change $E_{t}$ and independent Brownian motions $\tilde{W}_{t}(f_{j}), j = 1, 2, \cdots$ defined by \ref{definitionofcylindricalprocess}.
\end{thm}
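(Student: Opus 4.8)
The plan is to verify the three defining properties of a martingale measure listed in Definition~\ref{definitionofmartingalemeasure} for the time-changed process $\{M_{E_t}(A)\}$ introduced in~\eqref{definitionoftimechangedmartingalemeasure}, relative to the filtration $\{\tilde{\mathcal{F}}_{E_t}\}_{t\geq 0}$. Properties (1) and (2) amount to bookkeeping, whereas the martingale property (3) is the substantive step; I would establish it by transporting the martingale property of the untimed measure $M_t(A)$ through the independent time change $E_t$, exactly following the template of Theorem~\ref{martingaleoftimechagedQwiener}.

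For property (1), since $E_0 = 0$ almost surely, the indicator $1_{[0,E_0]}$ vanishes as an element of the integrand space, so $M_{E_0}(A) = F(1_{[0,0]}(\cdot)1_A(*)) = 0$ a.s. For property (2), I would use the linearity of the random field $F$: for disjoint $A,B \in \mathcal{B}(\mathbb{R}^N)$ we have $1_{A\cup B} = 1_A + 1_B$, whence $M_{E_t}(A\cup B) = M_{E_t}(A) + M_{E_t}(B)$, and countable additivity in $L^2(\mathbb{P})$ follows from the isometry property of $F$ together with the finite-second-moment bound $\mathbb{E}[M_{E_t}(A)]^2 = \mathbb{E}(E_t)\int_{\mathbb{R}^N}\int_{\mathbb{R}^N} 1_A(x)f(x-y)1_A(y)\,\mathrm{d}y\,\mathrm{d}x < \infty$ already computed above. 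Sigma-finiteness is inherited from that of $M_t$ by conditioning on $E_t = \tau$.

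The core of the argument is property (3). Fixing $A \in \mathcal{B}(\mathbb{R}^N)$, set $X(\tau) := M_\tau(A)$. Because $\{M_t(A)\}$ is a mean-zero square-integrable martingale with respect to $\{\tilde{\mathcal{F}}_t\}$ and independent of $E_t$, I would reproduce the stopping-time argument of Theorem~\ref{martingaleoftimechagedQwiener}: introduce $T_n = \inf\{\tau>0 : |X(\tau)| \geq n\}$, so that $X(T_n \wedge \tau)$ is a bounded martingale, and apply Doob's Optional Sampling Theorem to get $\mathbb{E}(X(T_n \wedge E_t)\mid \tilde{\mathcal{F}}_{E_s}) = X(T_n \wedge E_s)$ for $s<t$. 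The right-hand side converges to $X(E_s) = M_{E_s}(A)$ as $n\to\infty$. To pass to the limit on the left-hand side I would establish uniform integrability via Doob's maximal inequality and conditioning on the independent time change: $\mathbb{E}(\sup_{0\leq s\leq t}|X(E_s)|^2) = \int_0^\infty \mathbb{E}(\sup_{0\leq r\leq\tau}|X(r)|^2)f_{E_t}(\tau)\,\mathrm{d}\tau \leq 4\int_0^\infty \mathbb{E}|X(\tau)|^2 f_{E_t}(\tau)\,\mathrm{d}\tau = 4\,\mathbb{E}|M_{E_t}(A)|^2 < \infty$. The dominated convergence theorem then gives $\mathbb{E}(X(T_n\wedge E_t)\mid \tilde{\mathcal{F}}_{E_s}) \to \mathbb{E}(M_{E_t}(A)\mid\tilde{\mathcal{F}}_{E_s})$, and combining with optional sampling yields $\mathbb{E}(M_{E_t}(A)\mid\tilde{\mathcal{F}}_{E_s}) = M_{E_s}(A)$.

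I expect the main obstacle to be the justification that $\{M_t(A)\}$ is genuinely a martingale adapted to $\tilde{\mathcal{F}}_t$ and independent of $E_t$ — which must be read off from the construction of $M_t$ through the Gaussian field $F$ and from the fact that $\tilde{\mathcal{F}}_t$ in~\eqref{generalizationoffiltration} is generated precisely by the Brownian motions $\tilde{W}_t(f_j)$ (together with $E_s$) — and, relatedly, the uniform-integrability step that licenses interchanging the limit with the conditional expectation. Once these are in place, the remainder follows verbatim from the proof of Theorem~\ref{martingaleoftimechagedQwiener}.
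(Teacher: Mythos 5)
Your proposal is correct and follows essentially the same route as the paper: verify the three conditions of Definition~\ref{definitionofmartingalemeasure}, with the substantive martingale property obtained by repeating the optional-sampling and uniform-integrability argument of Theorem~\ref{martingaleoftimechagedQwiener} (which the paper simply cites rather than rewriting). The only cosmetic difference is in finite additivity, where you invoke the linearity of $F$ directly while the paper deduces $M_{\tau}(A\cup B)=M_{\tau}(A)+M_{\tau}(B)$ from a Gaussian variance computation before conditioning on the time change; both are valid.
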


\begin{proof} 
It suffices to check the conditions in Definition~\ref{definitionofmartingalemeasure}. First, since $E_0=0$ a.s., $M_{E_0}(A)=M_{0}(A)=0$ a.s. because $M_{t}(A)$ is a martingale measure. 

Second, let $A$, $B\in \mathcal B(\mathbb R^N)$ be disjoint. Then, for fixed $\tau$, $M_\tau(A\cup B)$ and $M_\tau (A)+M_\tau (B)$ are mean zero Gaussian random variables and
	\begin{eqnarray*}
		\begin{aligned}
				\text{Var}(&M_{\tau}(A\cup B))  =  \underset{(A\cup B)\times (A\cup B)}{\int\int}f(x-y)\mathrm{d}y\mathrm{d}x\\
				& =  \underset{A\times A}{\int\int}f(x-y)dydx+\underset{B\times B}{\int\int}f(x-y)dydx+2\underset{A\times B}{\int\int}f(x-y)\mathrm{d}y\mathrm{d}x\\
				& =  \text{Var}(M_{\tau}(A))+\text{Var}(M_{\tau}(B))+2\mathbb E(M_{\tau}(A)M_{\tau}(B))\\
				& = \text{Var}\bigg(M_{\tau}(A)+M_{\tau}(B)\bigg).\\
		\end{aligned}
	\end{eqnarray*}
Also note that
\begin{eqnarray*}
		M_{\tau}(A\cup B)=M_{\tau}(A)+M_{\tau}(B)~\text{a.s.}
\end{eqnarray*}
Thus, conditioning on the time change yields
\begin{eqnarray*}
	\begin{aligned}
			\mathbb{P}&(M_{E_t}(A\cup B) =  M_{E_t}(A)+M_{E_t}(B))\\
			&= \int_{0}^{\infty}\mathbb{P}(M_\tau(A\cup B)=M_{\tau}(A)+M_{\tau}(B))f_{E_t}(\tau)\mathrm{d}\tau\\
			&=  \int_{0}^{\infty}f_{E_t}(\tau)\mathrm{d}\tau=1,
	\end{aligned}
\end{eqnarray*}
which means $M_{E_{t}}(\cdot)$ is additive a.s. Furthermore, assume $A_1\supset A_2\supset...$ such that ${\cap}_n A_n=\emptyset$, 
\begin{eqnarray*}
	\mathbb E[M_{E_t}(A_{n})]^2=\mathbb E(E_{t})\underset{A_n\times A_n}{\int\int}f(x-y)\mathrm{d}y\mathrm{d}x\rightarrow 0
\end{eqnarray*}
as $n\to \infty$, and so $M_{E_t}(A_n)\to 0$ in $L^2(\mathbb{P}).$  This proves the countable additivity of $M_{E_{t}}(\cdot)$.
	
Finally, since $\{M_{E_t}(A)\}$ has a finite second moment, a similar argument to the proof of Theorem~\ref{martingaleoftimechagedQwiener} shows that $\{M_{E_t}(A)\}$ is a martingale for all $A\in\mathcal B(\mathbb R^N)$. Thus, $\{M_{E_t}(A)\}$ is a martingale measure with respect to the filtration $\mathcal{G}_{t} = \tilde{\mathcal{F}}_{E_t}$.	
\end{proof}

Define the dominating measure $K$ by
\begin{eqnarray}\label{dominatingmeasureexpresion}
\begin{aligned}
	&\quad K(A\times B\times C)\\ &:=\mathbb E(\lambda(\{E_s(\omega):s\in C\}))\int_{\mathbb R^N}\int_{\mathbb R^N}1_{A}(x)f(x-y)1_{B}(y)\mathrm{d}y\mathrm{d}x,
\end{aligned}
\end{eqnarray}
where $A,B\in \mathcal B(\mathbb R^N)$ and $\lambda$ is the Lebesgue measure on $C\in \mathcal B(\mathbb R_+)$ . Then, the following theorem shows that the martingale measure $\{M_{E_t}(A)\}$ is worthy.
\begin{thm} 
	The martingale measure $\{M_{E_t}(A)\}_{t\geq 0, A\in\mathcal B(\mathbb R^N)}$ is worthy with respect to the filtration $\{\tilde{\mathcal{F}}_{E_t}\}_{t\geq 0}$, i.e. the same one as defined in Theorem~\ref{timechangedmartingalemeasure}.
\end{thm}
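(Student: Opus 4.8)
The plan is to verify directly the four conditions of Definition~\ref{definitionofworthymartingalemeasure} for the candidate dominating measure $K$ given in~\eqref{dominatingmeasureexpresion}. The key observation is that $K$ factors as a product of a measure in the time variable and a measure in the space variables. First I would note that since the inverse stable subordinator $E_s$ is continuous and non-decreasing with $E_0=0$, the set $\{E_s(\omega):s\in(0,t]\}$ is, by monotonicity and continuity of $E$, an interval of Lebesgue measure $E_t(\omega)$, so that $\lambda(\{E_s(\omega):s\in(0,t]\})=E_t(\omega)$. Using the moment formula $\mathbb{E}(E_t)=t^{\beta}/\Gamma(\beta+1)$, this gives $K(A\times B\times(0,t])=\tfrac{t^{\beta}}{\Gamma(\beta+1)}\int_{\mathbb{R}^N}\int_{\mathbb{R}^N}1_A(x)f(x-y)1_B(y)\,\mathrm{d}y\,\mathrm{d}x$. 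In particular $K$ is deterministic, being the product of the Lebesgue--Stieltjes measure associated with the continuous increasing function $t\mapsto\mathbb{E}(E_t)$ and the spatial covariance measure $\Gamma(A\times B):=\int_{\mathbb{R}^N}\int_{\mathbb{R}^N}1_A(x)f(x-y)1_B(y)\,\mathrm{d}y\,\mathrm{d}x$.

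For condition (1), symmetry of $K$ in $(A,B)$ follows from the evenness of $f$ (a real non-negative definite function satisfies $f(-z)=f(z)$), and nonnegative definiteness follows from that of $f$: for finitely many sets $A_i$ and reals $a_i$, one has $\sum_{i,j}a_ia_j\,\Gamma(A_i\times A_j)=\int\int\big(\sum_i a_i 1_{A_i}(x)\big)f(x-y)\big(\sum_j a_j 1_{A_j}(y)\big)\,\mathrm{d}y\,\mathrm{d}x\geq 0$ since $f$ is non-negative definite, and the time factor $\mathbb{E}(E_t)$ is nonnegative. Condition (2) is immediate because $K(A\times B\times(0,t])$ is a deterministic, continuous function of $t$, hence trivially predictable. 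For condition (3), when $A,B$ are compact the displacement $x-y$ ranges over a bounded set, so the integrability of $f$ near $0$ together with its continuity away from $0$ gives $\Gamma(A\times B)<\infty$; combined with $\mathbb{E}(E_t)=t^{\beta}/\Gamma(\beta+1)<\infty$ this yields $\mathbb{E}(K(A\times B\times(0,t]))=K(A\times B\times(0,t])<\infty$.

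Finally, condition (4) is essentially already in hand: the covariance computation preceding Theorem~\ref{timechangedmartingalemeasure} shows $\mathbb{E}(M_{E_t}(A)M_{E_t}(B))=\mathbb{E}(E_t)\,\Gamma(A\times B)=K(A\times B\times(0,t])$, and since $f\geq 0$ forces $\Gamma(A\times B)\geq 0$, the inequality $|\mathbb{E}(M_{E_t}(A)M_{E_t}(B))|\leq K(A\times B\times(0,t])$ holds with equality, the ``a.s.'' being vacuous as both sides are deterministic. I expect the main point requiring care to be condition (1), where the structural hypothesis that $f$ is non-negative definite is exactly what is needed, together with the identification $\lambda(\{E_s:s\in(0,t]\})=E_t$ that makes the time factor of $K$ agree with $\mathbb{E}(E_t)$; the remaining conditions are routine consequences of the hypotheses on $f$ and the finiteness of the moments of $E_t$ established earlier.
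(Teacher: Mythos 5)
Your proposal is correct and follows essentially the same route as the paper: both verify the four conditions of Definition~\ref{definitionofworthymartingalemeasure} for the same dominating measure $K$ from~\eqref{dominatingmeasureexpresion}, using the non-negativity and non-negative definiteness of $f$ for conditions (1) and (4), the finiteness of $\mathbb{E}(E_t)$ for condition (3), and the covariance computation $\mathbb{E}(M_{E_t}(A)M_{E_t}(B))=\mathbb{E}(E_t)\int\int 1_A(x)f(x-y)1_B(y)\,\mathrm{d}y\,\mathrm{d}x$ for condition (4). Your added remarks -- the identification $\lambda(\{E_s:s\in(0,t]\})=E_t$ via continuity and monotonicity of $E$, and the observation that $K$ is deterministic, hence trivially predictable -- merely make explicit what the paper leaves implicit.
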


\begin{proof} To show that the martingale measure $\{M_{E_t}(A)\}$ is worthy, it suffices to show that the dominating measure $K$ defined in~\eqref{dominatingmeasureexpresion} satisfies the conditions of Definition~\ref{definitionofworthymartingalemeasure}.
\begin{enumerate}
	\item For all $C\in\mathcal B(\mathbb R^{+})$, 
	\begin{eqnarray*}
		\begin{aligned}
			K(A\times B\times C)& =  \mathbb E(\lambda(\{E_s(\omega):s\in C\}))\int_{\mathbb R^N}\int_{\mathbb R^N}1_{A}(x)f(x-y)1_{B}(y)\mathrm{d}y\mathrm{d}x\\
			& =  \mathbb E(\lambda(\{E_s(\omega):s\in C\}))\int_{\mathbb R^N}\int_{\mathbb R^N}1_{B}(x)f(x-y)1_{A}(y)\mathrm{d}y\mathrm{d}x\\
			& =  K(B\times A\times C).
		\end{aligned}
	\end{eqnarray*}
Additionally, for all $A\in\mathcal B(\mathbb R^N)$, $C\in \mathcal B(\mathbb R^{+})$, since $f$ is non-negative definite,
\begin{eqnarray*}
	\begin{aligned}
		K(A\times A\times C) = \mathbb E(\lambda(\{E_s(\omega):s\in C\}))\int_{\mathbb R^N}\int_{\mathbb R^N}1_{A}(x)f(x-y)1_{A}(y)\mathrm{d}y\mathrm{d}x\geq 0.
	\end{aligned}
\end{eqnarray*}
\item For all $A,B\in\mathcal B(\mathbb R^N)$, $t>0$, 
\begin{eqnarray*}
	K(A\times B\times (0,t])=\mathbb E(E_t)\int_{\mathbb R^N}\int_{\mathbb R^N}1_{A}(x)f(x-y)1_{B}(y)\mathrm{d}y\mathrm{d}x
\end{eqnarray*}
is $\tilde{\mathcal F}_{E_t}$-measurable.
\item For all compact sets $A,B\in\mathcal B(\mathbb R^N)$ and $t>0$,
\begin{eqnarray*}
	\mathbb E |K(A\times B\times (0,t])|=\mathbb E(E_t)\int_{\mathbb R^N}\int_{\mathbb R^N}1_{A}(x)f(x-y)1_{B}(y)\mathrm{d}y\mathrm{d}x < \infty.
\end{eqnarray*}
\item For all $A,B\in\mathcal B(\mathbb R^N)$ and $t>0$,
\begin{eqnarray*}
	\begin{aligned}
		|\mathbb E(M_{E_t}(A)M_{E_t}(B))| & =  \mathbb E(E_t)\int_{\mathbb R^N}\int_{\mathbb R^N}1_{A}(x)f(x-y)1_{B}(y)\mathrm{d}y\mathrm{d}x\\
		& =  K(A\times B\times (0,t]).
	\end{aligned}	
\end{eqnarray*}
\end{enumerate}
Thus, the martingale measure $\{M_{E_t}(A)\}$ is worthy. 
\end{proof}

As shown in the non-time-changed case, the stochastic integral with respect to a worthy time-changed martingale measure is also a worthy martingale measure. First, consider elementary processes $g$ of the form
\begin{eqnarray}\label{elementprocessoftimechangecase}
g(s,x,\omega)=1_{(a,b]}(s)1_{A}(x)X(\omega)
\end{eqnarray}
where $0\leq a < b \leq T$, $A \in \mathcal{B}(\mathbb{R}^N)$ and $X$ is both bounded and $\mathcal{G}_a:=\tilde{\mathcal{F}}_{E_a}$-measurable.  
For $t,r\in [0,T]$, let 
\begin{eqnarray*}
M_{E_t\wedge E_r}:=M_{E_{t\wedge r}},
\end{eqnarray*}
and let $M_{E}(\mathrm{d}s,\mathrm{d}x)$ denote integration with respect to the martingale measure in the both $s$ and $x$.
Then, define $g \cdot M_E$ by
\begin{eqnarray*}
	\begin{aligned}
		g\cdot M_{E_t}(B) &:= X(\omega)(M_{E_t \wedge E_b}(A \cap B)- M_{E_t \wedge E_a}(A \cap B))\\
		&~= \int_{0}^{t}\int_{\mathbb R^{N}}g(s,x) M_E(\mathrm{d}s,\mathrm{d}x).
	\end{aligned}
\end{eqnarray*}

As usual, this definition of $g\cdot M_E$ can be extended to finite sums of elementary processes and finally to predictable processes $g$ such that
\begin{eqnarray}\label{predictableprocessfortimechange}
\begin{aligned}
||g||_{\dagger}^2
& :=\mathbb E\left(\int_{0}^{T}\int_{\mathbb R^N}\int_{\mathbb R^N}|g(t,x)|f(x-y)|g(t,y)|\mathrm{d}y\mathrm{d}x\mathrm{d}E_t\right)\\
& < \infty.
\end{aligned}
\end{eqnarray}
Let ${\mathcal P}_{\dagger}$ denote the set of predictable processes $(\omega,t,x) \mapsto g(t,x;\omega)$ such that~\eqref{predictableprocessfortimechange} holds. For $g\in {\mathcal P}_{\dagger}$, $g\cdot M_E$ is a worthy martingale measure and the stochastic integral with respect to $M_E$ is defined by
\begin{eqnarray*}
	\int_0^t \int_A g(s,x)M_E(ds, dx)=: g \cdot M_{E_t}(A).
\end{eqnarray*}
The following theorem connects an integral with respect to a time-changed martingale measure with an integral with respect to a time-changed cylindrical Wiener process.

\begin{thm}\label{connectionbetweentimechangedmartingalemeasureandcylindricalprocesses} Let $M_{E_{t}}$ be the time-changed martingale measure defined in~\eqref{definitionoftimechangedmartingalemeasure} and $\tilde W_{E_t}$ be the time-changed cylindrical process defined in~\eqref{definitionoftimechangedcylindricalprocessexpression}. For $g\in {\mathcal P}_{\dagger}$, then,
\begin{eqnarray*}
   \int_{0}^{T}\int_{\mathbb R^{N}}g(t,x)M_E(\mathrm{d}t, \mathrm{d}x) = \int_0^Tg(t)\mathrm{d}\tilde W_{E_t}. 
\end{eqnarray*}
\end{thm}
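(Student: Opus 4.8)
The plan is to mirror the strategy used for Proposition~\ref{theoremonconnectionofmartingaleandcylindrical} in the non-time-changed setting: first verify the identity for elementary processes, extend it by linearity to simple processes, and finally pass to an arbitrary $g\in\mathcal{P}_\dagger$ by density, exploiting the fact that both integrals are isometries into $L^2(\Omega,\mathcal{G},\mathbb{P})$ carrying the \emph{same} covariance quadratic form. The crucial structural observation is that both the time-changed martingale measure $M_{E_t}$ and the time-changed cylindrical process $\tilde W_{E_t}$ are built from the one underlying random field $F$ via \eqref{definitionoftimechangedmartingalemeasure} and \eqref{definitionoftimechangedcylindricalprocessexpression}, so the two integrals of an elementary process are literally two linear images of the same field.

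First I would take the elementary process $g(s,x,\omega)=1_{(a,b]}(s)1_A(x)X(\omega)$ from \eqref{elementprocessoftimechangecase}. For the left-hand side, the definition of $g\cdot M_E$ together with $b\le T$ (so that $E_T\wedge E_b=E_b$ and $E_T\wedge E_a=E_a$) gives $\int_0^T\int_{\mathbb R^N}g\,M_E(\mathrm{d}s,\mathrm{d}x)=X\bigl(M_{E_b}(A)-M_{E_a}(A)\bigr)=X\,F(1_{(E_a,E_b]}(\cdot)1_A(*))$. For the right-hand side, since $\langle g_s,f_j\rangle_K=1_{(a,b]}(s)X\langle 1_A,f_j\rangle_K$, the defining series for $\int_0^T g\,\mathrm{d}\tilde W_{E_s}$ collapses to $X\sum_{j}\langle 1_A,f_j\rangle_K\bigl(\tilde W_{E_b}(f_j)-\tilde W_{E_a}(f_j)\bigr)=X\sum_{j}\langle 1_A,f_j\rangle_K\,F(1_{(E_a,E_b]}(\cdot)f_j(*))$, where $X$ has been pulled out of the integral on $(a,b]$ by its $\mathcal{G}_a$-measurability. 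Equality of the two expressions then follows by expanding $1_A=\sum_j\langle 1_A,f_j\rangle_K f_j$ in $K$ (legitimate since $\|1_A\|_K^2=\int_A\int_A f(x-y)\,\mathrm{d}y\,\mathrm{d}x<\infty$ for bounded $A$) and interchanging the sum with $F$.

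To reach general $g$, I would extend by linearity to simple processes and then use a density-plus-isometry argument. The right-hand integral satisfies $\mathbb{E}\bigl(\int_0^T g\,\mathrm{d}\tilde W_{E_s}\bigr)^2=\mathbb{E}\int_0^T\|g_s\|_K^2\,\mathrm{d}E_s$, as established within the proof of Proposition~\ref{convergenceofseriesinthedefinitionoftimechangedcylindricalintegral}. The left-hand integral, being taken against the worthy time-changed martingale measure dominated by $K$ in \eqref{dominatingmeasureexpresion}, obeys the analogous Walsh isometry, whose value is $\mathbb{E}\int_0^T\int\int g(s,x)f(x-y)g(s,y)\,\mathrm{d}y\,\mathrm{d}x\,\mathrm{d}E_s$; by the definition of the $K$-inner product \eqref{definitionofHilbertspaceKinconnectionsection} this is again $\mathbb{E}\int_0^T\|g_s\|_K^2\,\mathrm{d}E_s$. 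Thus both sides are continuous linear maps for the common seminorm $\rho(g)=\bigl(\mathbb{E}\int_0^T\|g_s\|_K^2\,\mathrm{d}E_s\bigr)^{1/2}\le\|g\|_\dagger$, and since simple processes are dense in $\mathcal{P}_\dagger$ in the $\|\cdot\|_\dagger$ norm \eqref{predictableprocessfortimechange}, agreement on the dense set propagates to all of $\mathcal{P}_\dagger$.

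The main obstacle is the elementary-process step, specifically the interchange of the infinite summation over $\{f_j\}$ with the action of $F$ when the integration interval $(E_a,E_b]$ has \emph{random} endpoints. I would handle this by conditioning on the path of the time change $E$, which is independent of $F$: for each frozen increasing path, $F$ restricted to functions of the form $1_{(\tau_1,\tau_2]}(\cdot)\varphi(*)$ is a mean-zero Gaussian linear isometry into $L^2$ governed by the covariance \eqref{covarianceofrandomfield}, so $K$-convergence of the partial sums of $1_A$ transfers to $L^2(\Omega)$-convergence of their images under $F$; integrating back over the law of $E$ (using finiteness of the moments of $E_t$ from Theorem~\ref{martingaleoftimechagedQwiener}) then yields the claimed identity. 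Once this interchange is secured, the remaining isometry-and-density extension is routine.
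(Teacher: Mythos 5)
Your proposal is correct and follows essentially the same route as the paper's proof: reduce to elementary processes $1_{(a,b]}(s)1_A(x)X(\omega)$, compute both integrals as $X\,F(1_{(E_a,E_b]}(\cdot)1_A(*))$ using linearity of $F$, and extend by density of simple processes in $\mathcal{P}_\dagger$. The only difference is that you supply more detail than the paper does on two points it treats implicitly — the isometry underlying the density extension and the interchange of the sum over $\{f_j\}$ with $F$ on the random interval $(E_a,E_b]$ — and both of your justifications are sound.
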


\begin{proof} 
	First notice that since $g \in \mathcal P_{\dagger}$,
	\begin{eqnarray*}
		\begin{aligned}
			\mathbb{E}\left(\int_0^T ||g(s)||_K^2 dE_s\right) &= \mathbb{E}\left( \int_0^T \int_{\mathbb{R}^N}\int_{\mathbb{R}^N} g(s,x) f(x-y) g(s,y) \mathrm{d}y\mathrm{d}x\mathrm{d}E_s \right) \\
			&\leq ||g||^2_{\dagger} < \infty,	
		\end{aligned}
	\end{eqnarray*}
which means $g$ satisfies the condition~\eqref{spaceofsquareintegrablefunctionwithrespecttotimchange}. Further, since the set of elementary processes is dense in ${ \mathcal P}_{\dagger}$, it suffices to check that the integrals coincide for elementary processes of the form
\begin{eqnarray*}
	g(s,x,\omega)=1_{(a,b]}(s)1_{A}(x)X(\omega)
\end{eqnarray*}
where $0\leq a<b\leq T$, $A\in\mathcal B(\mathbb R^N)$ and $X(\omega)$ is both bounded and $\tilde{\mathcal{F}}_{E_{a}}$-measurable. For this, note that
\begin{eqnarray*}
	\begin{aligned}
		\int_{0}^{T}\int_{\mathbb R^{N}}g(t,x)M_E(\mathrm{d}t, \mathrm{d}x) & = X(M_{E_T \wedge E_b}(A)-M_{E_T \wedge E_a}(A))\\
		& = X(M_{E_b}(A)-M_{E_a}(A))\\
		& =X(F(1_{[0,E_b]}(\cdot)1_{A}(*))-F(1_{[0,E_a]}(\cdot)1_{A}(*)))\\
		& =X(F(1_{(E_a,E_b]}(\cdot)1_{A}(*))).
	\end{aligned}
\end{eqnarray*}
On the other hand, using the linearity of $F$
\begin{eqnarray*}
	\begin{aligned}
		\int_{0}^{T}g(t)d\tilde W_{E_t} & =  \sum_{j=1}^{\infty} \int_a^b X\langle 1_A, f_j\rangle_{K}d\tilde W_{E_t}(f_j)\\
		& =  X\sum_{j=1}^{\infty}\langle 1_{A},f_j\rangle_{K}(\tilde W_{E_b}(f_j)-\tilde W_{E_a}(f_j))\\
		& =  X\sum_{j=1}^{\infty}\langle 1_A,f_j\rangle_{K}[F(1_{[0,E_b]}(\cdot)f_j)-F(1_{[0,E_a]}(\cdot)f_j)]\\
		& =  X\sum_{j=1}^{\infty}\langle 1_A,f_j\rangle_{K}[F(1_{(E_a,E_b]}(\cdot)f_j)]\\
		& =  X[F(1_{(E_a,E_b]}(\cdot)\sum_{j=1}^{\infty}\langle 1_A,f_j\rangle_{K}f_j]\\
		& =   X[F(1_{(E_a,E_b]}(\cdot)1_{A}(*))].
	\end{aligned}
\end{eqnarray*}
\end{proof}
Next, a connection between the time-changed cylindrical Wiener process and the time-changed $Q$-Wiener process will be established. Define
\begin{eqnarray}\label{definitionoftimechangedQwienerprocessfromoperatorJ}
W_{E_t}:=\sum_{j=1}^{\infty}w_j(E_t)J({f_j})
\end{eqnarray}
where $w_{j}(E_t) := \tilde W_{E_t}(f_j)$ are the time-changed Brownian motions defined  in Proposition~\ref{definitionofQfromoperatorJ}. Also for $g\in L^{2}(\Omega\times [0,T], K)$, define the operator $\Phi^{g}_s$ by
\begin{eqnarray}\label{definitionofPhioftimechanged}
\Phi^{g}_s(\eta):=\langle g(s), \eta \rangle_{K}\text{ }\eta\in K. 
\end{eqnarray}
A predictable process $\phi$ will be integrable with respect to $W_{E_t}$ if
\begin{eqnarray}\label{conditionofintegrabilityoftimechangedQwienerprocess}
\mathbb{E}\left(\int_0^T ||\phi(t)||^2_{\mathcal{L}_2(K_Q, H)}dE_t\right) < \infty.
\end{eqnarray}
The next result provides the connection between the integral with respect to the time-changed cylindrical Wiener process and the time-changed $Q$-Wiener process.

\begin{thm}\label{connectionbetweentimechangedcylindricalintegralandQwienerintegral}
	Let $\tilde W_{E_t}$ be the time-changed cylindrical Wiener process as in ~\eqref{definitionoftimechangedcylindricalprocessexpression} and let $W_{E_t}$ be the time-changed $Q$-Wiener process as in ~\eqref{definitionoftimechangedQwienerprocessfromoperatorJ}. Let $g\in \mathcal P_{\dagger}$. Then, $\Phi_{s}^{g}\circ J^{-1}$ satisfies condition~\eqref{conditionofintegrabilityoftimechangedQwienerprocess} and
\begin{eqnarray*}
	\int_{0}^{T}\Phi_{s}^{g}\circ J^{-1}dW_{E_s} = \int_{0}^{T}g(s)d\tilde W_{E_s}.
\end{eqnarray*}
\end{thm}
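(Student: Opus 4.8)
The plan is to mirror the proof of the non-time-changed analog, Proposition~\ref{theoremonconnectioofcylindricalprocessandQWiener}, replacing the deterministic time variable by the inverse subordinator $E_t$ throughout and substituting the ordinary stochastic integral with the time-changed one from Definition~\ref{timechangstochinte}. The two things that make this transcription legitimate are already in hand: the time-changed It\^o isometry established earlier in Section~\ref{stointegralwithtimechangedQwiener}, and the $L^2$-convergence of the defining series for the time-changed cylindrical integral proved in Proposition~\ref{convergenceofseriesinthedefinitionoftimechangedcylindricalintegral}.

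First I would verify the integrability condition~\eqref{conditionofintegrabilityoftimechangedQwienerprocess}. The Hilbert--Schmidt norm of $\Phi_s^g\circ J^{-1}$ is computed pointwise in $s$ exactly as in Proposition~\ref{theoremonconnectioofcylindricalprocessandQWiener}: using that $J$ is an isometry from $K$ onto $K_Q$ with $J(f_j)=\lambda_j^{1/2}f_j$, so that $J^{-1}(\lambda_j^{1/2}f_j)=f_j$, one gets
\[
\|\Phi_s^g\circ J^{-1}\|^2_{\mathcal L_2(K_Q,\mathbb R)}
=\sum_{j=1}^{\infty}\big[\Phi_s^g(J^{-1}\lambda_j^{1/2}f_j)\big]^2
=\sum_{j=1}^{\infty}\langle g(s),f_j\rangle_K^2=\|g(s)\|_K^2 .
\]
Integrating against $\mathrm dE_s$ and taking expectation, the condition reduces to $\mathbb E\big(\int_0^T\|g(s)\|_K^2\,\mathrm dE_s\big)<\infty$, which is exactly the bound $\mathbb E(\int_0^T\|g(s)\|_K^2\,\mathrm dE_s)\le\|g\|_\dagger^2<\infty$ already derived at the start of the proof of Theorem~\ref{connectionbetweentimechangedmartingalemeasureandcylindricalprocesses} from $g\in\mathcal P_\dagger$. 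This simultaneously shows $g$ satisfies~\eqref{spaceofsquareintegrablefunctionwithrespecttotimchange}, so the right-hand integral $\int_0^T g(s)\,\mathrm d\tilde W_{E_s}$ is well defined by Proposition~\ref{convergenceofseriesinthedefinitionoftimechangedcylindricalintegral}.

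Next I would evaluate the integral. Applying Definition~\ref{timechangstochinte} to the integrand $\Phi_s^g\circ J^{-1}$ gives
\[
\int_0^T\Phi_s^g\circ J^{-1}\,\mathrm dW_{E_s}
=\sum_{j=1}^{\infty}\int_0^T(\Phi_s^g\circ J^{-1})(\lambda_j^{1/2}f_j)\,
\mathrm d\langle W_{E_s},\lambda_j^{1/2}f_j\rangle_{K_Q}.
\]
Here the scalar integrand collapses to $\langle g(s),f_j\rangle_K$ by the same identity as above, while the explicit representation $W_{E_s}=\sum_i w_i(E_s)\lambda_i^{1/2}f_i$ together with the $K_Q$ inner product yields $\langle W_{E_s},\lambda_j^{1/2}f_j\rangle_{K_Q}=w_j(E_s)$. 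Since $w_j(E_s)=\tilde W_{E_s}(f_j)$ by the construction in~\eqref{definitionoftimechangedQwienerprocessfromoperatorJ}, the sum becomes $\sum_j\int_0^T\langle g(s),f_j\rangle_K\,\mathrm d\tilde W_{E_s}(f_j)$, which is precisely the definition of $\int_0^T g(s)\,\mathrm d\tilde W_{E_s}$.

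The main point requiring care is the passage between the two series of scalar time-changed integrals, namely justifying that the term-by-term identification is valid and that both series converge to the same $L^2$ limit. I expect this to be the only genuine obstacle, and it is resolved by Proposition~\ref{convergenceofseriesinthedefinitionoftimechangedcylindricalintegral}: the square-integrability of $\Phi_s^g\circ J^{-1}$ established in the first step guarantees the partial sums form a Cauchy sequence in $\mathcal M_T^2(\mathbb R)$ via the time-changed It\^o isometry, so the interchange of summation and integration is legitimate and the equality holds in $L^2(\Omega,\mathcal G,\mathbb P)$.
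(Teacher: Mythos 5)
Your proposal is correct and follows essentially the same route as the paper's own proof: compute $\|\Phi_s^g\circ J^{-1}\|^2_{\mathcal L_2(K_Q,\mathbb R)}=\|g(s)\|_K^2$ to get condition~\eqref{conditionofintegrabilityoftimechangedQwienerprocess} from $g\in\mathcal P_\dagger$, then unwind Definition~\ref{timechangstochinte} term by term using $\langle W_{E_s},\lambda_j^{1/2}f_j\rangle_{K_Q}=w_j(E_s)=\tilde W_{E_s}(f_j)$. Your explicit appeal to Proposition~\ref{convergenceofseriesinthedefinitionoftimechangedcylindricalintegral} for the $L^2$-convergence of the two series is a small point the paper leaves implicit, but it is the right justification and does not change the argument.
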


\begin{proof}
First, from the proof of Proposition~\ref{theoremonconnectioofcylindricalprocessandQWiener}, $\|\Phi_{s}^{g}\circ J^{-1}\|^2_{\mathcal{L}_2(K_Q,\mathbb{R})} = \|g(s)\|_K^2$. Thus,
\begin{eqnarray*}
	\mathbb{E}\left(\int_0^T ||\Phi_t^g \circ J^{-1}||^2_{\mathcal{L}_2(K_Q, \mathbb{R})}\mathrm{d}E_t\right) = \mathbb{E}\left(\int_0^T ||g(t)||_K^2 \mathrm{d}E_t\right) < \infty,
\end{eqnarray*}
since it was shown in Theorem~\ref{connectionbetweentimechangedmartingalemeasureandcylindricalprocesses} that $g \in \mathcal{P}_{\dagger}$ implies that condition~\eqref{spaceofsquareintegrablefunctionwithrespecttotimchange} holds.  So, $\Phi_t^g \circ J^{-1}$ satisfies the condition~\eqref{conditionofintegrabilityoftimechangedQwienerprocess}. Also from Definition~\ref{timechangstochinte}, 
\begin{eqnarray*}
	\begin{aligned}
	   \int_{0}^{T}\Phi_{s}^{g}\circ J^{-1}dW_{E_s} & =\sum_{j=1}^{\infty}\int_{0}^{T}\Phi_{s}^{g}\circ J^{-1}(\lambda_j^{1/2}f_j)\mathrm{d}\langle W_s, \lambda_j^{1/2}f_j \rangle_{K_Q}\\
	   & = \sum_{j=1}^{\infty}\int_{0}^{T}\langle g(s), f_j \rangle_{K} \mathrm{d}w_{j}(E_s)\\
	  & = \sum_{j=1}^{\infty}\int_{0}^{T}\langle g(s), f_j \rangle_{K} \mathrm{d}\tilde W_{E_s}(f_j)\\
	  & = \int_{0}^{T}g(s)\mathrm{d}\tilde W_{E_s}.
	\end{aligned}
\end{eqnarray*}
\end{proof}
Finally, combining the results of Theorems~\ref{connectionbetweentimechangedmartingalemeasureandcylindricalprocesses} and~\ref{connectionbetweentimechangedcylindricalintegralandQwienerintegral} yields the desired correspondence of integrals with respect to time-changed processes.
\begin{cor}
	For $g \in \mathcal{P}_{\dagger}$ and $\Phi_t^g$ as defined in~\eqref{definitionofPhioftimechanged}, 
\begin{eqnarray*}
	\int_{0}^{T}\int_{\mathbb{R}^N}^{}g(t,x)M_E(dt, dx) = \int_0^Tg(t)\mathrm{d}\tilde W_{E_t} = \int_0^T \Phi_t^g \circ J^{-1} dW_{E_t},
\end{eqnarray*}
where $M_{E_{t}}$ is the time-changed martingale measure as in~\eqref{definitionoftimechangedmartingalemeasure}, $\tilde{W}_{E_{t}}$ is the time-changed cylindrical process as in~\eqref{definitionoftimechangedcylindricalprocessexpression}, $J$ is the operator defined in ~\eqref{definitionofoperatorJ} and $W_{E_t}$ is the time-changed $Q$-Wiener process as in~\eqref{definitionoftimechangedQwienerprocessfromoperatorJ}.
\end{cor}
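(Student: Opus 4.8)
The plan is to obtain the three-way equality purely by transitivity, invoking the two preceding theorems, since each supplies one of the two equalities and the middle integral $\int_0^T g(t)\,\mathrm{d}\tilde W_{E_t}$ appears in both. In other words, the corollary carries no new analytic content; it simply records that the chain of identities already proved closes up.

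First I would observe that the hypothesis $g\in\mathcal P_\dagger$ is exactly the standing assumption required by both Theorem~\ref{connectionbetweentimechangedmartingalemeasureandcylindricalprocesses} and Theorem~\ref{connectionbetweentimechangedcylindricalintegralandQwienerintegral}, so no additional regularity needs to be checked; in particular the integral $\int_0^T\Phi_t^g\circ J^{-1}\,\mathrm{d}W_{E_t}$ is well defined because the integrability of $\Phi_t^g\circ J^{-1}$ against $W_{E_t}$, i.e.\ condition~\eqref{conditionofintegrabilityoftimechangedQwienerprocess}, was verified in the proof of the latter theorem. Next, Theorem~\ref{connectionbetweentimechangedmartingalemeasureandcylindricalprocesses} gives the equality of the time-changed martingale-measure integral with the time-changed cylindrical integral, $\int_0^T\int_{\mathbb R^N}g(t,x)\,M_E(\mathrm{d}t,\mathrm{d}x)=\int_0^T g(t)\,\mathrm{d}\tilde W_{E_t}$, while Theorem~\ref{connectionbetweentimechangedcylindricalintegralandQwienerintegral} gives the equality of that same cylindrical integral with the time-changed $Q$-Wiener integral, $\int_0^T g(t)\,\mathrm{d}\tilde W_{E_t}=\int_0^T\Phi_t^g\circ J^{-1}\,\mathrm{d}W_{E_t}$. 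Chaining these two identities through their common middle term then yields the claimed triple equality.

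There is no genuine obstacle here: the entire analytic work---the isometry computations, the density of elementary processes in $\mathcal P_\dagger$, the convergence of the defining series in Proposition~\ref{convergenceofseriesinthedefinitionoftimechangedcylindricalintegral}, and the careful bookkeeping of the time change $E_t$---was already carried out in establishing the two theorems. The one point worth confirming is notational consistency: both theorems must refer to the identical time-changed cylindrical Wiener process $\tilde W_{E_t}$ of~\eqref{definitionoftimechangedcylindricalprocessexpression} and to the same operator $\Phi_t^g$ of~\eqref{definitionofPhioftimechanged}, which they do, so the two occurrences of the middle integral are literally the same object and transitivity of equality applies without any reconciliation.
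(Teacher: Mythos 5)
Your proposal is correct and matches the paper exactly: the paper obtains this corollary simply by combining Theorems~\ref{connectionbetweentimechangedmartingalemeasureandcylindricalprocesses} and~\ref{connectionbetweentimechangedcylindricalintegralandQwienerintegral} through their common middle term, with no additional argument. Your observation that the hypothesis $g\in\mathcal P_{\dagger}$ is the shared standing assumption and that all analytic content resides in the two cited theorems is precisely the intended reading.
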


\section{Fokker-Planck-Kolmogorov equations associated with the time-changed stochastic differential equations }\label{fpeinhilbertspace}
  
In this section, the Fokker-Plank-Kolmogorov (FPK) equations corresponding to sub-diffusion processes in Hilbert space are introduced. A FPK equation is a deterministic differential equation whose solution is the probability density function for a stochastic process. 
A fundamental example of such an equation in finite dimensions is the heat equation, whose solution is the density for Brownian motion.
These equations are important for several reasons. As in the heat equation, these equations are often helpful in understanding a scientific phenomenon. The connection of these equations to a stochastic process allows one to use information about the stochastic process to study these phenomena. Conversely, knowing FPK equations corresponding to a particular stochastic process is helpful in the simulation of this stochastic process, see~\cite{Hahn2012,Kei2011,Hahn2011139}. More recently, the FPK equations corresponding to diffusion processes in infinite dimensional Hilbert spaces have been analyzed, see~\cite{Bogachev2009,Bogachev2010,Bogachev2011}. 

An introduction to the sub-diffusion processes on Hilbert spaces considered here and their corresponding FPK equations requires a preliminary discussion of diffusion processes on Hilbert space and their FPK equations.
Consider the following classic SDE driven by the $Q$-Wiener process  
\begin{eqnarray}\label{SDEqprocess}
\left\{
\begin{array}{ll}
dY(t) = [AY(t) + F(t, Y(t))]\mathrm{d}t + C\mathrm{d}W_{t} \\
\\
Y(0) = x\in{H}\\
\end{array} 
\right.
\end{eqnarray}
where $A: D(A)\subset H \to H$ is the infinitesimal generator of a $C_{0}$-semigroup $S(t) = e^{tA}$, $t\geq 0$, in $H$, and $W_{t}$ is a $K$-valued $Q$-Wiener process on a complete filtered probability space $(\Omega, \mathcal{F}, \{\mathcal{F}_{t}\}_{t\leq T}, \mathbb{P})$ with the filtration $\mathcal{F}_{t}$ satisfying the usual conditions.  Suppose that $F:\Omega\times[0, T]\times H\rightarrow H$, $C :\Omega\times K\rightarrow H$
and $C\in\Lambda_{2}(K_{Q}, H)$. Also assume the initial value $x$ is an $\mathcal F_{0}$-measurable, $H$-valued random variable. Let $Y(t)$ be a strong solution of the SDE~\eqref{SDEqprocess} so that $Y(t)$ will satisfy the following integral equation: 
\begin{eqnarray*}
	Y(t) = x + \int_{0}^{t}[AY(s) + F(s, Y(s))]ds + CW_{t}.
\end{eqnarray*}
The Kolmogorov operator $L_{0}$ corresponding to the classic SDE~\eqref{SDEqprocess} is
\begin{eqnarray}\label{kolmogorovoperator}
\begin{aligned}
L_{0}\phi(x) &= \langle x, A^{*}D_{x}\phi(x)\rangle_{H} + \langle F(t, x), D_{x}\phi(x)\rangle_{H}\\ 
&+ \frac{1}{2}tr[(CQ^{1/2})(CQ^{1/2})^{*}D^{2}_{x}\phi(x)],
\end{aligned}
\end{eqnarray}
where $x\in H$, $t\in[0, T]$, and $D_{x}$, $D^{2}_{x}$ denote the first- and second-order Fr\'echet derivatives in space, respectively. $D(L_{0})$ denotes the domain of the operator $L_{0}$ and $A^{*}$ denotes the adjoint of the operator $A$. More details on the domain $D(L_{0})$ are given in \cite{Bogachev2009,Bogachev2010,Bogachev2011}. 

Let $\mu(\mathrm{d}t, \mathrm{d}x)$ be a product measure on $[0, T]\times H$ of the type
\begin{eqnarray*}
	\mu(\mathrm{d}t, \mathrm{d}x) = \mu_{t}(\mathrm{d}x)\mathrm{d}t,
\end{eqnarray*}
where $\mu_{t} \in \mathbb{P}(H)$ is a Borel probability measure on the Hilbert space $H$ for all $t\in[0, T]$. 
Let $P^{Y}_{t}$ be the transition evolution operator on $\mathcal{B}_{b}(H)$, the space of bounded, Borel-measurable functions on $H$, defined by
\begin{eqnarray}\label{evoloperatorofY}
P_{t}^{Y}\phi(x) = \mathbb{E}(\phi(Y(t))|Y(0) = x),\text{ }0\leq{t}\leq{T},\text{ }\phi\in{\mathcal{B}_{b}(H)},
\end{eqnarray}
and let $(P^{Y}_{t})^{*}$ be its adjoint operator. Note that $(P^Y_t)$ is a semigroup generated by the Markov process $Y(t)$.
According to \cite{Bogachev2011}, it is possible to define the measure $\mu^{Y}_{t}$ induced by the solution $Y(t)$ as
\begin{eqnarray}\label{probmeasurebyY}
\mu_{t}^{Y}(dy) := (P^{Y}_{t})^{*}\xi(dy),
\end{eqnarray}
where $\xi\in\mathbb{P}(H)$ is the measure associated with the initial value $x$. 
The induced  measure, $\mu_{t}^{Y}(dy)$, is defined as 
\begin{eqnarray}\label{meaningofevoloperatorofY}
\int_{H}\phi(y)\mu_{t}^{Y}(dy) = \int_{H}P^{Y}_{t}\phi(y)\xi(dy),~\textrm{for all}~\phi\in\mathcal{B}_{b}(H).
\end{eqnarray} 
Under the assumption
\begin{eqnarray*}
	\int_{[0, T]\times H}\bigg(|\langle y, A^{*}h\rangle_{H}| + \|F(t, y)\|_{H}\bigg)\mu(\mathrm{d}t, \mathrm{d}y) < \infty,
\end{eqnarray*}
where $h\in D(A^{*})$, the induced measure $\mu_{t}^{Y}$ satisfies  the following FPK  equation 
\begin{eqnarray}\label{intfpeofsde}
\begin{aligned}
\frac{\mathrm{d}}{\mathrm{d}t}\int_{H}\phi(y)\mu^{Y}_{t}(dy) &=\int_{H}L_{0}\phi(y)\mu^{Y}_{t}(dy),~\textrm{for}~dt\textrm{-a.e.,}~t\in[0, T],
\end{aligned}
\end{eqnarray}
where the initial condition is
\begin{eqnarray}\label{initialconditionofclassicfpe}
	\lim_{t\to 0}\int_{H}\phi(y)\mu^{Y}_{t}(\mathrm{d}y) = \int_{H}\phi(y)\xi(dy).
\end{eqnarray}
Further, if the domain of the Kolmogorov operator $L_{0}$ is comprised of test functions, integration by parts yields
\begin{eqnarray}\label{derfpeofsde}
\frac{\partial}{\partial t}\mu^{Y}_{t} = L_{0}^{*}\mu^{Y}_{t}, ~\mu^{Y}_{0} = \xi.
\end{eqnarray}
Further details are given in ~\cite{Bogachev2011}.

The following lemma is needed to extend the FPK equations~\eqref{intfpeofsde} and~\eqref{initialconditionofclassicfpe} or~\eqref{derfpeofsde} associated to the solution of the classic SDE~\eqref{SDEqprocess} to the case of the solution to an SDE driven by a time-changed $Q$-Wiener process.
\begin{lem}\label{laplaceoftimechange}
	Let $U_{\beta}(t)$ be a $\beta$-stable subordinator with the cumulative distribution function $F_{\tau}(t) = \mathbb{P}(U_{\beta}(\tau) \leq t)$ and density function $f_{\tau}(t)$. Suppose the inverse of $U_\beta(t)$ is $E_{t}$ with the density function $f_{E_{t}}(\tau)$. Then, for any integrable function $h(\tau)$ on $(0, \infty)$, the function $q(t)$ defined by the following integral
	\begin{eqnarray*}
		q(t) := \int_0^\infty f_{E_t}(\tau) h(\tau) d\tau
	\end{eqnarray*}
    has Laplace transform
	\begin{eqnarray*}
		\mathcal{L}_{t\to s}\{q(t)\} = s^{\beta-1}[\widetilde{h(\tau)}](s^\beta),
	\end{eqnarray*}
	where $\widetilde{h(\tau)}(s) = \mathcal{L}_{\tau\to s}\{h(\tau)\}$.
\end{lem}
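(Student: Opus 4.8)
The plan is to reduce everything to the single key identity
\[
\int_0^\infty e^{-st} f_{E_t}(\tau)\,\mathrm{d}t = s^{\beta-1}e^{-\tau s^\beta},
\]
which expresses the Laplace transform in $t$ of the density of $E_t$ in terms of the Laplace exponent of the subordinator, and then to interchange the order of the two integrations in $\mathcal{L}_{t\to s}\{q(t)\}$.

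First I would derive this identity from the defining relation \eqref{inversesubordinator}. Because $U_\beta$ is strictly increasing, the hitting-time description gives the duality $\{E_t > \tau\} = \{U_\beta(\tau) \leq t\}$, so the distribution function of $E_t$ as a function of $\tau$ is $G_t(\tau) := \mathbb{P}(E_t \leq \tau) = 1 - F_\tau(t)$. Taking the Laplace transform in $t$ and using Tonelli together with the CDF-to-density rule $\mathcal{L}\{F_\tau\}(s) = s^{-1}\mathcal{L}\{f_\tau\}(s)$ and the subordinator Laplace transform \eqref{laplacetransformofsubordinator}, I obtain
\[
\int_0^\infty e^{-st} G_t(\tau)\,\mathrm{d}t = \frac{1 - e^{-\tau s^\beta}}{s}.
\]
Differentiating in $\tau$ and justifying the exchange of $\partial_\tau$ with the $t$-integral (using $f_{E_t}(\tau) = \partial_\tau G_t(\tau) \geq 0$) then yields the key identity above.

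With the identity in hand, the conclusion follows by Fubini. Writing
\[
\mathcal{L}_{t\to s}\{q(t)\} = \int_0^\infty e^{-st}\!\left(\int_0^\infty f_{E_t}(\tau) h(\tau)\,\mathrm{d}\tau\right)\mathrm{d}t,
\]
the absolute integrability needed to swap the order of integration follows from Tonelli applied to $|h|$: the resulting inner integral is exactly $s^{\beta-1}e^{-\tau s^\beta}$, and $\int_0^\infty |h(\tau)| s^{\beta-1} e^{-\tau s^\beta}\,\mathrm{d}\tau \leq s^{\beta-1}\int_0^\infty |h(\tau)|\,\mathrm{d}\tau < \infty$ since $h$ is integrable and $e^{-\tau s^\beta} \leq 1$. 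Swapping the integrals gives
\[
\mathcal{L}_{t\to s}\{q(t)\} = \int_0^\infty h(\tau)\, s^{\beta-1} e^{-\tau s^\beta}\,\mathrm{d}\tau = s^{\beta-1}\int_0^\infty e^{-s^\beta \tau} h(\tau)\,\mathrm{d}\tau = s^{\beta-1}[\widetilde{h(\tau)}](s^\beta),
\]
which is the claim.

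The main obstacle is the rigorous justification of the two interchanges: differentiating $G_t(\tau)$ in $\tau$ under the $t$-integral, and the Fubini swap in the final step. Both rest on the nonnegativity of the densities (so that Tonelli applies) and on the integrability hypothesis on $h$; the elementary estimate $e^{-\tau s^\beta} \leq 1$ is precisely what converts integrability of $h$ into the dominating bound. I would pay particular attention to the $\tau$-differentiation, where one wants to verify that $G_t(\tau)$ is absolutely continuous in $\tau$ with derivative $f_{E_t}(\tau)$ and that formal differentiation of the explicit right-hand side is legitimate; alternatively, one may simply cite the known expression $s^{\beta-1}e^{-\tau s^\beta}$ for the double Laplace transform of the inverse stable subordinator density to bypass this step entirely.
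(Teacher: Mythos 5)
Your proof is correct and follows essentially the same route as the paper: both reduce the claim to the double Laplace transform identity $\int_0^\infty e^{-st} f_{E_t}(\tau)\,\mathrm{d}t = s^{\beta-1}e^{-\tau s^\beta}$, obtained by Laplace-transforming $\mathbb{P}(E_t\le\tau)=1-\mathbb{P}(U_\beta(\tau)\le t)$ in $t$ and differentiating in $\tau$, and then conclude by interchanging the $t$- and $\tau$-integrations against $h$. The only cosmetic difference is that you invoke the subordinator's Laplace transform $e^{-\tau s^\beta}$ directly, whereas the paper first rewrites $f_{E_t}(\tau)$ via the self-similarity $U_\beta(\tau)\overset{d}{=}\tau^{1/\beta}U_\beta(1)$ and an auxiliary integral operator $J$ before arriving at the same expression.
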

\begin{proof}
	Using the self-similarity property of the $\beta$-stable subordinator $U_{\beta}(t)$, the distribution function~$F_{E_{t}}(\tau)$ associated with the time change $E_{t}$ is
	\begin{eqnarray}\label{distofinvsubordinator}
	\begin{aligned}
	F_{E_t}(\tau)
	&=\mathbb{P}(E_{t} \leq \tau)
	= \mathbb{P}(U_\beta(\tau) > t)
	=1-\mathbb{P}(\tau^{1/\beta}U_\beta(1) \leq t)\\
	&= 1- \mathbb{P}\bigg(U_\beta(1) \leq \frac{t}{\tau^{1/\beta}}\bigg) = 1- F_1\bigg(\frac{t}{\tau^{1/\beta}}\bigg).
	\end{aligned}
	\end{eqnarray}
	Differentiating both sides of~\eqref{distofinvsubordinator},
	\begin{eqnarray*}
	f_{E_t}(\tau)= -\frac{\partial}{\partial\tau} \left\{ \frac{1}{\tau^{1/\beta}}(Jf_1)\bigg(\frac{t}{\tau^{1/\beta}}\bigg)\right\},
	\end{eqnarray*}
	where $J$ is an integral operator defined by 
	\begin{eqnarray}\label{integrateoperatorJ}
		(Jf)\bigg(\frac{t}{a}\bigg) = \int_{0}^{t}f\bigg(\frac{s}{a}\bigg)\mathrm{d}s~\text{for all}~a > 0.
	\end{eqnarray}
	Therefore, the Laplace transform 
	\begin{eqnarray}\label{laplacetransformofintegrealrelatedtotimechange}
		\begin{aligned}
			\mathcal{L}_{t \rightarrow s}\bigg[\frac{1}{a} (Jf_1)\bigg(\frac{t}{a}\bigg)\bigg](s)
			&= \frac{1}{a} \int_0^\infty (Jf_1)\bigg(\frac{t}{a}\bigg) e^{-st} dt = \frac{1}{as} \int_0^\infty f_1\bigg(\frac{t}{a}\bigg) e^{-st} dt\\
			&= \frac{1}{s} \int_0^\infty  f_1(\hat{t}) e^{-as\hat{t}} d\hat{t}= \frac{1}{s} \tilde{f_1}(as),
		\end{aligned}
	\end{eqnarray}
	where $\widetilde{f_{1}}(s)$ is the Laplace transform of the function $f(t)$. Now consider
	\begin{align*}
	q(t) := \int_0^\infty f_{E_t}(\tau) h(\tau) d\tau
	= -\int_0^\infty \frac{\partial}{\partial\tau} \left\{ \frac{1}{\tau^{1/\beta}}(Jf_1)\bigg(\frac{t}{\tau^{1/\beta}}\bigg)\right\} h(\tau) d\tau.
	\end{align*}
   Applying the Laplace transform of the $\beta$-stable subordinator, $U_{\beta}(t)$, in~\eqref{laplacetransformofsubordinator} and using~\eqref{laplacetransformofintegrealrelatedtotimechange} yields
	\begin{align*}
	\mathcal{L}_{t\to s}\{q(t)\} &= -\int_0^\infty \frac{\partial}{\partial\tau}  \left\{ \frac{1}{s} e^{-\tau s^\beta} \right\} h(\tau) d\tau = s^{\beta-1}[\widetilde{h(\tau)}](s^\beta).
	\end{align*}
\end{proof}

Let $W_{E_{t}}$ be a time-changed $Q$-Wiener process on a complete filtered probability space $(\Omega, \mathcal{G}, \{\mathcal{G}_{t}\}_{t\leq T}, \mathbb{P})$ with the filtration $\mathcal{G}_{t} := \tilde{\mathcal{F}}_{E_{t}}$ satisfying the usual conditions. Suppose that $x$ is an $\mathcal F_{0}$-measurable and $H$-valued random variable. Let the coefficients $A, F,$ and $C$ be the same as in the classic SDE~\eqref{SDEqprocess}. Consider the following autonomous SDE on $H$ driven by $W_{E_t}$ and on the time interval $[0, T]$:
\begin{equation}\label{timechangedSDE}
\left\{ \begin{array}{rl}
&\mathrm{d}X(t) = (AX(t) + F(X(t)))\mathrm{d}E_{t} + C\mathrm{d}W_{E_{t}},\\

&\\

& X(0) = x \in H, ~~t\geq 0.
\end{array} \right.
\end{equation}
We derive the time-fractional FPK equation associated with the time-changed SDE~\eqref{timechangedSDE} via two different methods: first by applying the time-changed It\^o formula and second by using the duality in Theorem~\ref{Duality}. The advantage of the first approach is that it directly reveals the connection between the time-fractional FPK equations in Theorems~\ref{intfpeoftimeSDE} and~\ref{difffracfpe}, and the time-changed SDE~\eqref{timechangedSDE}. The advantage of the second approach is that it reveals the connection between the time-fractional FPK equations in Theorems~\ref{intfpeoftimeSDE} and~\ref{difffracfpe}, and the classic FPK equations in~\eqref{intfpeofsde} and~\eqref{initialconditionofclassicfpe} or~\eqref{derfpeofsde}.

\begin{thm}\label{intfpeoftimeSDE}
	Suppose the coefficients $A,F,$ and $C$ of the time-changed SDE \eqref{timechangedSDE} satisfy the conditions in Theorem~\ref{sthm1}. Let $X(t)$ be the solution to~\eqref{timechangedSDE}. Also suppose that $X(U_{\beta}(t))$ is independent of $E_{t}$. Then the probability kernel $\mu_{t}^{X}(\mathrm{d}x)$ induced by the solution $X(t)$ satisfies the following fractional integral equation 
	\begin{eqnarray}\label{fracintfpeoftimeSDE}
	D_{t}^{\beta}\int_{H}\phi(x)\mu^{X}_{t}(\mathrm{d}x) = \int_{H}L_{0}\phi(x)\mu^{X}_{t}(\mathrm{d}x),
	\end{eqnarray}
	with initial condition $\mu_{0}^{X}(\mathrm{d}x) = \xi(\mathrm{d}x)$, where $\phi\in D(L_{0})$, $L_{0}$ is the Kolmogorov operator defined in~\eqref{kolmogorovoperator} and $D_{t}^{\beta}$ denotes the Caputo fractional derivative operator as defined in Theorem~\ref{sub-diffusionoftimechangedQwienerprocess}.
\end{thm}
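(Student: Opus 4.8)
The plan is to derive \eqref{fracintfpeoftimeSDE} by the time-changed It\^o formula, producing an identity involving a $\mathrm{d}E_s$-integral, and then to convert that integral into the Caputo derivative $D_t^\beta$ by means of the subordination structure of $E_t$ and the Laplace-transform identity of Lemma~\ref{laplaceoftimechange}. First I would apply the time-changed It\^o formula to $\phi(X(t))$, where $X$ solves \eqref{timechangedSDE}; in the notation of that formula the drift against $\mathrm{d}s$ is absent, the drift against $\mathrm{d}E_s$ is $AX(s)+F(X(s))$, and the diffusion is the constant $C$. Using $\langle AX(s),D_x\phi(X(s))\rangle_H=\langle X(s),A^{*}D_x\phi(X(s))\rangle_H$ and cyclicity of the trace, the combined $\mathrm{d}E_s$-integrand is exactly the Kolmogorov operator $L_0\phi(X(s))$ of \eqref{kolmogorovoperator}, so that
\begin{equation*}
\phi(X(t))=\phi(X(0))+\int_0^t L_0\phi(X(s))\,\mathrm{d}E_s+\int_0^t\langle D_x\phi(X(s)),C\,\mathrm{d}W_{E_s}\rangle_H .
\end{equation*}
Taking expectations, the last integral is a mean-zero $\mathcal{G}_t$-martingale by the time-changed It\^o isometry (here $\phi\in D(L_0)$ and $C\in\tilde{\Lambda}_2(K_Q,H)$ place the integrand in $\tilde{\Lambda}_2(K_Q,\mathbb{R})$), so it drops out. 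Writing $M(t):=\int_H\phi\,\mathrm{d}\mu_t^X$ and using $\mu_0^X=\xi$ gives the identity $M(t)-M(0)=R(t)$, where $R(t):=\mathbb{E}\int_0^t L_0\phi(X(s))\,\mathrm{d}E_s$.

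Next I would put $R(t)$ and $N(t):=\int_H L_0\phi\,\mathrm{d}\mu_t^X$ into subordinated form. By the construction of $X$ as a time change (Theorem~\ref{Duality}), $X(s)=Y(E_s)$ with $Y$ the solution of the corresponding classic SDE; the hypothesis that $X(U_\beta(t))$ is independent of $E_t$ amounts to $Y$ being independent of the time change. The pathwise change of variables $\int_0^t h(E_s)\,\mathrm{d}E_s=\int_0^{E_t}h(u)\,\mathrm{d}u$ applied to $h(u)=L_0\phi(Y(u))$, followed by conditioning on $E_t=\tau$ and using independence, yields
\begin{equation*}
R(t)=\int_0^\infty\Big(\int_0^\tau g(u)\,\mathrm{d}u\Big)f_{E_t}(\tau)\,\mathrm{d}\tau,\qquad N(t)=\int_0^\infty g(\tau)\,f_{E_t}(\tau)\,\mathrm{d}\tau,
\end{equation*}
where $g(\tau):=\mathbb{E}[L_0\phi(Y(\tau))]$ and $f_{E_t}$ is the density of $E_t$.

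To finish, I would take Laplace transforms in $t$ and apply Lemma~\ref{laplaceoftimechange}, which gives $\mathcal{L}\{R\}(s)=s^{\beta-1}\tilde g(s^\beta)/s^\beta=s^{-1}\tilde g(s^\beta)$ and $\mathcal{L}\{N\}(s)=s^{\beta-1}\tilde g(s^\beta)$, so that $\mathcal{L}\{N\}(s)=s^\beta\mathcal{L}\{R\}(s)$. Since $R=M-M(0)$, one has $\mathcal{L}\{R\}(s)=\tilde M(s)-M(0)/s$, whence
\begin{equation*}
\mathcal{L}\{N\}(s)=s^\beta\tilde M(s)-s^{\beta-1}M(0)=\mathcal{L}\{D_t^\beta M\}(s),
\end{equation*}
using the Caputo Laplace rule for $0<\beta<1$. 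By uniqueness of the Laplace transform this gives $D_t^\beta M(t)=N(t)$, i.e.\ \eqref{fracintfpeoftimeSDE} with $\mu_0^X=\xi$; notably this route reaches the result directly from the SDE and does not invoke the classical FPK equation \eqref{intfpeofsde}.

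The step I expect to be most delicate is the passage to the subordinated representations of $R$ and $N$: the change of variables must be handled for the continuous, nondecreasing but non-strictly-increasing $E_t$ (its intervals of constancy contribute nothing to $\mathrm{d}E_s$), and the interchange of expectation with the $\mathrm{d}E_s$-integral and with the conditioning on $E_t$ must be justified by Fubini together with the growth and square-integrability bounds from Theorem~\ref{sthm1}. Once these representations are in place the Laplace computation is routine, though it presupposes finiteness of $\tilde g(s^\beta)$ and hence the integrability hypotheses underlying Lemma~\ref{laplaceoftimechange}, and the final inversion requires $M$ and $N$ to be continuous of at most exponential growth.
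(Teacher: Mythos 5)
Your proposal is correct and follows essentially the same route as the paper's first proof: apply the time-changed It\^o formula so the $\mathrm{d}E_s$-drift assembles into $L_0\phi$, kill the stochastic integral by the martingale property, pass to the subordinated representations of $\int_H\phi\,\mathrm{d}\mu_t^X$ and $\int_H L_0\phi\,\mathrm{d}\mu_t^X$ via $X(t)=Y(E_t)$ and independence, and close with Lemma~\ref{laplaceoftimechange} and the Caputo Laplace rule. The paper even handles the delicate point you flag (non-strict monotonicity of $E_t$) the same way, by noting $X$ is constant on $[U_{r-},U_r]$ so that $X(U_{r-})=Y(r)$.
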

\begin{proof}(\textbf{Method via the time-changed It\^o formula})
Let $Y(t) := X(U_{t})$.
Since $E_{t}$ and $W_{E_{t}}$ are both constant on $[U_{t-}, U_{t}]$, the integrals 
\begin{eqnarray*}
	\int_{0}^{t}(AX(s) + F(X(s)))\mathrm{d}E_{s}~~\text{and}~~\int_{0}^{t}B(X(s))\mathrm{d}W_{E_{s}}
\end{eqnarray*}
are also constant on $[U_{t-}, U_{t}]$. Therefore, since $X(t)$ is the solution to the SDE ~\eqref{timechangedSDE}, $X(t)$ is also 
constant on $[U_{t-}, U_{t}]$ and satisfies $Y(E_{t}) = X(U_{E_{t}}) = X(t)$. So, for a fixed time $t\in [0, T]$, let $\mu^{X}_{t}$ and $\mu^{Y}_{t}$ denote the probability measures induced on $H$ by the stochastic processes $X(t)$ and $Y(t)$, respectively. Thus, for $\phi\in \mathrm{D}(L_{0})$,
	\begin{eqnarray}\label{expectionoffuncionofsoluon}
	\mathbb{E}(\phi(X(t))) = \int_{H}\phi(x)\mu^{X}_{t}(\mathrm{d}x).
	\end{eqnarray}
	Since $X(t) = Y(E_{t})$, taking the expectation of $X(t)$ conditioned on $E_{t}$,  
	\begin{eqnarray*}
	\begin{aligned}
	\mathbb{E}(\phi(X(t))) &= \mathbb{E}(\phi(Y(E_{t})))  = \int_{0}^{\infty}\mathbb{E}(\phi(Y_{\tau})|E_{t}=\tau)f_{E_{t}}(\tau)\mathrm{d}\tau,
	\end{aligned}
	\end{eqnarray*}
	where $f_{E_{t}}(\tau)$ is the density function of $E_{t}$. By the assumption that $Y(t) = X(U_{t})$ is independent of $E_{t}$, 
	\begin{equation}\label{expressionofexpectation}
\begin{aligned}	
\mathbb{E}(\phi(X(t))) & = \int_{0}^{\infty}\int_{H}\phi(x)\mu_{\tau}^{Y}(\mathrm{d}x)f_{E_{t}}(\tau)\mathrm{d}\tau\\
	& = \int_{H}\phi(x)\int_{0}^{\infty}\mu_{\tau}^{Y}(\mathrm{d}x)f_{E_{t}}(\tau)\mathrm{d}\tau.
\end{aligned}	
\end{equation}
	Since $\phi\in \mathrm{D}(L_{0})$ is arbitrary, combining~\eqref{expectionoffuncionofsoluon} and~\eqref{expressionofexpectation} yields
	\begin{eqnarray}\label{connectionofXmeasYmeas}
	\mu^{X}_{t}(\mathrm{d}x) = \int_{0}^{\infty}\mu_{\tau}^{Y}(\mathrm{d}x)f_{E_{t}}(\tau)\mathrm{d}\tau.
	\end{eqnarray}
Since $X(t)$ is constant on every interval $[U_{r-}, U_{r}]$, $X(U_{r-}) = X(U_{r}) = Y(r)$. Thus, by the time-changed It\^o formula,   
	\begin{eqnarray}\label{usingtimeitoformula}
	\begin{aligned}
	\phi(X(t)) - \phi(x_{0}) &= \int_{0}^{E_{t}}L_{0}\phi(X(U_{r-}))\mathrm{d}r + \int_{0}^{E_{t}}\langle\phi_{x}(X(U_{r-})), C\mathrm{d}W_{r}\rangle\\
	&= \int_{0}^{E_{t}}L_{0}\phi(Y(r))\mathrm{d}r + \int_{0}^{E_{t}}\langle\phi_{x}(Y(r)), C\mathrm{d}W_{r}\rangle.
	\end{aligned}
	\end{eqnarray}
	Since $\phi\in \mathrm{D}(L_{0})$, the integral 
	\begin{eqnarray*}
		M(\tau) = \int_{0}^{\tau}\langle \phi_{x}(Y(r)), C\mathrm{d}W_{r}\rangle_{H}
	\end{eqnarray*}
	is a square integrable $\mathcal{F}_{\tau}$-martingale. Taking expectations on both sides of~\eqref{usingtimeitoformula} and conditioning on $E_{t}$ gives 
	\begin{eqnarray}\label{expressionofcondtionexpectation}
	\begin{aligned}
	\mathbb{E}[\phi(X(t))&|X(0) = x_{0}] - \phi(x_{0})\\
	&= \int_{0}^{\infty}\mathbb{E}[\int_{0}^{\tau}L_{0}\phi(Y(r))\mathrm{d}r + M(\tau)|E_{t} = \tau, X(0) = x_{0}]f_{E_{t}}(\tau)\mathrm{d}\tau\\
	&=\int_{0}^{\infty}\int_{0}^{\tau}\mathbb{E}[L_{0}\phi(Y(r))| X(0) = x_{0}]\mathrm{d}r f_{E_{t}}(\tau)\mathrm{d}\tau\\
	&=\int_{0}^{\infty}\int_{0}^{\tau}\int_{H}L_{0}\phi(x)\mu^{Y}_{r}(\mathrm{d}x)\mathrm{d}r f_{E_{t}}(\tau)\mathrm{d}\tau\\
	&=\int_{0}^{\infty}\bigg(JP^{Y}(\tau)\bigg)f_{E_{t}}(\tau)\mathrm{d}\tau,
	\end{aligned}
	\end{eqnarray}
	where $J$ is the integral operator as defined in~\eqref{integrateoperatorJ} and $P^{Y}(r)$ is defined by
	\begin{eqnarray*}
		P^{Y}(r) = \int_{H}L_{0}\phi(x)\mu_{r}^{Y}(\mathrm{d}x).
	\end{eqnarray*}
 On the other hand, 
	\begin{eqnarray}\label{lefhandsideexpectation}
	\begin{aligned}
	\mathbb{E}[\phi(X(t))|X(0) = x_{0}] &- \mathbb{E}[\phi(x_{0})|X(0) = x_{0}]\\ &=\int_{H}\phi(x)\mu^{X}_{t}(\mathrm{d}x) - \int_{H}\phi(x)\xi(\mathrm{d}x),\\
	\end{aligned}
	\end{eqnarray}
	which also implies the initial condition $\mu_{0}^{X}(\mathrm{d}x) = \xi(\mathrm{d}x)$. 
Combining ~\eqref{expressionofcondtionexpectation}, ~\eqref{lefhandsideexpectation}, Lemma~\ref{laplaceoftimechange} yields
	\begin{eqnarray*}
		\begin{aligned}
			\mathcal{L}_{t\to s}\bigg\{\int_{H}\phi(x)\mu^{X}_{t}(\mathrm{d}x)\bigg\} &- \frac{1}{s}\int_{H}\phi(x)\xi(\mathrm{d}x)\\ &= s^{\beta - 1}[\widetilde{J_{t}P^{Y}(\tau)}](s^{\beta}) = \frac{s^{\beta - 1}}{s^{\beta}}[\widetilde{P^{Y}(\tau)}](s^{\beta}),\\
		\end{aligned}
	\end{eqnarray*}
	which, in turn, implies 
	\begin{eqnarray}\label{laplaceofexpectwithmeasure}
	s^{\beta}\mathcal{L}_{t\to s}\bigg\{\int_{H}\phi(x)\mu^{X}_{t}(\mathrm{d}x)\bigg\} - s^{\beta - 1}\int_{H}\phi(x)\xi(\mathrm{d}x) = s^{\beta - 1}[\widetilde{P^{Y}(\tau)}](s^{\beta}).
	\end{eqnarray}
	Combining ~\eqref{connectionofXmeasYmeas} and Fubini's Theorem yields 
	\begin{eqnarray}\label{integralwithrespecttoXYmeasures}
	\begin{aligned}
	    \int_{H}L_{0}\phi(x)\mu^{X}_{t}(\mathrm{d}x) &= \int_{H}L_{0}\phi(x)\int_{0}^{\infty}\mu_{\tau}^{Y}(\mathrm{d}x)f_{E_{t}}(\tau)\mathrm{d}\tau\\
	    &=\int_{0}^{\infty}\int_{H}L_{0}\phi(x)\mu_{\tau}^{Y}(\mathrm{d}x)f_{E_{t}}(\tau)\mathrm{d}\tau\\
	    &=\int_{0}^{\infty}P^{Y}(\tau)f_{E_{t}}(\tau)\mathrm{d}\tau.
	\end{aligned}
	\end{eqnarray}
	Taking Laplace transforms of both sides of~\eqref{integralwithrespecttoXYmeasures} gives
	\begin{eqnarray}\label{laplaceofintegralwithmeasure}
	\begin{aligned}
	\mathcal{L}_{t\to s}\bigg\{\int_{H}L_{0}\phi(x)\mu^{X}_{t}(\mathrm{d}x)\bigg\} 
	&= \mathcal{L}_{t\to s}\bigg\{\int_{0}^{\infty}P^{Y}(\tau)f_{E_{t}}(\tau)\mathrm{d}\tau\bigg\}\\ 
	&= s^{\beta - 1}[\widetilde{P^{Y}(\tau)}](s^{\beta}).
	\end{aligned}
	\end{eqnarray}
	Recall that the following equality holds:
	\begin{eqnarray}\label{laplacetransformofcaputoderivative}
		\mathcal{L}_{t\to s}\big\{D_{t}^{\beta}f(t)\big\} = s^{\beta}\mathcal{L}_{t\to s}\big\{f(t)\big\} - s^{\beta - 1}f(0),
	\end{eqnarray}
where $f(t)$ is a real-valued function on $t \geq 0$. Therefore, combining~\eqref{laplaceofexpectwithmeasure} and~\eqref{laplaceofintegralwithmeasure} yields
	\begin{eqnarray*}
		s^{\beta}\mathcal{L}_{t\to s}\bigg\{\int_{H}\phi(x)\mu^{X}_{t}(\mathrm{d}x)\bigg\} - s^{\beta - 1}\int_{H}\phi(x)\xi(\mathrm{d}x) = \mathcal{L}_{t\to s}\bigg\{\int_{H}L_{0}\phi(x)\mu^{X}_{t}(\mathrm{d}x)\bigg\},
	\end{eqnarray*}
	which, together with~\eqref{laplacetransformofcaputoderivative}, implies that
	\begin{eqnarray*}
		D_{t}^{\beta}\int_{H}\phi(x)\mu^{X}_{t}(\mathrm{d}x) = \int_{H}L_{0}\phi(x)\mu^{X}_{t}(\mathrm{d}x).
	\end{eqnarray*}
\end{proof}
The next theorem gives the familiar differential form of the FPK equation for the solution to the time-changed SDE  ~\eqref{timechangedSDE}.
\begin{thm}\label{difffracfpe}
	Suppose the conditions in Theorem~\ref{intfpeoftimeSDE} hold. If the domain of the operator $L_{0}$ defined in~\eqref{kolmogorovoperator} is a set of test functions, then the  probability measure $\mu_{t}^{X}$ induced by the solution $X(t)$ satisfies the following time-fractional PDE 
	\begin{eqnarray}\label{difffracfpeoftimesde}
		D_{t}^{\beta}\mu_{t}^{X} = L_{0}^{*}\mu_{t}^{X},
	\end{eqnarray}
	with initial condition $\mu_{0}^{X}(\mathrm{d}x) = \xi(\mathrm{d}x)$, where $L^{*}_{0}$ is the adjoint of the operator $L_{0}$ and $D_{t}^{\beta}$ denotes the Caputo fractional derivative operator.
\end{thm}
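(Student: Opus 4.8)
The plan is to obtain the differential form~\eqref{difffracfpeoftimesde} from the weak (integral) form already proved in Theorem~\ref{intfpeoftimeSDE}, by exactly the duality/integration-by-parts argument that carries the classic integral equation~\eqref{intfpeofsde} over to its differential counterpart~\eqref{derfpeofsde}. Theorem~\ref{intfpeoftimeSDE} supplies, for every $\phi\in D(L_0)$,
\begin{eqnarray*}
D_t^\beta \int_H \phi(x)\mu_t^X(\mathrm{d}x) = \int_H L_0\phi(x)\mu_t^X(\mathrm{d}x),
\end{eqnarray*}
together with $\mu_0^X(\mathrm{d}x)=\xi(\mathrm{d}x)$. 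I would read both sides as dual pairings against the test function $\phi$ and then strip $\phi$ off, using the hypothesis that $D(L_0)$ consists of test functions to make sense of $L_0^*$ acting on measures.

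First I would fix the meaning of the left-hand side: for a fixed $\phi$ the Caputo operator acts only in the time variable on the scalar function $t\mapsto\int_H\phi\,\mathrm{d}\mu_t^X$, so I define the measure-valued object $D_t^\beta\mu_t^X$ weakly by $\langle D_t^\beta\mu_t^X,\phi\rangle := D_t^\beta\langle\mu_t^X,\phi\rangle$, where $\langle\nu,\phi\rangle:=\int_H\phi\,\mathrm{d}\nu$; no interchange of $D_t^\beta$ with the spatial integral beyond this definition is required. Next I would rewrite the right-hand side using the adjoint: since $D(L_0)$ is a set of test functions, integration by parts in $H$ is legitimate and $L_0^*$ is well defined on measures through $\langle L_0^*\mu_t^X,\phi\rangle = \int_H L_0\phi(x)\mu_t^X(\mathrm{d}x)$. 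Combining the two identities gives $\langle D_t^\beta\mu_t^X,\phi\rangle=\langle L_0^*\mu_t^X,\phi\rangle$ for every test function $\phi$, and since the test functions form a separating class I would conclude the pointwise-in-time equality of measures $D_t^\beta\mu_t^X = L_0^*\mu_t^X$; the initial condition $\mu_0^X=\xi$ is inherited directly from Theorem~\ref{intfpeoftimeSDE}.

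The hard part will be the rigorous justification that $L_0^*$ is genuinely well defined on the relevant space of measures on the infinite-dimensional space $H$. The Kolmogorov operator~\eqref{kolmogorovoperator} contains the unbounded term $\langle x, A^*D_x\phi(x)\rangle_H$ as well as the trace term, so the integration by parts that is supposed to define $L_0^*\mu_t^X$ as a bona fide measure (rather than a merely formal expression) requires both the restriction of $\phi$ to test functions and suitable integrability of $\mu_t^X$ against $\langle x,A^*h\rangle_H$ and $\|F\|_H$, precisely as in the diffusion setting of~\cite{Bogachev2009,Bogachev2010,Bogachev2011}. Under the stated hypothesis that $D(L_0)$ is a set of test functions, this is imported from the classic case, so the only genuinely new content is the replacement of the ordinary time derivative in~\eqref{derfpeofsde} by the Caputo operator $D_t^\beta$, which the weak identity of Theorem~\ref{intfpeoftimeSDE} already delivers.
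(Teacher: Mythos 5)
Your argument is correct, and it reaches \eqref{difffracfpeoftimesde} by a genuinely more economical route than the paper. You take the already-established weak identity \eqref{fracintfpeoftimeSDE} as your sole input, define $D_t^{\beta}\mu_t^{X}$ and $L_0^{*}\mu_t^{X}$ by duality against test functions, and strip off $\phi$ using the fact that test functions separate measures; the only analytic content is the well-definedness of $L_0^{*}$ on measures, which you correctly isolate and import from the Bogachev--Da Prato--R\"ockner setting. The paper does not argue this way: it reaches back into the \emph{proof} of Theorem~\ref{intfpeoftimeSDE} for the subordination representations
\begin{eqnarray*}
\mu^{X}_{t}(\mathrm{d}x) - \xi(\mathrm{d}x) = \int_{0}^{\infty}\int_{0}^{\tau}L^{*}_{0}\mu^{Y}_{r}(\mathrm{d}x)\,\mathrm{d}r\, f_{E_{t}}(\tau)\,\mathrm{d}\tau,
\qquad
\mu^{X}_{t}(\mathrm{d}x) = \int_{0}^{\infty}\mu_{\tau}^{Y}(\mathrm{d}x) f_{E_{t}}(\tau)\,\mathrm{d}\tau,
\end{eqnarray*}
obtained after integrating by parts inside the time integral, and then applies Lemma~\ref{laplaceoftimechange} and the Laplace transform of the Caputo derivative at the level of measure-valued functions to conclude. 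What the paper's route buys is a concrete representation of $\mu_t^{X}-\xi$ as a genuine (Bochner-type) integral of $L_0^{*}\mu_r^{Y}$, which substantiates that the measure-valued Caputo derivative actually exists rather than being defined only weakly through its pairings; what your route buys is brevity and independence from the internal structure of the earlier proof. The one point you should make explicit if you adopt your version is that the weakly defined object $D_t^{\beta}\mu_t^{X}$ is identified with the measure $L_0^{*}\mu_t^{X}$ precisely because the latter is a bona fide measure and the pairing identity holds for a separating class; as you note, this is exactly where the hypothesis that $D(L_0)$ consists of test functions enters.
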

\begin{proof}
	The proof of Theorem~\ref{intfpeoftimeSDE} gives 
	\begin{eqnarray*}
		\int_{H}\phi(x)\mu^{X}_{t}(\mathrm{d}x) - \int_{H}\phi(x)\xi(\mathrm{d}x) = \int_{0}^{\infty}\int_{0}^{\tau}\int_{H}L_{0}\phi(x)\mu^{Y}_{r}(\mathrm{d}x)\mathrm{d}r f_{E_{t}}(\tau)\mathrm{d}\tau. 
	\end{eqnarray*}
	Since $\phi\in D(L_{0})$ is a test function, applying the integration by parts operator yields
	\begin{eqnarray*}
		\int_{H}\phi(x)\mu^{X}_{t}(\mathrm{d}x) - \int_{H}\phi(x)\xi(\mathrm{d}x) = \int_{H}\phi(x)\int_{0}^{\infty}\int_{0}^{\tau}L^{*}_{0}\mu^{Y}_{r}(\mathrm{d}x)\mathrm{d}r f_{E_{t}}(\tau)\mathrm{d}\tau,
	\end{eqnarray*}
	which means 
	\begin{eqnarray}\label{expressionofXmeasureininveolutionofY}
	\mu^{X}_{t}(\mathrm{d}x) - \xi(\mathrm{d}x) = \int_{0}^{\infty}\int_{0}^{\tau}L^{*}_{0}\mu^{Y}_{r}(\mathrm{d}x)\mathrm{d}r f_{E_{t}}(\tau)\mathrm{d}\tau.
	\end{eqnarray}	
Additional information on the adjoint operator, $L_{0}^{*}$, is given in ~\cite{Elworthy1974}. 

The proof of Theorem~\ref{intfpeoftimeSDE} also gives
	\begin{eqnarray}\label{expressionofXmeasureinY}
	\mu^{X}_{t}(\mathrm{d}x) = \int_{0}^{\infty}\mu_{\tau}^{Y}(\mathrm{d}x)f_{E_{t}}(\tau)\mathrm{d}\tau.
	\end{eqnarray}
	Taking the Laplace transforms in ~\eqref{expressionofXmeasureininveolutionofY} and~\eqref{expressionofXmeasureinY}           	gives 
	\begin{eqnarray*}
		s^{\beta}\mathcal{L}_{t\to s}\bigg\{\mu^{X}_{t}(\mathrm{d}x)\bigg\} - s^{\beta - 1}\xi(\mathrm{d}x) = \mathcal{L}_{t\to s}\bigg\{L_{0}^{*}\mu_{t}^{X}(\mathrm{d}x)\bigg\},
	\end{eqnarray*}
	which implies
	\begin{eqnarray*}
		D_{t}^{\beta}\mu_{t}^{X} = L_{0}^{*}\mu_{t}^{X},
	\end{eqnarray*}
	with initial condition $\mu_{0}^{X}(\mathrm{d}x) = \xi(\mathrm{d}x)$. 
\end{proof}

We now use the second approach based on duality to derive the FPK equation for the solution to the time-changed SDEs~\eqref{timechangedSDE}.

\begin{thm}\label{fpeofduality}
	Under the assumptions of Theorem ~\ref{intfpeoftimeSDE}, the time-fractional FPK equation associated with the time-changed SDE~\eqref{timechangedSDE} follows from ~\eqref{fracintfpeoftimeSDE}. Also if the domain of the operator $L_{0}$ defined in~\eqref{kolmogorovoperator} is a set of test functions, then the time-fractional FPK equation has the form~\eqref{difffracfpeoftimesde}. 
\end{thm}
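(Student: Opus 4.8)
The plan is to bypass the time-changed It\^o formula altogether and instead transport the \emph{classical} FPK equation~\eqref{intfpeofsde} for the diffusion $Y(t)$ onto the sub-diffusion $X(t)$, using only the duality $X(t)=Y(E_t)$ of Theorem~\ref{Duality} together with the subordination machinery of Lemma~\ref{laplaceoftimechange}. Since the SDE~\eqref{timechangedSDE} is the autonomous, time-changed version of the classical SDE~\eqref{SDEqprocess} with diffusion coefficient $C$, Theorem~\ref{Duality} identifies its solution as $X(t)=Y(E_t)$, where $Y$ solves~\eqref{SDEqprocess}; the induced measure $\mu_t^Y$ therefore obeys the established classical FPK equation~\eqref{intfpeofsde} with initial condition~\eqref{initialconditionofclassicfpe}. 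The entire task is then to propagate this known identity through the time-change, so that the unit-order time derivative of the classical equation is converted into the Caputo derivative of order $\beta$.

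First I would reuse the measure relation~\eqref{connectionofXmeasYmeas} already obtained in the proof of Theorem~\ref{intfpeoftimeSDE}, namely $\mu_t^X(\mathrm{d}x)=\int_0^\infty \mu_\tau^Y(\mathrm{d}x)\,f_{E_t}(\tau)\,\mathrm{d}\tau$, which comes from conditioning on $E_t$ and the independence of $Y=X(U_\beta(\cdot))$ from $E_t$. Writing $u(t):=\int_H\phi(x)\,\mu_t^Y(\mathrm{d}x)$, the classical FPK equation~\eqref{intfpeofsde} becomes $u'(t)=\int_H L_0\phi(x)\,\mu_t^Y(\mathrm{d}x)$ with $u(0)=\int_H\phi\,\mathrm{d}\xi$ from~\eqref{initialconditionofclassicfpe}. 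Integrating~\eqref{connectionofXmeasYmeas} against $\phi$ and against $L_0\phi$ yields $\int_H\phi\,\mathrm{d}\mu_t^X=\int_0^\infty u(\tau)f_{E_t}(\tau)\,\mathrm{d}\tau$ and $\int_H L_0\phi\,\mathrm{d}\mu_t^X=\int_0^\infty u'(\tau)f_{E_t}(\tau)\,\mathrm{d}\tau$.

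The decisive step is the Laplace comparison. Applying Lemma~\ref{laplaceoftimechange} once with $h=u$ and once with $h=u'$ gives $\mathcal{L}_{t\to s}\{\int_H\phi\,\mathrm{d}\mu_t^X\}=s^{\beta-1}\tilde u(s^\beta)$ and $\mathcal{L}_{t\to s}\{\int_H L_0\phi\,\mathrm{d}\mu_t^X\}=s^{\beta-1}\widetilde{u'}(s^\beta)$. Inserting the elementary rule $\widetilde{u'}(s^\beta)=s^\beta\tilde u(s^\beta)-u(0)$ and noting $u(0)=\int_H\phi\,\mathrm{d}\xi=\mu_0^X$-average of $\phi$, the right side becomes $s^{2\beta-1}\tilde u(s^\beta)-s^{\beta-1}u(0)$, which is precisely $\mathcal{L}_{t\to s}\{D_t^\beta \int_H\phi\,\mathrm{d}\mu_t^X\}$ according to the Caputo transform rule~\eqref{laplacetransformofcaputoderivative}. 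Inverting the Laplace transform then delivers~\eqref{fracintfpeoftimeSDE} exactly. When the domain of $L_0$ consists of test functions, I would finally transfer $L_0$ onto the measure by integration by parts, $\int_H L_0\phi\,\mathrm{d}\mu_t^X=\int_H\phi\,L_0^*\mu_t^X$, upgrading~\eqref{fracintfpeoftimeSDE} to the distributional form $D_t^\beta\mu_t^X=L_0^*\mu_t^X$ of~\eqref{difffracfpeoftimesde}, with $\mu_0^X=\xi$.

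The main obstacle is structural rather than computational: one must verify that the first-order derivative in~\eqref{intfpeofsde} is turned into the order-$\beta$ Caputo derivative, and this hinges entirely on the factor $s^{\beta-1}$ produced by Lemma~\ref{laplaceoftimechange}, inherited from the subordinator Laplace transform~\eqref{laplacetransformofsubordinator}, matching the symbol $s^{\beta}\,\mathcal{L}\{\,\cdot\,\}-s^{\beta-1}(\cdot)(0)$ of~\eqref{laplacetransformofcaputoderivative}. The delicate analytic point is justifying the interchange of the Laplace transform with the spatial integral and with the $\tau$-integration in~\eqref{connectionofXmeasYmeas}; this requires the standing integrability hypotheses of Theorem~\ref{intfpeoftimeSDE}, in particular the finiteness of all moments of $E_t$ and the growth bound on $F$ and $C$, so that Fubini--Tonelli applies. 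Once these interchanges are secured, the conclusion follows by the algebraic matching displayed above together with uniqueness of the Laplace inverse.
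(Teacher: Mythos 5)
Your proposal is correct and follows essentially the same route as the paper's own proof: both use the duality $X(t)=Y(E_t)$, the subordination identity $\mu_t^X=\int_0^\infty \mu_\tau^Y f_{E_t}(\tau)\,\mathrm{d}\tau$, and Lemma~\ref{laplaceoftimechange} to match the factor $s^{\beta-1}$ against the Caputo symbol $s^\beta\mathcal{L}\{\cdot\}-s^{\beta-1}(\cdot)(0)$, then invert the Laplace transform (and integrate by parts for the distributional form). The only cosmetic difference is that you apply Lemma~\ref{laplaceoftimechange} directly to $u'$, whereas the paper substitutes $s\mapsto s^\beta$ in the Laplace-transformed classical FPK equation; these are the same algebraic manipulation.
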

\begin{proof}(\textbf{Via duality}) 
	From the duality theorem, Theorem~\ref{Duality}, the solution of the time-changed SDE~\eqref{timechangedSDE} is 
	\begin{eqnarray}\label{daualityofXandY}
	X(t) = Y(E_{t}),
	\end{eqnarray}
	where $Y(t)$ is the solution to the classic SDE~\eqref{SDEqprocess}.
	As in ~\eqref{evoloperatorofY} and \eqref{probmeasurebyY}, define the transition evolution operator, $P^{X}_{t}$, induced by the solution, $X(t)$, as follows:
	\begin{eqnarray}\label{evoloperatorofX}
	\begin{aligned}
	P_{t}^{X}\phi(x) &= E(\phi(X(t))|X(0) = x_{0}) = E(\phi(Y(E_{t}))|X(0) = x_{0})\\ &= \int_{0}^{\infty}P_{\tau}^{Y}\phi(x)f_{E_{t}}(\tau)\mathrm{d}\tau,~~~~~~\text{ }0\leq{t}\leq{T},\text{ }\phi\in{\mathcal{B}_{b}(H)}.
	\end{aligned}
	\end{eqnarray}
	The probability measure, $\mu_{t}^{X}(\mathrm{d}x)$, induced by $X(t)$ is
	\begin{eqnarray}\label{probmeasurebyX}
	\mu_{t}^{X}(\mathrm{d}x) := (P^{X}_{t})^{*}\xi(\mathrm{d}x),
	\end{eqnarray}
	which means for all~$\phi\in\mathcal{B}_{b}(H)$,
	\begin{eqnarray}\label{meaningofevoloperatorofX}
	\begin{aligned}
	\int_{H}\phi(x)\mu_{t}^{X}(dx) := \int_{H}P^{X}_{t}\phi(x)\xi(dx) &= \int_{H}\int_{0}^{\infty}P^{Y}_{\tau}\phi(x)\xi(\mathrm{d}x)f_{E_{t}}(\tau)d\tau.
	\end{aligned}
	\end{eqnarray} 
	Therefore, the connection between the probability measures, $\mu_{t}^{X}(\mathrm{d}x)$ and $\mu_{t}^{Y}(\mathrm{d}y)$, is obtained from~\eqref{meaningofevoloperatorofX} by applying Fubini's theorem
	\begin{eqnarray}\label{intrelationshipofXandY}
	\begin{aligned}
	\int_{H}\phi(x)\mu_{t}^{X}(dx) = \int_{0}^{\infty}\int_{H}\phi(x)\mu_{\tau}^{Y}(\mathrm{d}x)f_{E_{t}}(\tau)d\tau, ~~\textrm{for all}~\phi\in\mathcal{B}_{b}(H),
	\end{aligned}
	\end{eqnarray}
i.e., 
\begin{eqnarray}\label{relationshipofXandY}
   \begin{aligned}
	 \mu_{t}^{X}(dx) = \int_{0}^{\infty}\mu_{\tau}^{Y}(\mathrm{d}x)f_{E_{t}}(\tau)d\tau, ~~\textrm{for all}~\phi\in\mathcal{B}_{b}(H).
   \end{aligned}
\end{eqnarray}
Appealing to Lemma~\ref{laplaceoftimechange}, and taking Laplace transforms of both sides of~\eqref{intrelationshipofXandY} leads to
	\begin{eqnarray}\label{laplaceofinteofmeasures}
	\mathcal{L}_{t\to s}\bigg\{\int_{H}\phi(x)\mu_{t}^{X}(dx)\bigg\} = s^{\beta - 1}\mathcal{L}_{\tau\to s}\bigg\{\int_{H}\phi(x)\mu_{\tau}^{Y}(dx)\bigg\}(s^{\beta}).
	\end{eqnarray}
	On the other hand, taking Laplace transforms on both sides of~\eqref{intfpeofsde} leads to
	\begin{eqnarray}\label{laplaceofclassicfpe}
	s\mathcal{L}_{t\to s}\bigg\{\int_{H}\phi(y)\mu_{t}^{Y}(dy)\bigg\} - \int_{H}\phi(y)\xi(\mathrm{d}y) = \mathcal{L}_{t\to s}\bigg\{\int_{H}L_{0}\phi(y)\mu^{Y}_{t}(dy)\bigg\}. 
	\end{eqnarray}
	Replacing $s$ by $s^{\beta}$ in~\eqref{laplaceofclassicfpe} yields
	\begin{eqnarray}\label{laplaceofclassicfpe1}
	\begin{aligned}
	s^{\beta}\mathcal{L}_{t\to s}\bigg\{\int_{H}\phi(y)\mu_{t}^{Y}(dy)\bigg\}(s^{\beta}) &- \int_{H}\phi(y)\xi(\mathrm{d}y)\\ &= \mathcal{L}_{t\to s}\bigg\{\int_{H}L_{0}\phi(y)\mu^{Y}_{t}(dy)\bigg\}(s^{\beta}). 
	\end{aligned}
	\end{eqnarray}
	Thus, combining~\eqref{laplaceofinteofmeasures} and~\eqref{laplaceofclassicfpe1} gives
	\begin{eqnarray*}
		\begin{aligned}
			s^{\beta}\mathcal{L}_{t\to s}\bigg\{\int_{H}\phi(x)\mu_{t}^{X}(dx)\bigg\} &- s^{\beta - 1}\int_{H}\phi(y)\xi(\mathrm{d}y)\\ 
			&= s^{\beta - 1}\mathcal{L}_{t\to s}\bigg\{\int_{H}L_{0}\phi(y)\mu^{Y}_{t}(dy)\bigg\}(s^{\beta}), 
		\end{aligned}
	\end{eqnarray*}
	which implies 
	\begin{eqnarray}\label{intfpeforduality}
	\begin{aligned}
	D^{\beta}_{t}\int_{H}\phi(x)\mu_{t}^{X}(dx) &= \int_{0}^{\infty}\int_{H}L_{0}\phi(y)\mu^{Y}_{\tau}(dy)f_{E_{t}}(\tau)\mathrm{d}\tau\\
	&=\int_{H}L_{0}\phi(y)\int_{0}^{\infty}\mu^{Y}_{\tau}(dy)f_{E_{t}}(\tau)\mathrm{d}\tau\\
	&=\int_{H}L_{0}\phi(y)\mu^{X}_{t}(dy),
	\end{aligned}
	\end{eqnarray}
	as required. Further, if the domain of the Kolmogorov operator, $L_{0}$, is comprised of test functions, taking Laplace transforms on both sides of~\eqref{derfpeofsde} yields 
	\begin{eqnarray}\label{laplacefpeoftestfunction}
		s\mathcal{L}_{t\to s}\bigg\{\mu_{t}^{Y}\bigg\} - \mu_{0}^{Y} = \mathcal{L}_{t\to s}\bigg\{L^{*}_{0}\mu_{t}^{Y}\bigg\},
	\end{eqnarray}
	while taking Laplace transforms of both sides of~\eqref{relationshipofXandY} and applying Lemma~\ref{laplaceoftimechange} yields
	\begin{eqnarray}\label{laplaceofmeasures}
	\mathcal{L}_{t\to s}\bigg\{\mu_{t}^{X}\bigg\} = s^{\beta - 1}\mathcal{L}_{\tau\to s}\bigg\{\mu_{\tau}^{Y}\bigg\}(s^{\beta}).
	\end{eqnarray}
	Finally, combining~\eqref{laplacefpeoftestfunction} and~\eqref{laplaceofmeasures} yields
	\begin{eqnarray*}
		\begin{aligned}
			D^{\beta}_{t}\mu_{t}^{X} &=L^{*}_{0}\mu^{X}_{t}.
		\end{aligned}
	\end{eqnarray*}
\end{proof}

\subsection*{Acknowledgment}
The authors would like to thank our advisor, Dr. Marjorie Hahn, for her advice, fruitful discussions, and patience with this work. Also, the authors would like to thank Dr. Daniel Conus, Dr. Kei Kobyashi, and Dr. Lee Stanley for their helpful discussions.

\end{document}